\documentclass{amsart}

\usepackage[letterpaper,hmargin=1.5in,vmargin=0.8in]{geometry}

%% packages

\usepackage{mathbbol}
\usepackage{thmtools}
\usepackage{thm-restate}
\usepackage{mathtools}
\usepackage{amsmath}
\usepackage{amsthm}
\usepackage{amssymb}
\usepackage{tikz}
\usetikzlibrary{matrix,arrows,cd,calc}
\usepackage[hidelinks]{hyperref}
\usepackage{xcolor}
\definecolor{darkgreen}{rgb}{0,0.45,0}
\hypersetup{colorlinks,urlcolor=blue,citecolor=darkgreen,linkcolor=darkgreen,linktocpage}
\usepackage[capitalize]{cleveref}
\crefname{algocf}{Algorithm}{Algorithms}
\usepackage{xspace}
\usepackage{subcaption}
\captionsetup[subfigure]{labelfont=rm}
\usepackage[all,color]{xy}
\usepackage{enumitem}
\usepackage{comment}
\usepackage[noend,ruled,linesnumbered]{algorithm2e}
\usepackage{algpseudocode}
\usepackage{afterpage}

% fix bug with amsart:
\makeatletter
\def\l@paragraph{\@tocline{4}{0pt}{1pc}{7pc}{}}
\def\l@subparagraph{\@tocline{5}{0pt}{1pc}{7pc}{}}
\makeatother

\setcounter{tocdepth}{3}

% make paragraph title look better
\makeatletter
\def\paragraph{\@startsection{paragraph}{4}%
  \z@\z@{-\fontdimen2\font}%
  {\normalfont\bfseries}}
\makeatother
%

%% Custom environments
\newtheorem{theorem}{Theorem}[section]
\newtheorem{lemma}[theorem]{Lemma}

\newtheorem{proposition}[theorem]{Proposition}

\newtheorem*{corollary*}{Corollary}

\newtheorem{theoremx}{Theorem}

\newtheorem{corollaryx}[theoremx]{Corollary}

\theoremstyle{definition}
\newtheorem{definition}[theorem]{Definition}
\newtheorem{example}[theorem]{Example}

%% notes
\def\noteson{%
\gdef\luis##1{\noindent{\color{blue}[Luis: ##1]}}
\gdef\riju##1{\noindent{\color{violet}[Riju: ##1]}}
\gdef\thomas##1{\noindent{\color{brown}[Thomas: ##1]}}
\gdef\todo##1{\noindent{\color{red}[todo: ##1]}}}

\noteson
%%

%% notation

% slightly longer arrows
\renewcommand{\to}{\xrightarrow{\;\;\;}}

% greek letters
\renewcommand{\epsilon}{\varepsilon}
\renewcommand{\phi}{\varphi}

% operations (mostly using MathOperator)

\DeclareMathOperator{\height}{\mathsf{height}}
\newcommand{\uf}{\mathsf{uf}}
\newcommand{\adduf}{\mathsf{add}}
\newcommand{\find}{\mathsf{find}}
\newcommand{\union}{\mathsf{union}}
\newcommand{\representatives}{\mathsf{representatives}}

\DeclareMathOperator{\Hom}{Hom}

\DeclareMathOperator{\End}{End}
\DeclareMathOperator{\add}{add}

\newcommand{\rep}{\mathbf{rep}}

\newcommand{\suc}{\mathsf{succ}}
\newcommand{\pred}{\mathsf{pred}}

\newcommand{\id}{\mathsf{id}}

% Algorithm operations

% categories (using mathrm)
\newcommand{\quiv}{\mathbf{quiv}}

\newcommand{\rtree}{\mathbf{rtree}}
\newcommand{\rtrees}{\mathbf{rtrees}}
\newcommand{\vect}{\mathbf{vec}}

\renewcommand{\top}{\mathbf{top}}
\newcommand{\free}{\mathsf{free}}
\newcommand{\disc}{\mathsf{disc}}

\newcommand{\set}{\mathbf{set}}

\newcommand{\merge}{\Gcal}
\newcommand{\fib}{\mathsf{fib}}

% distances

% abbreviations

%% define different versions of all letters. Example usage: \(\Abb, \Rscr, \Psf\).
%%% Lower case calligraphic letters
\DeclareMathAlphabet{\mathpzc}{OT1}{pzc}{m}{it}
%%% nice scr font
%%% CMcal is already defined and deprecated, forget about it

\usepackage[mathscr]{euscript}
%%% Bbb is already defined and deprecated, forget about it

\makeatletter
\newcommand\DEFINEALPHABETLOOP[3]{%
  \ifx\relax#3\expandafter\@gobble\else\expandafter\@firstofone\fi
  {\expandafter\newcommand\expandafter*\csname#3#1\endcsname{#2{#3}}%
   \DEFINEALPHABETLOOP{#1}{#2}}%
}%
\newcommand\Definealphabet[2]{%
  \DEFINEALPHABETLOOP{#1}{#2}abcdefghijklmnopqrstuvwxyzABCDEFGHIJKLMNOPQRSTUVWXYZ\relax
}%
\makeatother
\Definealphabet{bb}{\mathbb}
\Definealphabet{cal}{\mathcal}
\Definealphabet{bf}{\mathbf}
\Definealphabet{sf}{\mathsf}
%\Definealphabet{rm}{\mathrm}
\Definealphabet{frak}{\mathfrak}
\Definealphabet{scr}{\mathscr}

\title[Decomposing zero-dimensional homology over rooted tree quivers]{Decomposing zero-dimensional persistent homology over rooted tree quivers}

\author{Riju Bindua}
\address{Indian Institute of Technology Delhi; New Delhi, India}
\author{Thomas Brüstle}
\address{Bishop's University; Université de Sherbrooke; Québec, Canada}
\author{Luis Scoccola}
\address{Centre de Recherches Mathématiques et Institut des sciences mathématiques;
Laboratoire de combinatoire et d'informatique mathématique de l'Université du Québec à Montréal;
Université de Sherbrooke; Québec, Canada}

\begin{document}

\maketitle

\begin{abstract}
Given a functor from any category into the category of topological spaces, one obtains a linear representation of the category by post-composing the given functor with a homology functor with field coefficients.
This construction is fundamental in persistence theory, where it is known as persistent homology, and where the category is typically a poset.
Persistence theory is particularly successful when the poset is a finite linearly ordered set, owing to the fact that in this case its category of representations is of finite type.
We show that when the poset is a rooted tree poset (a poset with a maximum and whose Hasse diagram is a tree) the additive closure of the category of representations obtainable as zero-dimensional persistent homology is of finite type, and give a quadratic-time algorithm for decomposition into indecomposables.
In doing this, we give an algebraic characterization of the additive closure in terms of Ringel's tree modules, and show that its indecomposable objects are the reduced representations of~Kinser.
\end{abstract}

\section{Introduction}

\subsection{Context}
Let $P$ be a rooted tree poset.
We study the category $\rep_{H_0}(P)$ of those linear representations of $P$ that are in the essential image of the zero-dimensional persistent homology functor $H_0 : \top^P \to \rep(P)$, where $\top$ is the category of topological spaces with finitely many path-connected components (equivalently, we study representations of $P$ that come from linearizing functors $P \to \set$).

We now give context and briefly discuss our results; we detail our contributions in the next section.
For some of the main notions involved in this paper, see \cref{table:notions}.

\subsubsection*{Poset representations in applied topology}
Geometric data can be studied using tools from algebraic topology, notably homology \cite{ghrist}.
Given a geometric dataset, various constructions from topological data analysis can be used to build a nested family of topological spaces $K_1 \subseteq \cdots \subseteq K_n$, called a \emph{filtration}, which encodes the topology of the dataset.
By applying homology with field coefficients to this sequence, one obtains a representation of a quiver of type $A_n$, called the \emph{persistent homology} of the filtration.
This representation can be effectively decomposed into indecomposable summands, thanks in part to the fact that $A_n$ has finite representation type.
Each of the indecomposable summands can be interpreted as a topological feature of the data, enabling the incorporation of topological information in statistical and machine learning applications \cite{hensel-moor-rieck,chazal-michel}.
More recently, this collection of ideas, now known as persistence theory \cite{oudot}, has also found theoretical applications in geometry and analysis \cite{shelukhin,polterovich,buhovsky}.

If $P$ is any partially ordered set, a \emph{$P$-filtration} is a collection of topological spaces $\{K_p\}_{p \in P}$ such that $K_p \subseteq K_q$ whenever $p \leq q \in P$.
For example, the filtrations in the previous paragraph are $P$-filtrations for $P = \{1 < \dots < n\}$.
The persistent homology of a $P$-filtration is a representation of the indexing poset $P$ (that is a functor $P \to \vect$), and we denote the category of such representations by $\rep(P)$.

Applications of topology to time-dependent, noisy, or otherwise parametrized data, motivate developing a persistence theory for filtrations indexed by non-linear posets \cite{botnan-lesnick-2}.
The first issue encountered is that non-linear posets are typically not of finite representation type \cite{loupias}, and usually of wild representation type.
This means that a classification of the indecomposable objects of $\rep(P)$ is generally infeasible, which in turn implies that
%doing anything with the indecomposable decomposition of a certain representation, such as interpreting it or 
directly using the indecomposable decomposition of a certain representation as a statistic is hard or impossible.

\subsubsection*{Zero-dimensional persistent homology}
In several applications, notably clustering \cite{chazal-guibas-oudot-skraba,cai-kim-memoli-wang,rolle-scoccola}, the most relevant homological degree is zero.
Since the zero-dimensional homology of a topological space is conceptually and computationally simpler than higher dimensional homology, it seems plausible that the category $\rep_{H_0}(P)$ of representations of $P$ that can be obtained as the zero-dimensional persistent homology $H_0(X)$ of a $P$-filtration $X$ is simpler than the full category of representations $\rep(P)$.
Note that, for any higher homological degree $i > 0$, it is known that the analogous category $\rep_{H_i}(P)$ is equivalent to the whole category of representations $\rep(P)$ (see \cite[Section~4]{BBP}), when $P$ is a finite poset and homology has prime-field coefficients.

With this motivation, the authors of \cite{bauer-et-al} consider the image of zero-dimensional persistent homology for $P = \{1 < \cdots < m\} \times \{1 < \cdots < n\}$, a Cartesian product of two finite linear orders.
Since common filtrations in topological data analysis have the property that the horizontal morphisms of their zero-dimensional persistent homology are epimorphisms,
they focus on the subcategory $\rep^{e,\ast}(P) \subseteq \rep(P)$ of representations with the property that the horizontal structure morphisms are epimorphic.
Their main conclusion \cite[Corollary~1.6]{bauer-et-al} says that, except when $P$ belongs to a finite set of small grids, the category $\rep^{e,\ast}(P)$ is still of wild type.
Nevertheless, they observe that $\rep^{e,\ast}(P) \not\subseteq \rep_{H_0}(P)$ in general, and ask whether more can be said about representations in the image of $H_0$.
In this work, we conduct such a study in the case where $P$ is a rooted tree poset (see \cref{figure:rooted-tree,section:tree-quivers-tree-posets}).

We show in particular that for a rooted tree poset $P$, the category $\add(\rep_{H_0}(P))$ is of finite type (\cref{corollary:finite-representation-type}).
This is interesting for two reasons: even for a rooted tree poset~$P$, the category $\rep(P)$ is usually of wild type (e.g., when $P$ is a six-element poset with five incomparable elements all smaller than the sixth one), and $\add(\rep_{H_0}(P))$ can be of infinite type when $P$ is the opposite of a rooted tree poset (e.g., when $P$ is a five-element poset, with four incomparable elements all larger than the fifth element \cite[Example~5.13]{BBP}).

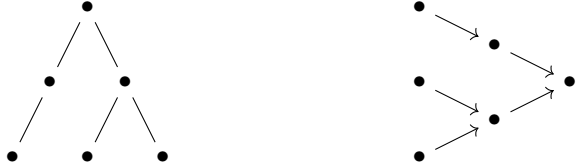
\begin{figure}
    \[
        \begin{tikzpicture}
            \begin{scope}[yshift=-2cm]
                \node (A1) at (1,0) {$\bullet$};
                \node (B1) at (0,0) {$\bullet$};
                \node (B2) at (-1,0) {$\bullet$};
                \node (C1) at (0.5,1) {$\bullet$};
                \node (C2) at (-0.5,1) {$\bullet$};
                \node (D1) at (0,2) {$\bullet$};

                \draw[-] (A1)--(C1);
                \draw[-] (B1)--(C1);
                \draw[-] (B2)--(C2);
                \draw[-] (C1)--(D1);
                \draw[-] (C2)--(D1);
            \end{scope}
        \end{tikzpicture}
        \hspace{3cm}
        \begin{tikzpicture}
            \begin{scope}[yshift=-2cm]
                \node (A1) at (0,1) {$\bullet$};
                \node (B1) at (0,0) {$\bullet$};
                \node (B2) at (0,-1) {$\bullet$};
                \node (C1) at (1,0.5) {$\bullet$};
                \node (C2) at (1,-0.5) {$\bullet$};
                \node (D1) at (2,0) {$\bullet$};

                \draw[->] (A1)--(C1);
                \draw[->] (B1)--(C2);
                \draw[->] (B2)--(C2);
                \draw[->] (C1)--(D1);
                \draw[->] (C2)--(D1);
            \end{scope}
        \end{tikzpicture}
    \]
    \caption{
        A rooted tree can be seen both as a poset and as a quiver.
        \emph{Left.}~The Hasse diagram of a rooted tree poset $P$.
        \emph{Right.}~The corresponding rooted tree quiver $Q_P$.
        The categories of representations are equivalent $\rep(P) \simeq \rep(Q_P)$
        (\cref{section:poset-quiver-representations}).
    }
    \label{figure:rooted-tree}
\end{figure}

\subsubsection*{Ringel's tree modules and Kinser's reduced representations}
Representations of a rooted tree poset can equivalently be seen as representations of a \emph{rooted tree quiver} $Q$ (see \cref{figure:rooted-tree,section:poset-quiver-representations}).
In this paper, we take the point of view of quivers rather than that of posets, since this has some technical advantages, and since it connects more readily with work of Ringel \cite{Ri} and of Kinser \cite{K10}, which we now present.

A representation of a quiver $Q$ is a \emph{tree module} \cite{Ri} if there exists a basis for the representation such that the coefficient quiver of the representation with respect to this basis (\cref{definition:coefficient-quiver}) is a tree quiver.
This implies, for example, that tree modules admit a basis for which all of their structure morphisms have only $0$ and $1$ in their matrix representation.

For $Q$ a rooted tree quiver, \cref{Theorem A}(2) says that representations in $\rep_{H_0}(Q)$ are direct sums of tree modules, and in fact of \emph{rooted tree modules} (i.e., representations admitting a basis for which the coefficient quiver is a rooted tree quiver).
In order to prove that $\add(\rep_{H_0}(Q))$ is of finite type, we show in \cref{Theorem B} that any rooted tree module over a rooted tree quiver decomposes as a direct sum of \emph{reduced rooted tree modules} (\cref{definition:reduced-rooted-tree-module}), which are certain representations of rooted tree quivers defined inductively, and introduced by Kinser \cite{K10} for the study of the representation ring of rooted tree quivers.

\subsubsection*{The elder rule for rooted tree modules over rooted tree quivers}
The decomposition algorithms we introduce in \cref{thm::algorithm} make use of a generalization of the \emph{elder rule}, a concept from persistence theory \cite[pp.~188]{edelsbrunner-harer}, key in efficiently decomposing zero-dimensional persistent homology over linearly ordered sets, and revisited in different contexts such as \cite[Theorem~3.10]{curry}, \cite[Theorem~4.4]{cai-kim-memoli-wang}, and \cite[Lemma~117]{rolle-scoccola}.

The version of the elder rule relevant in our setup is our \cref{proposition:split-off-summand}, which makes use
of a partial preorder on the collection of rooted tree modules over a rooted tree quiver (\cref{definition:order-on-trees}), which is a slight generalization of an order introduced by Kinser \cite{K10} for different purposes.
See also \cite[Lemma~1]{KM} for another, related, algebraic incarnation of the elder rule.

\subsection{Contributions}

Recall that any finite quiver $Q$ has an associated path category (\cref{definition:path-category}), which we also denote by $Q$,
and that there is a canonical equivalence of categories between $\rep(Q)$, the category of finite dimensional representations of $Q$ over some field $\kbb$, and $\vect^Q$, the category of functors from the path category of $Q$ to the category of finite dimensional $\kbb$-vector spaces.
Post-composition with $H_0 : \top \to \vect$ gives the \emph{zero-dimensional persistent homology} functor $H_0 : \top^Q \to \rep(Q)$.
Let $\rep_{H_0}(Q) \subseteq \rep(Q)$ denote the essential image of the zero-dimensional persistent homology functor, that is the collection of representations of $Q$ that can be obtained, up to isomorphism, as the zero-dimensional persistent homology of a functor $Q \to \top$.

A \emph{rooted tree quiver} $(T,f_T)$ \emph{over} a rooted tree quiver $Q$ consists of a rooted tree quiver $T$ together with a root-preserving quiver morphism $f_T: T \to Q$.
One of the contributions of this paper is to give an inductive description of rooted tree quivers and of rooted tree quivers over a rooted tree quiver (\cref{section:inductive-rooted-tree-quivers}).
Any such rooted tree quiver over $Q$ can be \emph{linearized} by pushing forward the constant representation of $T$ along $f_T$ (\cref{definition:linearized-rooted-tree-quiver}) to obtain a representation $\kbb_T \in \rep(Q)$.

Our first main result gives an algebraic characterization of the essential image of the zero-dimensional persistent homology functor over rooted tree quivers, and of the additive closure of the essential image (recall that the additive closure of a collection of objects is formed by taking all direct sums of all direct summands of the objects in the collection).

\begin{theoremx}\label{Theorem A}
    Let $Q$ be a rooted tree quiver. Then
    \begin{align}
        \rep_{H_0}(Q)       & = \Big\{\, \text{finite direct sums of linearized rooted tree quivers over $Q$}\, \Big\} \label{equation:image-h0} \\
        \add(\rep_{H_0}(Q)) & = \Big\{\, \text{finite direct sums of rooted tree modules over $Q$} \,\Big\}
        \label{equation:add-image-h0}
    \end{align}
    where the notation on the right-hand side of the equations denotes the corresponding subcategory of $\rep(Q)$ spanned by objects isomorphic to those specified. %between braces.
\end{theoremx}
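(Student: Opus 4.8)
The plan is to reduce both identities to combinatorics of set-valued functors on rooted tree quivers, exploiting that homology with field coefficients factors as $H_0 = \kbb[-]\circ\pi_0$, where $\pi_0\colon\top\to\set$ takes path-components and $\kbb[-]\colon\set\to\vect$ is free linearization. Since every finite set is $\pi_0$ of a finite discrete space, $\rep_{H_0}(Q)$ is, up to isomorphism, precisely the essential image of the pointwise linearization functor $\set^Q\to\rep(Q)$; so for \eqref{equation:image-h0} it is enough to describe $\kbb[S]$ for $S\colon Q\to\set$, and for \eqref{equation:add-image-h0} we additionally close up under direct summands, using Krull--Schmidt in $\rep(Q)$ and \cref{Theorem B}.

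For the inclusion $\supseteq$ in \eqref{equation:image-h0}: by \cref{definition:linearized-rooted-tree-quiver} a linearized rooted tree quiver $\kbb_T$ is obtained by pushing the constant representation of $T$ forward along $f_T$; since $\kbb[-]$ is a left adjoint it commutes with pushforward, so $\kbb_T$ is the pointwise linearization of the set-valued functor $Q\to\set$ obtained by pushing the terminal functor on $T$ forward along $f_T$. Realizing each (finite) value of this functor as a discrete space produces a functor $Q\to\top$ whose $H_0$ is $\kbb_T$; and $\rep_{H_0}(Q)$ is closed under finite direct sums because $H_0$ carries pointwise coproducts of $\top$-valued functors to direct sums. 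For the inclusion $\subseteq$: write $M\cong\kbb[S]$ with $S=\pi_0\circ X$ (so each $S(v)$ is finite), and form the quiver $\Gamma_S$ with vertex set $\bigsqcup_{v}S(v)$ and, for every arrow $\alpha\colon v\to w$ of $Q$ and every $s\in S(v)$, an arrow $s\to S(\alpha)(s)$ lying over $\alpha$, together with the evident quiver morphism $\Gamma_S\to Q$. The structural fact we use --- immediate from the definition of a rooted tree quiver --- is that every non-root vertex of $Q$ has exactly one outgoing arrow and the root has none; hence every vertex of $\Gamma_S$ has out-degree at most one, with out-degree zero precisely over the root, so iterating outgoing arrows terminates and meets no cycle. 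Therefore each connected component of $\Gamma_S$ is a rooted tree quiver over $Q$, rooted at its unique vertex lying over the root of $Q$; writing $\Gamma_S=\bigsqcup_i T_i$, a direct check using \cref{definition:linearized-rooted-tree-quiver} identifies $\kbb[S]$ with $\bigoplus_i\kbb_{T_i}$, so $M$ is a finite direct sum of linearized rooted tree quivers over $Q$.

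For \eqref{equation:add-image-h0}: by \eqref{equation:image-h0} the subcategory $\rep_{H_0}(Q)$ is closed under finite direct sums, so $\add(\rep_{H_0}(Q))$ consists precisely of the direct summands of finite direct sums of linearized rooted tree quivers over $Q$. For $\subseteq$, observe that each $\kbb_T$ is a rooted tree module over $Q$ --- its standard basis has coefficient quiver $T$ (\cref{definition:coefficient-quiver}) --- hence, by \cref{Theorem B}, a finite direct sum of reduced rooted tree modules over $Q$; since reduced rooted tree modules are indecomposable, Krull--Schmidt forces every direct summand of a finite direct sum of such $\kbb_T$ to be again a finite direct sum of reduced rooted tree modules, in particular of rooted tree modules, over $Q$. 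For the reverse inclusion, \cref{Theorem B} reduces the problem to showing that every \emph{reduced} rooted tree module $R$ over $Q$ lies in $\add(\rep_{H_0}(Q))$, equivalently (by \eqref{equation:image-h0}) is a direct summand of some linearized rooted tree quiver over $Q$. I would prove this by induction along Kinser's inductive description of reduced rooted tree modules (\cref{section:inductive-rooted-tree-quivers}): the building blocks of $R$ are reduced rooted tree modules over strictly smaller rooted tree quivers, each a direct summand of some $\kbb_{T_j}$ by the inductive hypothesis; one then glues the $T_j$ into a single rooted tree quiver $T$ over $Q$ and invokes the elder rule, in the form of \cref{proposition:split-off-summand} together with the partial preorder of \cref{definition:order-on-trees}, to certify that $R$ splits off $\kbb_T$.

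The factorization of $H_0$, the discrete-space realization, and the Krull--Schmidt bookkeeping are routine; the genuinely substantial step is the last one --- realizing an arbitrary reduced rooted tree module over $Q$ as a direct summand of a linearized rooted tree quiver over $Q$. This is the expected obstacle precisely because a reduced rooted tree module need not itself belong to $\rep_{H_0}(Q)$: for instance the simple representation at a non-root vertex has a nonzero vector space mapping to zero, which is impossible for the $H_0$ of a diagram of nonempty spaces. So $R$ must be extracted as a summand of a strictly larger linearized object, and pinning down which summand splits off --- and ensuring the construction is compatible with the inductive clauses defining $R$ --- is exactly what the elder rule and the order on rooted tree modules are designed to control.
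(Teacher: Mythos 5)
Your proof of \eqref{equation:image-h0} follows the paper's route: factor $H_0$ through $\set$ via $\pi_0$ and the discrete-space section, then identify set-valued functors on $Q$ with disjoint unions of rooted tree quivers over $Q$ (your $\Gamma_S$ is exactly the paper's functor $\Sigma$, and your out-degree argument for why its components are rooted trees is correct). The inclusion $\subseteq$ of \eqref{equation:add-image-h0}, via \cref{Theorem B}, indecomposability of reduced rooted tree modules, and Krull--Schmidt, also matches the paper (it is the content of \cref{corollary:finite-representation-type}).

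The gap is in the inclusion $\supseteq$ of \eqref{equation:add-image-h0}, which you correctly single out as the substantial step but do not actually carry out. A rooted tree module $M$ is $\iota_*(\kbb_T)$ for $T$ a rooted tree quiver over a downset $Q_{\leq x}$; when $x$ is not the root, $M$ vanishes at the root of $Q$ while every linearized rooted tree quiver over $Q$ is one-dimensional there, so $M$ must be extracted as a proper summand of a larger object. Your proposed mechanism --- glue the inductive building blocks of the reduced module into a single tree over $Q$ and invoke the elder rule --- cannot work as stated: the hypothesis of \cref{proposition:split-off-summand} is that two \emph{distinct} siblings among the glued pieces are $\preceq$-comparable, and for a reduced tree the siblings are by definition pairwise incomparable, so the elder rule splits nothing off the glue of the building blocks. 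The missing idea is a doubling trick: form $T_1 = \merge(\{T,T\},\emptyset,\dots,\emptyset)$ over $Q_{\leq \suc(x)}$, so that the comparability hypothesis holds trivially via $T \preceq T$, and \cref{proposition:split-off-summand} yields $\kbb_{T_1} \cong \iota_*(\kbb_T)\oplus\kbb_{S_1}$; then extend up to the root of $Q$ by iterated single-branch merges, using \cref{lemma:merging-decomposition} to propagate the summand. This is exactly how the paper produces $V \in \rep_{H_0}(Q)$ with $V \cong M \oplus A$; note it applies to an arbitrary rooted tree module, so the preliminary reduction to reduced modules is not even needed for this direction.
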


The second main result, together with Kinser's work \cite{K10}, implies that, when $Q$ is a rooted tree quiver, the category $\add(\rep_{H_0}(Q))$ is of finite type, and the indecomposables are the reduced rooted tree modules.
Reduced rooted tree modules were introduced in~\cite{K10} under the name of ``reduced representations'', and they are linearizations of so-called reduced rooted trees over (subtrees of) $Q$; the original definition of these objects is slightly involved, but see \cref{proposition:characterization-reduced-trees}, which characterizes them as those admitting a unique endomorphism.

\begin{theoremx}\label{Theorem B}
    Let $Q$ be a rooted tree quiver.
    Every rooted tree module over $Q$ decomposes as a direct sum of reduced rooted tree modules over $Q$.
    %Every representation in $\add(\rep_{H_0}(Q))$ decomposes uniquely as a finite direct sum of reduced rooted tree modules over $Q$.
\end{theoremx}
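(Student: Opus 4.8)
The plan is to prove \cref{Theorem B} by strong induction on the number of vertices of $T$, where $M = \kbb_T$ and where the ambient rooted tree quiver $Q$ is allowed to vary; the whole argument reduces to one technical input, an elder rule of the form of \cref{proposition:split-off-summand}. In the shape I expect to use it, this says: if $(T,f_T)$ is a rooted tree quiver over $Q$ that is \emph{not} reduced, then there are a vertex $v$ of $T$ and two distinct children $w, w'$ of $v$ with $f_T(w) = f_T(w')$ such that the subtrees $T_w$ and $T_{w'}$ of $T$ rooted at $w$ and $w'$ are comparable in the preorder of \cref{definition:order-on-trees} --- say $T_w \leq T_{w'}$, witnessed by a root-preserving quiver morphism $\phi \colon T_w \to T_{w'}$ over $Q$ --- and that, consequently, $\kbb_T \cong \kbb_{T \setminus T_w} \oplus \kbb_{T_w}$. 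Here $T \setminus T_w$ is the rooted tree quiver over $Q$ obtained from $T$ by deleting the branch $T_w$ (which leaves $v$ and the root of $T$ untouched), and $\kbb_{T_w}$ is the linearization of $(T_w, f_T|_{T_w})$ over the proper subtree of $Q$ consisting of $f_T(w)$ and the vertices admitting a directed path to it. Concretely, the isomorphism should be realized by the change of basis of $\kbb_T$ that replaces $e_t$ by $e_t - e_{\phi(t)}$ for each vertex $t$ of $T_w$ and fixes every other basis vector: this is an isomorphism because $\phi(T_w)$ is contained in $T_{w'}$, hence disjoint from $T_w$ (so the nilpotent part of the base change squares to zero), and the two summands are the spans of the modified and of the untouched basis vectors, respectively.

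The hypothesis of the elder rule --- non-reducedness producing such a configuration --- I would deduce from the characterization of reducedness in \cref{proposition:characterization-reduced-trees}. If $(T,f_T)$ is not reduced it admits a non-identity endomorphism $\psi$ over $Q$; since $\psi$ fixes the root of $T$, there is a vertex $v$ with $\psi(v) = v$ but $\psi(w) \neq w$ for some child $w$ of $v$, and then $w' := \psi(w)$ is again a child of $v$, satisfies $f_T(w') = f_T(w)$, and $\psi$ restricts to a morphism $T_w \to T_{w'}$ over $Q$, so that $T_w \leq T_{w'}$. Granting all this, the induction is short. The base case $|T| = 1$ is immediate, as $\kbb_T$ is then simple. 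For the inductive step, if $(T,f_T)$ is reduced then, after replacing $Q$ by the subtree $f_T(T)$ (again a rooted tree quiver), $\kbb_T$ is by \cref{definition:reduced-rooted-tree-module} a reduced rooted tree module over $Q$. Otherwise, the elder rule gives $\kbb_T \cong \kbb_{T \setminus T_w} \oplus \kbb_{T_w}$ with both $T \setminus T_w$ and $T_w$ having strictly fewer vertices than $T$; by the induction hypothesis $\kbb_{T \setminus T_w}$ is a direct sum of reduced rooted tree modules over $Q$ and $\kbb_{T_w}$ is a direct sum of reduced rooted tree modules over a subtree of $Q$ --- which are reduced rooted tree modules over $Q$ as well --- and concatenating yields the desired decomposition of $\kbb_T$.

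The main obstacle should therefore be the elder rule \cref{proposition:split-off-summand} and, underneath it, the characterization \cref{proposition:characterization-reduced-trees}. Two points look delicate. First, one must match Kinser's inductively defined notion of reducedness (\cref{definition:reduced-rooted-tree-module}) with the intrinsic property of having a trivial endomorphism monoid; this needs the inductive description of rooted tree quivers over $Q$ from \cref{section:inductive-rooted-tree-quivers} and a careful analysis of how an endomorphism of $(T,f_T)$ decomposes along the branching of $T$. Second --- and this is exactly why a preorder, rather than isomorphism, is the right notion --- one must check that the comparability of \cref{definition:order-on-trees} is coarse enough that \emph{every} failure of reducedness produces a comparable pair of sibling subtrees: a pair of sibling subtrees that is dominated-but-not-isomorphic already yields a non-identity endomorphism, so the relation cannot be refined. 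Finally, there is a routine but necessary bookkeeping check that deleting a branch and re-rooting always produce honest rooted tree quivers over the relevant (sub-)quivers, keeping the induction within the class of rooted tree modules.
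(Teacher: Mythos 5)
Your proposal is correct and follows essentially the same route as the paper: the whole weight rests on the elder rule (\cref{proposition:split-off-summand}), applied repeatedly to strip off branches until only reduced pieces remain, with the same change-of-basis $e_t \mapsto e_t - e_{\phi(t)}$ realizing the splitting. The only cosmetic differences are that you locate the comparable pair of sibling subtrees via a non-identity endomorphism (\cref{proposition:characterization-reduced-trees}) rather than directly from the inductive definition of reducedness, and you split at an arbitrary vertex with a global base change where the paper works at the root and propagates upward via \cref{lemma:merging-decomposition}; both variants are sound.
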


\begin{corollaryx}
    \label{corollary:finite-representation-type}
    Let $Q$ be a rooted tree quiver.
    The category $\add(\rep_{H_0}(Q))$ is of finite type, and the indecomposables are the reduced rooted tree modules over $Q$.
\end{corollaryx}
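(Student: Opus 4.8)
The plan is to deduce the corollary formally from \cref{Theorem A} and \cref{Theorem B}, together with two facts about reduced rooted tree modules: that each of them is indecomposable, and that there are only finitely many of them up to isomorphism. The first fact is immediate from \cref{proposition:characterization-reduced-trees}; the second is the only genuinely non-formal ingredient, and it is where I would invoke Kinser's analysis \cite{K10}.

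First I would combine \eqref{equation:add-image-h0} of \cref{Theorem A} with \cref{Theorem B}. By the former, every object of $\add(\rep_{H_0}(Q))$ is a finite direct sum of rooted tree modules over $Q$; by the latter, each such summand decomposes further as a finite direct sum of reduced rooted tree modules over $Q$. Hence every object of $\add(\rep_{H_0}(Q))$ is a finite direct sum of reduced rooted tree modules over $Q$. Conversely, a reduced rooted tree module over $Q$ is by definition the linearization of a reduced rooted tree over a subtree of $Q$, hence in particular a rooted tree module over $Q$, and therefore lies in $\add(\rep_{H_0}(Q))$ again by \eqref{equation:add-image-h0}. Thus $\add(\rep_{H_0}(Q))$ is exactly the additive closure, inside $\rep(Q)$, of the collection of reduced rooted tree modules over $Q$.

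Next I would invoke the Krull--Schmidt property. Since $Q$ is a finite tree quiver it has no oriented cycles, so the path algebra $\kbb Q$ is finite dimensional and $\rep(Q)$ is a Krull--Schmidt category; being a full subcategory closed under direct summands and finite direct sums, $\add(\rep_{H_0}(Q))$ is Krull--Schmidt as well. By the previous paragraph and uniqueness of indecomposable decompositions, the indecomposable objects of $\add(\rep_{H_0}(Q))$ are precisely the indecomposable reduced rooted tree modules over $Q$. That each reduced rooted tree module $M$ is in fact indecomposable follows from \cref{proposition:characterization-reduced-trees}: it characterizes reduced rooted tree modules by a uniqueness property for their endomorphisms, which says that $\End(M)$ reduces to the scalars, hence is a local ring, hence $M$ admits no nontrivial idempotent endomorphisms. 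This identifies the indecomposables of $\add(\rep_{H_0}(Q))$ with the reduced rooted tree modules over $Q$, which is the second assertion of the corollary.

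It remains to see that $\add(\rep_{H_0}(Q))$ has only finitely many indecomposables, i.e.\ that there are finitely many reduced rooted tree modules over $Q$ up to isomorphism. This is the step I expect to be the main obstacle, in the sense of being the only one that is not pure bookkeeping: here one uses Kinser's classification of the reduced representations of a rooted tree quiver \cite{K10}, which in particular exhibits them as a finite set. (Alternatively, one can argue directly from the inductive definition of reduced rooted trees, via the elder-rule mechanism of \cref{proposition:split-off-summand} and the preorder of \cref{definition:order-on-trees}: roughly, in a reduced rooted tree over a subtree of $Q$ the children-subtrees at any vertex must be pairwise incomparable in that preorder, which forces a uniform bound on branching and depth, and hence on the dimension vectors, in terms of the combinatorics of $Q$.) Combined with the previous paragraph, this yields that $\add(\rep_{H_0}(Q))$ is of finite type, completing the proof.
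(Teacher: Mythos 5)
Your overall architecture (Theorem A $+$ Theorem B $+$ indecomposability of reduced rooted tree modules $+$ finiteness of their isomorphism classes $+$ Krull--Schmidt) matches the paper's, but two steps do not go through as written.

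First, your argument that each reduced rooted tree module $M$ is indecomposable is wrong as stated. \cref{proposition:characterization-reduced-trees} says that a reduced rooted tree quiver $T$ over $Q$ has $\hom_{\rtree_{/Q}}(T,T)=\{\id_T\}$; this is a statement about \emph{quiver} endomorphisms of $T$ over $Q$, not about $\End_{\rep(Q)}(\kbb_T)$. Linearization is far from full, and $\End(\kbb_T)$ is in general much larger than the scalars even when $T$ is reduced (besides scalar multiples of the identity, it typically contains nonzero nilpotent endomorphisms vanishing at the root). So the inference ``unique endomorphism $\Rightarrow \End(M)$ reduces to the scalars'' fails; what is true, and what the paper proves separately as \cref{theorem:characterization-indecomposable} (Kinser's Corollary~19), is that $\End(\kbb_T)$ is a \emph{local} ring when $T$ is reduced, with maximal ideal the endomorphisms vanishing at the root. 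Your proof needs to cite that theorem (or reprove its locality argument) rather than \cref{proposition:characterization-reduced-trees}.

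Second, relative to the paper's logical order your appeal to \cref{Theorem A}(\ref{equation:add-image-h0}) is circular: the paper proves the inclusion $\add(\rep_{H_0}(Q))\subseteq\{\text{sums of rooted tree modules}\}$ \emph{from} this corollary, so you cannot use it as an input. The repair is exactly what the paper does: use \cref{Theorem A}(\ref{equation:image-h0}) together with \cref{lemma:rooted-tree-module-is-linearized-rooted-tree} to see that every object of $\rep_{H_0}(Q)$ is a direct sum of rooted tree modules, apply \cref{Theorem B} to refine this to a sum of reduced rooted tree modules, and then invoke Krull--Schmidt to control all direct summands. Your finiteness step is fine in spirit --- the paper likewise argues by induction on the inductive structure of reduced rooted trees (pairwise $\preceq_{Q_i}$-incomparable, hence pairwise non-isomorphic, children drawn from a finite set) --- though your parenthetical sketch via the elder rule is not needed for it.
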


The third main contribution consists of algorithms for the effective decomposition of rooted tree modules over rooted tree quivers.

\begin{theoremx}\label{thm::algorithm}
    Let $Q$ be a rooted tree quiver.
    \begin{enumerate}
        \item Given $T$, a rooted tree quiver over $Q$, \cref{Algorithm 1} returns the summands in the indecomposable decomposition of $\kbb_T \in \rep(Q)$ in $O(|T|^2)$ time.
        \item Given $(G,f)$ a $Q$-filtered graph, \cref{Algorithm 2} returns the summands in the indecomposable decomposition of $H_0(f) \in \rep(Q)$ in $O(|G|^2)$ time.
    \end{enumerate}

\end{theoremx}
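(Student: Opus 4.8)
The plan is to realize both algorithms as iterated applications of the elder rule in the form of \cref{proposition:split-off-summand}, so that correctness follows formally from \cref{Theorem B} and the uniqueness of indecomposable decompositions in $\rep(Q)$; the substantive work is then the running-time analysis.

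For part (1) I would argue by induction on $|T|$, guided by the inductive description of rooted tree quivers over $Q$ from \cref{section:inductive-rooted-tree-quivers}. The loop invariant is a rooted tree quiver $(T,f_T)$ over $Q$ whose linearization is the part of the original module not yet split off, together with a union-find structure on the vertices of $T$ recording the components of the partial summands. At each step one uses the preorder of \cref{definition:order-on-trees} to locate a $\preceq$-minimal branch of $T$; by \cref{proposition:split-off-summand} the associated reduced rooted tree module $R$ is a direct summand, $\kbb_T \cong R \oplus \kbb_{T'}$, where $T'$ is obtained from $T$ by deleting that branch and relabelling along $f_T$ as the elder rule prescribes, so that $(T',f_{T'})$ is again a rooted tree quiver over $Q$ with $|T'| < |T|$. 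One records $R$, sets $T := T'$, and iterates until $T$ is empty; by \cref{Theorem B} the recorded list is the indecomposable decomposition of $\kbb_T$. Each iteration can be carried out in $O(|T|)$ time — the excision of the branch and the relabelling along $f_T$ are a bounded number of traversals of the current tree, and the $\preceq$-comparisons needed to identify the minimal branch can be organized so that their cost per iteration is linear in $|T|$ — and there are at most $|T|$ iterations, since each deletes at least one vertex; hence the $O(|T|^2)$ bound.

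For part (2), the key point is that $H_0(f)$ is itself a linearized rooted tree quiver over $Q$. Sweeping $Q$ from its leaves toward the root, one maintains the connected components of the stages $G_q \subseteq G$ of the filtration in a union-find structure, emitting a new vertex of $T$ at each birth of a component and at each merge; this produces a merge-tree-type rooted tree quiver $(T,f_T)$ over $Q$ together with an isomorphism $H_0(f) \cong \kbb_T$, which one either quotes from \cref{Theorem A} or checks directly by verifying that the pushforward defining $\kbb_T$ reproduces the component counts $\dim\, H_0(f)_q = |\pi_0(G_q)|$ with the correct structure maps. Since each vertex of $T$ is charged either to the appearance of a vertex of $G$ or to a merge caused by an edge of $G$, we get $|T| = O(|G|)$. \cref{Algorithm 2} therefore builds $T$ in $O(|G|^2)$ time and then runs \cref{Algorithm 1} on $T$, which takes $O(|T|^2) = O(|G|^2)$ time; correctness of the second step is part (1), and correctness of the first is the isomorphism $H_0(f) \cong \kbb_T$.

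I expect the main obstacle to be the complexity bookkeeping rather than the correctness, which is essentially forced by the earlier results. Two points need particular care. First — the place where correctness meets efficiency — one must check that a greedily chosen $\preceq$-minimal branch is always one to which \cref{proposition:split-off-summand} applies, i.e.\ that $\preceq$-minimality is preserved, or can be re-established cheaply, after each excision and relabelling. Second, one must set up the data structures so that the complementary object $\kbb_{T'}$ is produced already in the normal form of a rooted tree quiver over $Q$, and so that the $\preceq$-comparisons, which are the genuine bottleneck, really do cost only $O(|T|)$ per iteration; pinning this down is what makes the linear-per-pass claim, and hence the quadratic overall bound, rigorous.
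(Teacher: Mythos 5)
Your part (2) matches the paper: \cref{Algorithm 2-aux} is exactly the leaves-to-root union-find sweep you describe, $|T| = O(|G|)$ for the same reason, and correctness reduces to $H_0(f) \cong \kbb_T$ (via \cref{lemma:linearization-is-free}) plus part (1). For part (1), however, you propose a genuinely different algorithmic organization — repeatedly locate a $\preceq$-minimal branch, split it off by the elder rule, and recurse on the complement — whereas the paper's \cref{Algorithm 1} makes a \emph{single} bottom-up pass over the levels of $T$, simultaneously (i) pruning each vertex's predecessors down to a maximal $\preceq$-antichain (each pruned edge being one application of \cref{proposition:split-off-summand} together with \cref{lemma:merging-decomposition}) and (ii) incrementally computing a relation $R$ that records the preorder $\preceq$ between the subtrees rooted at the current level. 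The disjoint union of reduced trees that survives is the decomposition, by \cref{theorem:characterization-indecomposable}.

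There are two gaps in your plan, both of which the paper's level-synchronous design is built to avoid. First, the piece split off by \cref{proposition:split-off-summand} is $\iota_*(\kbb_{T_i^j})$ for a branch $T_i^j$ that is dominated by a \emph{sibling at the same gluing point}; a globally $\preceq$-minimal branch need not have a comparable sibling (the hypothesis of the elder rule can fail at the root while holding deeper in the tree), and the branch you excise need not be reduced, so what you record is not yet an indecomposable summand. You flag this but do not resolve it; the resolution requires enforcing reducedness at every level, which is what the bottom-up antichain pruning does. Second, and more seriously, the claim that each iteration costs $O(|T|)$ is unsubstantiated: a single comparison $S \preceq_{Q} T'$ is itself defined recursively and is not obviously linear-time, and identifying a minimal branch requires many pairwise comparisons. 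The paper's key idea is that once the relation $R$ is known on the vertices of level $\ell+1$, deciding $x\,R\,y$ for level-$\ell$ vertices costs only $O(|\pred(x)|\cdot|\pred(y)|)$ lookups, so the total cost of all comparisons over the whole run telescopes to $O\bigl(\bigl(\sum_x |\pred(x)|\bigr)^2\bigr) = O(|T|^2)$. Without an analogous amortization your iterate-and-excise scheme has no clear quadratic bound — naively it is at least cubic. This incremental computation of $\preceq$ is the missing ingredient that simultaneously fixes the correctness criterion (maximal antichains of predecessors, not minimal branches) and the complexity.
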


We conclude the paper by describing in \cref{section:TDA-applications} how our results can be used to study morphisms between merge trees, as well as the zero-dimensional persistent homology of two-parameter filtrations.

\begin{table}
    \centering
    {\footnotesize
        \hspace*{-0.07cm}%
        \begin{tabular}{ |l |l |  }
            \hline
            Rooted tree poset (a type of poset)                                                          & Sec.~\ref{section:inductive-rooted-tree-quivers} \\
            Rooted tree quiver (a type of quiver)                                                        & Sec.~\ref{section:background-quiver}             \\
            Rooted tree quiver over a rooted tree quiver $Q$ (a type of quiver morphism $T \to Q$)                     & Sec.~\ref{section:categories-of-quivers}         \\
            Linearized rooted tree quiver over a rooted tree quiver $Q$ (a type of representation of $Q$)  & Def.~\ref{definition:linearized-rooted-tree-quiver}      \\
            Rooted tree module over a quiver $Q$ (a type of representation of $Q$)                                    & Def.~\ref{definition:rooted-tree-module}             \\
            Reduced rooted tree quiver over a rooted tree quiver $Q$ (a type of quiver morphism $T \to Q$) & Def.~\ref{definition:reduced}                    \\
            Reduced rooted tree module over a rooted tree quiver $Q$ (a type of representation of $Q$)                & Def.~\ref{definition:reduced-rooted-tree-module} \\
            Order $\leq_Q$ (an order on vertices of a tree quiver $Q$)                                      & Def.~\ref{definition:order-on-quiver}            \\
            Preorder $\preceq_Q$ (a preorder on rooted tree quivers over a rooted tree quiver $Q$)             & Def.~\ref{definition:order-on-trees}             \\
            \hline
        \end{tabular}
    }
    \caption{Table of main notions.}
    \label{table:notions}
\end{table}

\section{Background}

Throughout this article we fix a field $\kbb$, all vector spaces are taken to be $\kbb$-vector spaces, and we let $\vect$ denote the category of finite dimensional $\kbb$-vector spaces.
All quivers are assumed to be finite, and all representations are assumed to be finite dimensional.
We assume familiarity with basic category theory.

\subsection{Quivers, tree quivers, and rooted tree quivers}
\label{section:background-quiver}
A \emph{(finite) quiver} $Q=(Q_0,Q_1,s,t)$ consists of a finite \emph{vertex set} $Q_0$ and a finite \emph{edge set} $Q_1$, together with two functions $s,t : Q_1 \to Q_0$, called \emph{source map} and \emph{target map} respectively.
Note that a quiver is equivalently a directed graph possibly with loops and multi-edges.

A quiver is a \emph{tree quiver} if its underlying graph, when seen as undirected graph, is a tree (i.e., it is connected and without cycles).
A \emph{rooted tree quiver} is a tree quiver $Q$ with exactly one vertex $\sigma \in Q_0$ with out-degree zero.
The vertex $\sigma$ is called the \emph{root} of $Q$.
We sometimes denote a rooted tree $Q$ with root $\sigma$ by $(Q,\sigma)$.

The one-vertex and no-edge quiver is a rooted tree quiver, which we denote by $\ast$.

\subsection{Tree quivers and tree posets}
\label{section:tree-quivers-tree-posets}
The proofs in this subsection are straightforward exercises, and are hence omitted.

\smallskip

Reachability through a directed path defines, for every tree quiver $Q$, a partial order $\leq_Q$ on $Q_0$, which we now describe\footnote{For readers familiar with \cite{K10}, we mention that this order is the opposite of the one used in \cite{K10}.}.

\begin{definition}
    \label{definition:order-on-quiver}
    Let $Q$ be a quiver and let $x,y \in Q_0$.
    A \emph{directed path} from $x$ to $y$ in $Q$ is a (possibly empty) list $L$ of edges of $Q$ such that, if $L$ is empty, then $x=y$, and if $L = \{\alpha_1, \dots, \alpha_k\}$, then $s(\alpha_1) = x$, $t(\alpha_k) = y$, and $t(\alpha_i) = s(\alpha_{i+1})$ for all $1 \leq i \leq k-1$.
    If $Q$ is a tree quiver, let $x \leq_Q y$ if and only if there exists a directed path from $x$ to $y$.
\end{definition}

All principal downsets of a rooted tree quiver form themselves a rooted tree quiver, in the following sense.

\begin{definition}
    If $Q$ is a tree quiver (or, more generally, a disjoint union of tree quivers), and $x \in Q_0$, let $Q_{\leq x}$ denote the subquiver of $Q$ spanned by vertices $y \in Q_0$ such that $y \leq_Q x$.
    The quiver $Q_{\leq x}$ is a rooted tree quiver, with root $x$.
\end{definition}

\begin{lemma}
    \label{lemma:rooted-tree-lattice}
    A tree quiver $Q$ is a rooted tree quiver precisely when the poset $(Q_0, \leq_Q)$ has a unique maximal element (which is necessarily equal to the root of $Q$).
    Moreover, if $Q$ is a rooted tree quiver, the poset $(Q_0, \leq_Q)$ is a join-semilattice (a.k.a.~upper semilattice), that is for all $x, y \in Q_0$, the \emph{join} $x \vee y$ (i.e., greatest lower bound) of $\{x,y\}$ exists (and is thus unique).
    \qed
\end{lemma}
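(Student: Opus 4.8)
The plan is to treat the two assertions separately, both resting on one elementary observation: a finite tree quiver has no directed cycles (a directed cycle would in particular be an undirected cycle), so from any vertex $x$ there is a directed path of maximal length, and such a path necessarily terminates at a vertex of out-degree zero. Moreover a vertex $y \in Q_0$ is maximal with respect to $\leq_Q$ if and only if it has out-degree zero: if $y$ has out-degree zero, the only directed path starting at $y$ is the empty one, so $y \leq_Q z$ forces $z = y$; conversely, if $y$ has an outgoing edge, its target $z$ satisfies $z \neq y$ (tree quivers have no loops), so $y <_Q z$ and $y$ is not maximal.

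For the first assertion I would then simply combine these facts: the maximal-path argument shows that the set of out-degree-zero vertices of $Q$ is nonempty and, by the previous paragraph, coincides with the set of maximal elements of $(Q_0,\leq_Q)$. Hence $(Q_0,\leq_Q)$ has a unique maximal element if and only if $Q$ has exactly one vertex of out-degree zero, i.e.\ if and only if $Q$ is a rooted tree quiver; and in that case the unique maximal element is, by definition, the root.

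For the second assertion, I would first record that in a rooted tree quiver every non-root vertex $v$ has out-degree exactly one: if $v$ had two distinct outgoing edges $v \to a$ and $v \to b$, extending each to a maximal directed path would yield directed paths $v \to a \to \cdots \to \sigma$ and $v \to b \to \cdots \to \sigma$, both ending at the unique out-degree-zero vertex $\sigma$, and their union would contain an undirected cycle, contradicting that $Q$ is a tree. Thus each non-root vertex has a unique parent, and iterating shows that for every $x \in Q_0$ the up-set $\{\, z \in Q_0 : x \leq_Q z \,\}$ is a finite chain running from $x$ up to $\sigma$. Given $x,y \in Q_0$, the set of common upper bounds of $\{x,y\}$ is the intersection of the two finite chains $\{\, z : x \leq_Q z \,\}$ and $\{\, z : y \leq_Q z \,\}$, hence is itself a nonempty (it contains $\sigma$) finite chain, so it has a least element; this least element is the join $x \vee y$. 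Uniqueness of the join is automatic in any poset.

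There is no serious obstacle here, in keeping with the remark that these are straightforward exercises; the single point that genuinely deserves an explicit argument rather than being waved through is the out-degree-one claim for non-root vertices, since it is what upgrades the existence of upper bounds to the existence of a \emph{least} upper bound and is precisely where the acyclicity of the tree is used.
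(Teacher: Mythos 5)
Your proof is correct; the paper omits any argument for this lemma (it is listed among the ``straightforward exercises'' of that subsection), and your route --- identifying maximal elements with out-degree-zero vertices, then using the out-degree-one property of non-root vertices to make each up-set a chain terminating at the root --- is exactly the expected one, with the right emphasis on where acyclicity of the underlying tree is used. Note also that you correctly read the join $x \vee y$ as the \emph{least upper bound} despite the statement's slip in calling it the ``greatest lower bound.''
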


\begin{lemma}
    If $(Q,\sigma)$ is a rooted tree quiver and $x \neq \sigma \in Q_0$, then $x$ has a unique cover in the order $\leq_Q$.
    \qed
\end{lemma}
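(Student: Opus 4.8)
The plan is to reduce the statement to a single combinatorial fact about rooted tree quivers—that every non‑root vertex has out-degree exactly one—and then read off the unique cover of a vertex $x \neq \sigma$ as the target of the unique edge leaving $x$.

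First I would establish the out-degree fact. Since $Q$ is a rooted tree quiver, $\sigma$ is by definition the \emph{only} vertex of out-degree zero, so every $x \neq \sigma$ emits at least one edge. To upgrade ``at least one'' to ``exactly one'', I would count: the underlying graph of $Q$ is a tree on $|Q_0|$ vertices, hence has $|Q_0| - 1$ edges, so the sum of the out-degrees of all vertices equals $|Q_0| - 1$; as the root contributes $0$, the $|Q_0| - 1$ non-root vertices have out-degrees that are each at least $1$ and that sum to $|Q_0| - 1$, forcing each of them to be exactly $1$. (If $|Q_0| = 1$ then $Q = \ast$ and the lemma is vacuous, so I may assume $|Q_0| \geq 2$.)

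Now fix $x \neq \sigma$, let $\alpha$ be its unique outgoing edge, and set $y := t(\alpha)$; since a tree quiver has no loops, $y \neq x$, and there is a directed path (namely the single edge $\alpha$) from $x$ to $y$, so $x <_Q y$. To see that $y$ covers $x$, suppose $x <_Q z \leq_Q y$. A nonempty directed path from $x$ to $z$ must begin with $\alpha$, the only edge out of $x$, so deleting $\alpha$ leaves a directed path from $y$ to $z$, whence $y \leq_Q z$; combined with $z \leq_Q y$, antisymmetry of the partial order $\leq_Q$ (\cref{definition:order-on-quiver}) gives $z = y$, so nothing lies strictly between $x$ and $y$. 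For uniqueness, if $y'$ is any cover of $x$, then $x <_Q y'$, and any directed path realizing this again starts with $\alpha$ and therefore factors through $y$, giving $y \leq_Q y'$; were $y \neq y'$ we would have $x <_Q y <_Q y'$, contradicting that $y'$ covers $x$, so $y' = y$.

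I do not expect a genuine obstacle: the only delicate point is the edge count that pins the out-degree to exactly $1$, together with keeping track of the fact that ``tree quiver'' forbids loops and multi-edges, which is precisely what makes directed paths in $Q$ behave like the unique paths of a tree and lets the argument about intermediate vertices go through.
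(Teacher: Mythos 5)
Your proof is correct; the paper omits this lemma's proof as a routine exercise, so there is nothing to compare against, but your argument (the handshake/edge count forcing every non-root vertex to have out-degree exactly one, whose unique outgoing edge's target is then the unique cover) is exactly the standard one. The only point worth being careful about — that $\leq_Q$ is genuinely antisymmetric, which holds because a tree quiver has no directed cycles — you invoke correctly, and it is already asserted in the paper's Definition of $\leq_Q$.
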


\begin{definition}
    If $(Q,\sigma)$ is a rooted tree quiver and $x \neq \sigma \in Q_0$,
    we denote the unique cover of $x$ in the order $\leq_Q$ by $\suc(x)$ and refer to it as the \emph{successor} of $x$.
    The set of predecessors of $x$ is $\pred(x) = \{y \in Q_0 \setminus \{\sigma\} : \suc(y) = x\}$.
\end{definition}

\begin{definition}
A \emph{tree poset} is a poset whose Hasse diagram is a tree.
A \emph{rooted tree poset} is a tree poset with a maximum.
\end{definition}

Given a tree poset $P$ we construct a quiver $Q_P$ as follows.
As vertices we put the elements of the underlying set of $P$, and for every pair $x,y \in P$ such that $y$ covers $x$, we put an edge $x \to y$.

\begin{lemma}
    If $Q$ is a (rooted) tree quiver, then $(Q_0, \leq_Q)$ is a (rooted) tree poset.
    If $P$ is a (rooted) tree poset, then $Q_P$ is a (rooted) tree quiver.
    \qed
\end{lemma}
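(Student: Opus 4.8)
The statement combines two essentially symmetric claims, and the plan is to treat them in turn, the first taking most of the work.

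\textbf{From tree quivers to tree posets.} First I would verify that $\leq_Q$ is a partial order on $Q_0$ when $Q$ is a tree quiver: reflexivity is witnessed by the empty directed path and transitivity by concatenation, so the only real content is antisymmetry, and this is the step I expect to be the main (indeed the only) obstacle. The key observation is that a tree quiver admits no nonempty closed directed walk: such a walk, after forgetting orientations, would be a nonempty closed walk in the underlying undirected tree, hence it would have to backtrack over some edge; but traversing an edge of $Q$ and then immediately traversing it in the opposite direction is impossible, since the reverse orientation of an edge of $Q$ is not an edge of $Q$. Consequently, if $x \leq_Q y$ and $y \leq_Q x$ with $x \neq y$, concatenating the two witnessing directed paths produces a nonempty closed directed walk, a contradiction, so $x = y$.

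Next I would identify the Hasse diagram of $(Q_0, \leq_Q)$, viewed as an undirected graph, with the underlying graph of $Q$; this immediately yields that $(Q_0, \leq_Q)$ is a tree poset. Concretely, for $x \neq y$ there is an edge $x \to y$ in $Q$ if and only if $y$ covers $x$ in $\leq_Q$. For the forward direction, an edge $x \to y$ gives $x <_Q y$, and any $z$ with $x <_Q z <_Q y$ would yield a directed path from $x$ to $y$ of length at least two, contradicting uniqueness of paths in the underlying tree since the edge $x \to y$ is itself a path of length one. For the converse, given that $y$ covers $x$, choose a nonempty directed path $x = v_0 \to \cdots \to v_k = y$; if $k \geq 2$ then $v_1 \notin \{x,y\}$ (the edge $v_0 \to v_1$ is not a loop, and $v_1 = y$ would give a nonempty closed directed walk), so $x <_Q v_1 <_Q y$ contradicts the covering relation; hence $k = 1$. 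As the underlying graph of a tree quiver is simple, edges are determined by their endpoints, so this establishes the claimed identification of graphs, and $(Q_0, \leq_Q)$ is therefore a tree poset. When $Q$ is moreover a rooted tree quiver, \cref{lemma:rooted-tree-lattice} gives that $(Q_0, \leq_Q)$ has a unique maximal element, which in a finite poset is a maximum; hence $(Q_0, \leq_Q)$ is a rooted tree poset.

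\textbf{From tree posets to tree quivers.} Here observe that, by construction, the underlying undirected graph of $Q_P$ is exactly the Hasse diagram of $P$, so $Q_P$ is a tree quiver whenever $P$ is a tree poset. Suppose now $P$ is a rooted tree poset with maximum $m$. In the tree quiver $Q_P$ there are $|P| - 1$ edges, and the out-degree of a vertex $x$ equals the number of elements of $P$ covering $x$. The maximum $m$ has out-degree zero, while any $x \neq m$ satisfies $x < m$ and hence, $P$ being finite, has at least one cover, i.e.\ out-degree at least one. Summing, $|P| - 1 = \sum_{x \neq m} \mathrm{outdeg}(x) \geq |P| - 1$, so equality holds; in particular $m$ is the unique vertex of out-degree zero, and $Q_P$ is a rooted tree quiver with root $m$. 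Apart from the non-existence of nonempty closed directed walks and its two consequences above, every step is a direct unwinding of the definitions together with uniqueness of paths in trees.
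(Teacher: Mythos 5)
The paper gives no proof of this lemma (the whole subsection's proofs are "omitted as straightforward exercises"), so there is nothing to compare against; your argument is correct and complete. The one nontrivial point — antisymmetry of $\leq_Q$ and the identification of edges of $Q$ with cover relations — is handled correctly via the observation that a tree quiver admits no nonempty closed directed walk (since a closed walk in the underlying tree must immediately backtrack, which the orientation and simplicity of the underlying tree forbid), and the remaining steps are the routine unwindings the authors had in mind.
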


\subsection{Quiver representations}
Let $Q = (Q_0, Q_1, s, t)$ be a quiver.
A (finite dimensional) \emph{representation} $V$ of $Q$ consists of a vector space $V_x$ for each vertex $x \in Q_0$, and a linear morphism $V_{\alpha} : V_{s(\alpha)} \to V_{t(\alpha)}$ for each edge $\alpha \in Q_1$.
If $V$ and $W$ are representations of $Q$, a \emph{morphism} $f : V \to W$ consists of a morphism $f_x : V_x \to W_x$ for every $x \in Q_0$, with the property that $W_{\alpha} \circ f_{s(\alpha)} = f_{t(\alpha)} \circ V_{\alpha}$.
Representations over $Q$ together with morphisms between them form the \emph{category of representations} of $Q$ denoted $\rep(Q)$.

The \emph{zero representation}, denoted $\mathbb{0}$, is the (unique) representation with $\mathbb{0}_x = 0$ for every $x \in Q_0$.
The \emph{constant representation}, denoted $\mathbb{1}_Q \in \rep(Q)$, is the representation such that $(\mathbb{1}_Q)_x = \kbb$ for every $x \in Q_0$, and $(\mathbb{1}_Q)_\alpha$ is the identity map $\kbb \to \kbb$ for every $\alpha \in Q_1$.

The category $\rep(Q)$ is an additive category \cite[Section 1.2]{DW}.
As in any additive category, a representation is said to be \emph{indecomposable} if it is not the zero representation, and it is also not isomorphic to the direct sum of two non-zero representations.
The category $\rep(Q)$ is Krull--Schmidt \cite[Theorem 1.7.4]{DW}, in the sense that
every representation $V \in \rep(Q)$ is a finite direct sum of indecomposable representations, and these indecomposables are unique up to isomorphism.

The next result allows us to switch from posets to quivers when studying representations of rooted tree posets.

\begin{lemma}
    \label{section:poset-quiver-representations}
    If $P$ is a tree poset, the category of functors $P \to \vect$ is equivalent to the category $\rep(Q_P)$.
\end{lemma}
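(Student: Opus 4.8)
The plan is to reduce the statement to an isomorphism of categories $P \cong \mathrm{Path}(Q_P)$, where $P$ is viewed as a category in the usual way (objects the elements of $P$, a single morphism $x \to y$ for each relation $x \leq y$) and $\mathrm{Path}(Q_P)$ denotes the path category of the quiver $Q_P$. Granting such an isomorphism, precomposition yields an equivalence $\vect^{P} \simeq \vect^{\mathrm{Path}(Q_P)}$, and composing with the canonical equivalence $\rep(Q_P) \simeq \vect^{\mathrm{Path}(Q_P)}$ recalled above gives $\rep(Q_P) \simeq \vect^{P}$, which is the assertion.

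To build the isomorphism $\Phi \colon P \to \mathrm{Path}(Q_P)$, I would take $\Phi$ to be the identity on objects. Given $x \leq y$ in $P$, finiteness of $P$ provides a saturated chain $x = z_0 \lessdot z_1 \lessdot \cdots \lessdot z_k = y$; by construction each covering relation $z_i \lessdot z_{i+1}$ is an edge $z_i \to z_{i+1}$ of $Q_P$, so concatenating these edges produces a directed path from $x$ to $y$ in $Q_P$, which I declare to be $\Phi$ of the unique morphism $x \to y$. This is the one place the tree hypothesis is used: the underlying undirected graph of $Q_P$ is a tree, hence there is a unique reduced (non-backtracking) walk between any two of its vertices, so the directed path just constructed is independent of the chosen saturated chain and $\Phi$ is well defined. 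Functoriality is then immediate, since for $x \leq y \leq z$ the concatenation of the paths chosen for $x \to y$ and $y \to z$ is again a directed path $x \to z$, hence coincides with the one chosen for $x \to z$.

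It remains to see that $\Phi$ is an isomorphism of categories. It is a bijection on objects by construction, so it suffices to check it is bijective on each hom-set. Fix $x, y \in P$. If $x \leq y$, then $\Hom_P(x,y)$ is a singleton, and $\Hom_{\mathrm{Path}(Q_P)}(x,y)$ is a singleton too: it is nonempty by the construction above, and any two directed paths from $x$ to $y$ are in particular reduced walks between the same endpoints in a tree, hence equal. If $x \not\leq y$, then $\Hom_P(x,y) = \varnothing$, and there is no directed path from $x$ to $y$ in $Q_P$ either, since composing the covering relations crossed by such a path would witness $x \leq y$. Hence $\Phi$ is fully faithful, completing the argument.

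The only real content is the identification $P \cong \mathrm{Path}(Q_P)$, and all of it sits in the tree hypothesis: it is exactly the absence of distinct paths between vertices that makes the free (path) category on $Q_P$ already equal to the ``incidence category'' of $P$, with no commutativity relations to impose --- for a general poset such as the commutative square the analogous $\Phi$ is faithful but not full. Beyond that, the proof is bookkeeping, the only other ingredient being finiteness of $P$ (guaranteed here, since tree posets correspond to finite tree quivers) in order to extract saturated chains. I do not anticipate a genuine obstacle.
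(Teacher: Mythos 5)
Your proposal is correct and is essentially the paper's proof repackaged: the paper directly constructs the two mutually inverse functors (restriction to cover relations in one direction, composition along a saturated chain in the other), and the well-definedness of the latter rests on exactly the fact you isolate, namely uniqueness of directed paths between two vertices of a tree quiver. Your factorization through the isomorphism $P \cong \mathrm{Path}(Q_P)$ combined with the standard equivalence $\rep(Q_P) \simeq \vect^{\mathrm{Path}(Q_P)}$ is a clean way of organizing the same bookkeeping, with no substantive difference in content.
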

\begin{proof}
    The proof is a routine check, so let us only outline the steps involved.
    Given a functor $F : P \to \vect$, we define a representation of $V$ of $Q_P$ as follows.
    We let $V_x = F(x)$ for every $x \in P$.
    Now, by construction, each edge $(x,y)$ in $Q_P$ corresponds to a pair $x,y \in P$ such that $y$ covers $x$, so we define $V_{(x,y)} = F(x \leq y) : V_x \to V_y$.
    This induces a functor $\vect^P \to \rep(Q_P)$.

    Similarly, given a representation $V \in \rep(Q_P)$, we define a functor $F : P \to \vect$ as follows.
    We let $F(x) = V_x$ for every $x \in P$.
    If $x \lneq y \in P$, let $x_1, \dots, x_k \in P$ be such that $x_1 = x$, $x_k = y$, and $x_{i+1}$ covers $x_i$.
    Then, define $F(x \leq y) = V_{(x_{k-1},x_{k})} \circ \cdots \circ V_{(x_1, x_2)} : F(x) \to F(y)$.
    To check that $F$ is indeed a functor, one uses the fact that $Q_P$ is a tree quiver, so that there exists a unique directed path from $x$ to $y$ in $Q_P$.
    This induces a functor $\rep(Q_P) \to \vect^P$.

    Finally, one checks that the two functors we have defined are inverse equivalences of categories.
\end{proof}

\subsection{Categories of quivers}
\label{section:categories-of-quivers}
A \emph{quiver morphism} $f : Q \to Q'$ from a quiver $Q=(Q_0, Q_1, s, t)$ to a quiver $Q' = (Q_0', Q_1', s', t')$ consists of functions $f_0 : Q_0 \to Q'_0$ and $f_1 : Q_1 \to Q'_1$ that respect source and target, in the sense that that $s' \circ f_1 = f_0 \circ s$ and $t' \circ f_1 = f_0 \circ t$.
Quivers and quiver morphisms form a category that we denote by $\quiv$.

A \emph{rooted tree quiver morphism} is a quiver morphism $f : Q \to Q'$, where $(Q,\sigma)$ and $(Q',\sigma')$ are rooted tree quivers, such that $f_0(\sigma) = \sigma'$.
Rooted tree quivers and rooted tree quiver morphisms form a (not full) subcategory $\rtree \subseteq \quiv$.
Note that the one-vertex rooted tree quiver $\ast$ is initial in $\rtree$, in the sense that there exists a unique rooted tree quiver morphism $u_\ast : \ast \to Q$ for every rooted tree quiver $Q$.

If $Q$ is a rooted tree quiver, we can consider the slice category $\rtree_{/Q}$, which has as objects the rooted tree quiver morphisms $f : T\to Q$, and as morphisms from $f : T \to Q$ to $f' : T' \to Q$ the set of morphisms $g : T \to T'$ such that $f \circ g = f'$.
We refer to any object of $\rtree_{/Q}$ as a \emph{rooted tree quiver over} $Q$.
Then, $\rtree_{/Q}$ is naturally a full subcategory of the slice category $\quiv_{/Q}$.

By a standard abuse of notation, we sometimes denote an object $(T,f_T : T \to Q)$ of a slice category simply as $T$.

\subsection{Inductive description of rooted tree quivers}
\label{section:inductive-rooted-tree-quivers}
We start by giving an inductive definition of rooted tree quivers.

\begin{figure}
    \includegraphics[width=\linewidth]{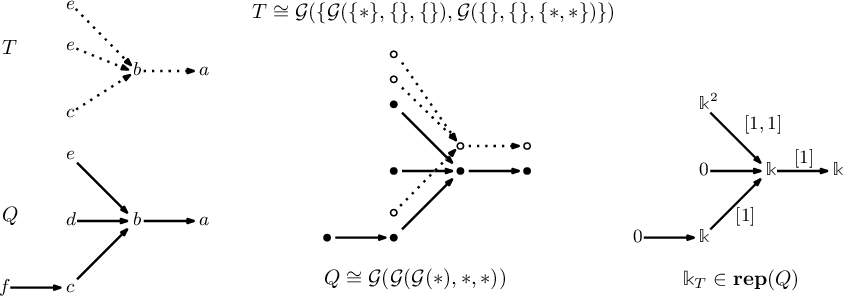}
    \caption{
        A rooted tree quiver $T$ over a rooted tree quiver $Q$ and its corresponding representation $\kbb_T \in \rep(Q)$.
        \emph{Left.}
        An illustration of $T \to Q$ given by labeling the vertices of $Q$ with distinct letters, and labeling the vertices of $T$ with the label of their image.
        \emph{Center.}
        Another illustration of $T \to Q$, as well as its construction as an inductive rooted tree quiver over an inductive rooted tree quiver.
        \emph{Right.}
        The linearization $\kbb_T \in \rep(Q)$.}
    \label{figure:inductive-tree-definitions}
\end{figure}

\begin{definition}
    \label{definition:inductive-rooted-tree-quiver}
    An \emph{inductive rooted tree quiver} is:
    \begin{itemize}[leftmargin=2cm]
        \item[(Base case)] either the quiver $\ast$ with one vertex and no edges;
        \item[(Ind.~\hspace{0.12cm}case)] or a quiver of the form $\merge(Q_1, \dots, Q_k)$, where $Q_1, \dots, Q_k$ are inductive rooted tree quivers, and where $\merge(Q_1, \dots, Q_k)$ is constructed by first taking the disjoint union of $Q_1, \dots, Q_k$, then adjoining a new vertex $\sigma$, and  adding a single edge from the root of $Q_i$ to $\sigma$, for each $1 \leq i \leq k$.
    \end{itemize}
\end{definition}

See \cref{figure:inductive-tree-definitions} for an illustration.
We refer to the operation $\merge$ as the \emph{gluing} operation.

\begin{lemma}
    \label{lemma:inductive-construction-rooted-trees}
    Every object of $\rtree$ is isomorphic to an inductive rooted tree quiver.
\end{lemma}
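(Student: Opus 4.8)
The plan is to prove this by induction on the number of vertices of a rooted tree quiver $Q \in \rtree$, exhibiting an explicit isomorphism to an inductive rooted tree quiver built via the $\merge$ operation.

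\textbf{Base case.} If $Q$ has a single vertex, then since $Q$ is a tree quiver it has no edges (an edge would create a loop or force a second vertex), so $Q \cong \ast$, which is an inductive rooted tree quiver by definition.

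\textbf{Inductive step.} Suppose $|Q_0| > 1$ and that the claim holds for all rooted tree quivers with fewer vertices. Let $\sigma$ be the root of $Q$. The key structural observation is the following: every non-root vertex $x$ of $Q$ has, by the lemmas in Section~\ref{section:tree-quivers-tree-posets}, a well-defined successor $\suc(x)$, and since $Q$ is a tree, following successors from any vertex eventually reaches $\sigma$; hence the root $\sigma$ has nonempty predecessor set $\pred(\sigma) = \{r_1, \dots, r_k\}$ (it is nonempty because, $Q$ being connected with more than one vertex, some edge must have target $\sigma$ — indeed $\sigma$ is the unique vertex of out-degree zero, so every other vertex lies on a directed path ending at $\sigma$, and the last edge of such a path has target $\sigma$). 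Moreover there is exactly one edge $\alpha_i$ from $r_i$ to $\sigma$ for each $i$: if there were two edges $r_i \to \sigma$ they would form a cycle in the underlying graph, and there is no edge $r_i \to \sigma$ for any other vertex by definition of $\pred(\sigma)$. Now let $Q_i := Q_{\leq r_i}$ be the principal downset subquiver rooted at $r_i$. I claim $Q$ is the disjoint union of the $Q_i$ together with the new vertex $\sigma$ and the edges $\alpha_i : r_i \to \sigma$; that is, $Q \cong \merge(Q_1, \dots, Q_k)$. To see the $Q_i$ are pairwise disjoint and exhaust $Q_0 \setminus \{\sigma\}$: each non-root vertex $x$ determines a unique directed path to $\sigma$, whose second-to-last vertex is some $r_i$, and then $x \leq_Q r_i$, so $x \in Q_i$; conversely if $x \in Q_i \cap Q_j$ then the directed path from $x$ to $\sigma$ would pass through both $r_i$ and $r_j$, forcing $i = j$ by uniqueness of directed paths in a tree quiver. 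A similar accounting shows every edge of $Q$ other than $\alpha_1, \dots, \alpha_k$ lies inside exactly one $Q_i$ (an edge $\beta$ with $t(\beta) \neq \sigma$ has both endpoints $\leq_Q$ some single $r_i$). Each $Q_i$ is a rooted tree quiver with strictly fewer vertices than $Q$, so by the inductive hypothesis $Q_i$ is isomorphic to an inductive rooted tree quiver; applying $\merge$ (which manifestly produces a new inductive rooted tree quiver and, by its construction, is compatible with replacing each argument by an isomorphic quiver) yields that $Q \cong \merge(Q_1, \dots, Q_k)$ is isomorphic to an inductive rooted tree quiver.

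\textbf{Main obstacle.} The only slightly delicate point is the combinatorial bookkeeping verifying that the decomposition $Q = \merge(Q_{\leq r_1}, \dots, Q_{\leq r_k})$ is complete and disjoint — i.e., that the vertices and edges of $Q$ are partitioned correctly among the $Q_i$, the root $\sigma$, and the connecting edges $\alpha_i$ — and that this is an isomorphism of rooted tree quivers (root-preserving). This follows formally from uniqueness of directed paths in a tree quiver (a consequence of the tree condition) and from the characterization of the root as the unique out-degree-zero vertex, but writing it out carefully is the bulk of the work; everything else is immediate from the definitions and the lemmas already established. Since the statement asserts only existence of such an isomorphism, and the paper flags such verifications as routine, one can present this decomposition argument compactly.
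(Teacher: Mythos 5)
Your proof is correct and follows the same strategy as the paper's: induction on the number of vertices, with the inductive step obtained by removing the root and its adjacent edges and recognizing the remaining pieces as the rooted tree quivers $Q_{\leq r_i}$ glued back via $\merge$. The extra bookkeeping you supply (disjointness and exhaustiveness of the $Q_{\leq r_i}$, uniqueness of the edges $r_i \to \sigma$) is exactly the routine verification the paper leaves implicit.
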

\begin{proof}
    Let $Q \in \rtree$.
    This is proven by induction on the number of vertices in $Q$.
    If $Q$ has a unique vertex, then it is isomorphic to the inductive rooted tree quiver with a single vertex.
    If $Q$ has more than one vertex, by removing the root and all adjacent edges from $Q$, one obtains a disjoint union of rooted tree quivers $Q_1, \dots, Q_k$, which by inductive hypothesis, must be isomorphic to inductive rooted tree quivers $Q'_1, \dots, Q'_k$, respectively.
    Then, $Q$ is isomorphic to $\merge(Q'_1, \dots, Q'_k)$.
\end{proof}

Thanks to \cref{lemma:inductive-construction-rooted-trees}, when it is convenient, we can (and do) assume that rooted tree quivers are constructed inductively using the $\merge$ operation of the result.

Rooted tree quivers over a rooted tree quiver $Q$ can also be defined inductively, as follows.

\begin{definition}
    \label{definition:indutive-rooted-tree-over}
    Let $(Q,\sigma)$ be an inductive rooted tree quiver.
    An \emph{inductive rooted tree quiver over} $Q$ is a pair $(T, f_T : T \to Q)$ such that:
    \begin{itemize}[leftmargin=2cm]
        \item[(Base case)] either $T = \ast$ and $f_T = u_\ast : \ast \to Q$;
        \item[(Ind.~\hspace{0.1cm}case)] or
              $Q = \merge(Q_1, \dots, Q_k)$, $T = \merge\left(T_1^\bullet, \dots, T_k^\bullet\right)$, and $f = \merge(f_1^\bullet, \dots, f_k^\bullet)$, where, for each $1 \leq i \leq k$, we have that $T_i^\bullet = \{T_i^1, \dots, T_i^{\ell_i}\}$ is a (possibly empty) list of rooted tree quivers,
              and $f_i^\bullet = \{f_i^1 : T_i^1 \to Q_i\,,\,\, \dots\,\,,\, f_i^{\ell_i} : T_i^{\ell_i} \to Q_i\}$ is a (possibly empty) list of rooted tree quiver morphisms.
              The rooted tree quiver~$T$ is constructed using the same operation $\merge$ as in \cref{definition:inductive-rooted-tree-quiver}, that is
              \[
                  \merge\left(T_1^\bullet, \dots T_k^\bullet\right) \coloneqq
                  \merge(T_1^1, \dots, T_1^{\ell_1}, \dots, T_k^1, \dots, T_k^{\ell_k}),
              \]
              resulting in a rooted tree quiver with root $\tau$.
              The morphism $\merge(f_1^\bullet, \dots, f_k^\bullet)$ is uniquely characterized by mapping $\tau$ to $\sigma$, and by coinciding with $f_i^j$ when restricted to $T_i^j$ for every $1 \leq i \leq k$ and $1 \leq j \leq \ell_i$.
    \end{itemize}
\end{definition}

See \cref{figure:inductive-tree-definitions} for an illustration.
In this case, we also refer to the operation $\merge$ as the \emph{gluing} operation.

\begin{lemma}
    \label{lemma:inductive-construction-rooted-trees-over}
    Let $Q$ be an inductive rooted tree quiver.
    Every object of $\rtree_{/Q}$ is isomorphic to an inductive rooted tree quiver over $Q$.
\end{lemma}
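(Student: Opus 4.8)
The plan is to mirror the proof of \cref{lemma:inductive-construction-rooted-trees}, inducting on the number of vertices of $T$, the extra work being to track how the structure morphism interacts with the gluing decomposition of $Q$. Fix $(T, f_T : T \to Q) \in \rtree_{/Q}$, write $(T,\tau)$ and $(Q,\sigma)$ for the underlying rooted tree quivers, so that $f_T(\tau) = \sigma$. If $T = \ast$, then $f_T = u_\ast$ by initiality of $\ast$ in $\rtree$, and we are in the base case. Otherwise $T$ has an edge, hence so does $Q$, so $Q \neq \ast$; since $Q$ is inductive by hypothesis, $Q = \merge(Q_1, \dots, Q_k)$ for inductive rooted tree quivers $Q_1, \dots, Q_k$ with $k \geq 1$.

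Next I would decompose $T$ along its root. Removing $\tau$ and its adjacent edges (all of which are in-edges, one for each predecessor, since $\tau$ has out-degree zero) yields the disjoint union of the rooted tree quivers $S_r \coloneqq T_{\leq r}$ for $r \in \pred(\tau)$, with $\pred(\tau) \neq \varnothing$. For each such $r$, the unique out-edge $r \to \tau$ of $r$ is sent by $f_T$ to an edge of $Q$ with target $\sigma$; the edges of $Q = \merge(Q_1,\dots,Q_k)$ into $\sigma$ are exactly the edges $\mathrm{root}(Q_i) \to \sigma$, so $f_T(r) = \mathrm{root}(Q_{i(r)})$ for a unique $i(r) \in \{1,\dots,k\}$.

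The key step is to show $f_T$ maps all of $S_r$ into $Q_{i(r)}$. Any vertex $y$ of $S_r$ satisfies $y \leq_T r$, so there is a directed path from $y$ to $r$ in $T$; applying $f_T$ gives a directed path from $f_T(y)$ to $\mathrm{root}(Q_{i(r)})$ in $Q$, whence $f_T(y) \in Q_{\leq \mathrm{root}(Q_{i(r)})} = Q_{i(r)}$, using that the components obtained by deleting $\sigma$ from $Q$ are precisely the $Q_i$; the same argument applied to edges shows $f_T$ sends edges of $S_r$ into $Q_{i(r)}$. Hence $f_T|_{S_r}$ corestricts to a root-preserving quiver morphism $g_r : S_r \to Q_{i(r)}$. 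Since $|(S_r)_0| < |T_0|$ and $Q_{i(r)}$ is an inductive rooted tree quiver, the inductive hypothesis applies: $(S_r, g_r)$ is isomorphic to an inductive rooted tree quiver over $Q_{i(r)}$.

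Finally I would reassemble. For each $1 \leq i \leq k$, let $T_i^\bullet$ be the (possibly empty) list of those inductive rooted tree quivers over $Q_i$ obtained, for $r$ ranging over $\{r \in \pred(\tau) : i(r) = i\}$, from the previous step, and let $f_i^\bullet$ be the corresponding list of morphisms. Then $(T, f_T)$ is isomorphic to $\merge(T_1^\bullet, \dots, T_k^\bullet)$ with structure morphism $\merge(f_1^\bullet, \dots, f_k^\bullet)$: on underlying quivers this is the identification of \cref{lemma:inductive-construction-rooted-trees} applied to $T$, and the two structure morphisms agree because both send the new root $\tau$ to $\sigma$ and restrict to (an isomorphic copy of) $g_r$ on each component $S_r$. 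By \cref{definition:indutive-rooted-tree-over} this exhibits $(T,f_T)$ as (isomorphic to) an inductive rooted tree quiver over $Q$. I expect the main obstacle to be the key step: checking that $f_T$ respects the component decomposition, which is exactly where the tree structure of $Q$ (uniqueness of directed paths, and $Q_{\leq \mathrm{root}(Q_i)} = Q_i$) is used; the rest is bookkeeping with the gluing operation.
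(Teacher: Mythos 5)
Your proof is correct and follows the same route as the paper's: induct on the number of vertices of $T$, remove the root to split $T$ into the components $T_{\leq r}$ for $r\in\pred(\tau)$, check that $f_T$ sends each component into a single $Q_{i(r)}$, apply the inductive hypothesis, and reassemble with $\merge$. The only difference is that you spell out the "straightforward to see" step (that $f_T$ respects the component decomposition, via uniqueness of directed paths and $Q_{\leq \mathrm{root}(Q_i)} = Q_i$), which the paper leaves to the reader.
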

\begin{proof}
    The proof proceeds by induction on $Q$ and on the number of vertices of $T$, and is very similar to that of \cref{lemma:inductive-construction-rooted-trees}.
    If $T$ has a single vertex, then $(T, f : T \to Q)$ is isomorphic to $(\ast, u_\ast : \ast \to Q)$.
    Otherwise, $Q = \merge(Q_1, \dots, Q_k)$.
    Let $T\setminus \tau$ be obtained by removing the root and all adjacent edges from $T$.
    It is straightforward to see that, for each connected component $C$ of $T\setminus\tau$, the restriction $f|_C : C \to Q$ factors as a rooted tree quiver morphism $f_C : C \to Q_{i_C}$, for some $1 \leq i_C \leq k$, followed by the inclusion $Q_i \to Q$, and that the gluing of the rooted tree quivers $(C, f_C : C \to Q_i)$ for all $C$ is isomorphic to $(T, f_T : T \to Q)$.
    By inductive hypothesis, each rooted tree quiver $(C, f_C : C \to Q_i)$ is isomorphic to an inductive rooted tree quiver, so $(T,f_T : T \to Q)$ is isomorphic to an inductive rooted tree quiver over $Q$.
\end{proof}

We can use induction to give a useful characterization of morphisms in $\rtree_{/Q}$.

\begin{lemma}
    \label{remark:inductive-characterization-morphisms}
    Let $Q$ be an inductive rooted tree quiver and let $(S, f_S : S \to Q)$ and $(T, f_T : T \to Q)$ be inductive rooted tree quivers over $Q$.
    Then we are in one of the following three cases:
    \begin{enumerate}[leftmargin=2cm]
        \item If $T = \ast$ but $S \neq \ast$, then $\hom_{\rtree_{/Q}}(S, T) = \emptyset$.
        \item If $S = \ast$, then $\hom_{\rtree_{/Q}}(S, T) = \{u_\ast : \ast \to T\}$.
        \item Otherwise, let $Q = \merge(Q_1, \dots, Q_k)$, $S = \merge(S_1^\bullet, \dots, S_k^\bullet)$, and $T = \merge(T_1^\bullet, \dots, T_k^\bullet)$, with $S_i^\bullet = \{S_i^1, \dots, S_i^{\ell_i}\}$ and $T_i^\bullet = \{T_i^1, \dots, T_i^{m_i}\}$.
              Then $\hom_{\rtree_{/Q}}(S,T)$ is in bijection with the set of pairs $(\nu,g)$, where $\nu$ is a function taking as input $1 \leq i \leq k$ and $1 \leq j \leq \ell_i$, and returning $1 \leq \nu(i,j) \leq m_i$; and $g$ is a function taking as input $1 \leq i \leq k$ and $1 \leq j \leq \ell_i$, and returning a morphism $g_i^j \in \hom_{\rtree_{/Q_i}}(S_i^j, T_i^{\nu(i,j)})$.
    \end{enumerate}
    The bijection in $(3)$ is given as follows.
    Given a morphism $g : S \to T$ in $\rtree_{/Q}$, restrict it to $S$ minus its root to get, for each $1 \leq i \leq k$ and every $1 \leq j \leq \ell_i$, an index $1 \leq \nu(i,j) \leq m_i$ and a morphism $g_i^j : S_i^j \to T_i^{\nu(i,j)}$.

     In particular, in case (3) we get the following recursive formula for counting morphisms in $\rtree_{/Q}$:
    \[
     \left|\hom_{\rtree_{/Q}}(S,T)\right|  =  \prod_{i=1}^k \prod_{j = 1}^{\ell_i} \sum_{n = 1}^{m_i} \left|\hom_{\rtree_{Q_i}}(S_i^j, T_i^n)\right|.
      \]
\end{lemma}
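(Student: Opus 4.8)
The plan is to prove \cref{remark:inductive-characterization-morphisms} by a straightforward case analysis following the inductive definition of rooted tree quivers over $Q$, with the main content being the bijection in case (3). First I would dispose of cases (1) and (2): if $T = \ast$, then the root of $S$ must map to the root of $T$, but if $S$ has more than one vertex its root has a nonempty set of predecessors, and no morphism in $\rtree_{/Q}$ can exist because a rooted tree quiver morphism is determined on vertices in a way compatible with $\suc$, forcing those predecessors to map somewhere in $T$ that does not exist; hence $\hom_{\rtree_{/Q}}(S,T) = \emptyset$. If $S = \ast$, then since $\ast$ is initial in $\rtree$ (noted just after the definition of $\rtree$) and the unique morphism $u_\ast : \ast \to T$ automatically satisfies $f_T \circ u_\ast = u_\ast : \ast \to Q$ by initiality of $\ast$ in $\rtree$, the hom-set is the singleton $\{u_\ast\}$.

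For case (3), I would argue as follows. Write $\tau_S$ and $\tau_T$ for the roots of $S$ and $T$. Any $g \in \hom_{\rtree_{/Q}}(S,T)$ must send $\tau_S$ to $\tau_T$, since both map (under $f_S$, $f_T$) to the root $\sigma$ of $Q$, and $\tau_T$ is the unique vertex of $T$ over $\sigma$ (by construction of $T = \merge(T_1^\bullet, \dots, T_k^\bullet)$, only the root lies over $\sigma$; all other vertices lie over vertices of some $Q_i$). Removing $\tau_S$ and its adjacent edges from $S$ yields the disjoint union of the $S_i^j$, and similarly for $T$. A morphism $g$ must carry each connected component $S_i^j$ into a single connected component of $T \setminus \tau_T$; moreover, since $g$ commutes with $f_S, f_T$ and $S_i^j$ lies over $Q_i$, the image component must lie over $Q_i$ as well, hence is one of $T_i^1, \dots, T_i^{m_i}$; this defines $\nu(i,j) \in \{1, \dots, m_i\}$. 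The restriction $g|_{S_i^j} : S_i^j \to T_i^{\nu(i,j)}$ is root-preserving (the root of $S_i^j$ is a predecessor of $\tau_S$, so its image is a predecessor of $\tau_T$ lying in $T_i^{\nu(i,j)}$, which must be the root of $T_i^{\nu(i,j)}$) and commutes with the maps to $Q_i$, hence is an element $g_i^j \in \hom_{\rtree_{/Q_i}}(S_i^j, T_i^{\nu(i,j)})$. Conversely, given data $(\nu, g)$ as specified, one reconstructs $g : S \to T$ by sending $\tau_S \mapsto \tau_T$ and gluing the $g_i^j$; the edge from the root of $S_i^j$ to $\tau_S$ is sent to the edge from the root of $T_i^{\nu(i,j)}$ to $\tau_T$, and one checks this is a well-defined rooted tree quiver morphism over $Q$. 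These two assignments are mutually inverse essentially by construction, giving the bijection.

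Finally, the recursive counting formula is an immediate bookkeeping consequence of the bijection in (3): a pair $(\nu, g)$ is a choice, independently for each $1 \leq i \leq k$ and each $1 \leq j \leq \ell_i$, of an index $n = \nu(i,j) \in \{1, \dots, m_i\}$ together with a morphism in $\hom_{\rtree_{/Q_i}}(S_i^j, T_i^n)$, so the number of such pairs is $\prod_{i=1}^k \prod_{j=1}^{\ell_i} \sum_{n=1}^{m_i} |\hom_{\rtree_{/Q_i}}(S_i^j, T_i^n)|$.

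I expect the main obstacle to be purely expository: carefully justifying that a morphism in $\rtree_{/Q}$ necessarily respects the decomposition into components lying over the respective $Q_i$ — i.e., that commutativity with $f_S$ and $f_T$ forces the ``index-preserving'' property $S_i^j \mapsto T_i^{\nu(i,j)}$ — and that the root-preservation condition propagates down to each restriction $g_i^j$. Both facts hinge on the observation that in a rooted tree quiver, each non-root vertex has a unique successor, so a quiver morphism is rigidly determined by tracking successors; making this precise without excessive notation is the only delicate point, and none of it is genuinely hard.
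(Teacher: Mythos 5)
Your proposal is correct and follows essentially the same route as the paper: cases (1) and (2) are dispatched as straightforward, and case (3) rests on the observation that each $S_i^j$ is connected and lies over $Q_i$, so commutativity with the structure maps to $Q$ forces its image into a single $T_i^{\nu(i,j)}$, with the counting formula then an immediate consequence. The paper's own proof is considerably terser (it leaves the root-tracking and the inverse construction as a "routine check"), so your write-up is just a more detailed version of the same argument.
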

\begin{proof}
    The cases (1) and (2) are straightforward.
    For case (3), note that, for each $1 \leq i \leq k$ and $1 \leq j \leq \ell_i$, the quiver $S_i^j$ is connected, so its image under $g$ must lie entirely in $T_i^n$ for some $1 \leq n \leq m_i$; so that we can only have $\nu(i,j) = n$.
    This shows that the bijection described in the statement is a well-defined function.
    The fact that it is a bijection is a routine check.
    The formula then follows directly from (3).
\end{proof}

\subsection{The preorder on rooted tree quivers over a rooted tree quiver}
Let $Q$ be a rooted tree quiver.
We now define a preorder on the (isomorphism classes of) objects of $\rtree_{/Q}$.
This is a generalization of an order introduced by Kinser only on a certain subset of the rooted tree quivers over $Q$ (the reduced ones, defined below).
Since the definition is isomorphism invariant, we assume that all rooted tree quivers involved are inductive, thanks to \cref{lemma:inductive-construction-rooted-trees,lemma:inductive-construction-rooted-trees-over}.

\begin{definition}
    \label{definition:order-on-trees}
    Let $S$ and $T$ be rooted tree quivers over $Q$.
    Let $S \preceq_Q T$ if and only if:
    \begin{itemize}[leftmargin=2cm]
        \item[(Base case)] either $S = \ast$;
        \item[(Ind.~\hspace{0.1cm}case)] or the following holds.
              We have $Q = \merge(Q_1, \dots, Q_k)$, $S = \merge(S_1^\bullet, \dots, S_k^\bullet)$, $T = \merge(T_1^\bullet, \dots, T_k^\bullet)$,
              with $S_i^\bullet = \{S_i^1, \dots, S_i^{\ell_i}\}$ and $T_i^\bullet = \{T_i^1, \dots, T_i^{m_i}\}$,
              and for every $1 \leq i \leq k$ and every $1 \leq j \leq \ell_i$, there exists $1 \leq n \leq m_i$ such that $S_i^j \preceq_{Q_i} T_i^n$.
    \end{itemize}
\end{definition}

We now define the reduced rooted tree quivers over a rooted tree quiver, originally introduced by Kinser.
Since this notion is isomorphism-invariant, we again use \cref{lemma:inductive-construction-rooted-trees,lemma:inductive-construction-rooted-trees-over} and only consider inductive rooted tree quivers.

\begin{definition}[{cf.~\cite[Definition~7]{K10}}]
    \label{definition:reduced}
    A rooted tree quiver $T$ over a rooted tree quiver $Q$ is \emph{reduced} if:
    \begin{itemize}[leftmargin=2cm]
        \item[(Base case)] either $T = \ast$;
        \item[(Ind.~\hspace{0.1cm}case)] or the following holds.
              We have $Q = \merge(Q_1, \dots, Q_k)$, $T = \merge(T_1^\bullet, \dots, T_k^\bullet)$,
              with $T_i^\bullet = \{T_i^1, \dots, T_i^{\ell_i}\}$ a (possibly empty) list of reduced rooted tree quivers over $Q_i$ for every $1 \leq i \leq k$.
              And, moreover, for every $1 \leq i \leq k$ and every $1 \leq j,j' \leq \ell_i$, if $j \neq j'$, the rooted tree quivers $T_i^j$ and $T_i^{j'}$ over $Q_i$ are incomparable with respect to $\preceq_{Q_i}$.
    \end{itemize}
\end{definition}

\subsection{Linearization of rooted tree quivers over a rooted tree quiver}
Any quiver morphism $f: Q' \to Q$ induces a \emph{push-forward} functor $f_* : \rep(Q') \to \rep(Q)$,
where $f_*(V)$ is obtained as follows for each vertex $x \in Q$ and every arrow $\alpha \in Q$:
\[
    f_*(V)_x = \bigoplus_{y\in f_0^{-1}(x)}V_y \qquad \qquad \qquad  f_*(V)_\alpha = \sum_{\beta\in f_1^{-1}(\alpha)}V_\beta
\]

\begin{definition}
    \label{definition:linearization}
    Let $Q$ be a quiver and let $(T, f_T : T \to Q)$ be a quiver over $Q$.
    The \emph{linearization} of $T$, denoted $\kbb_T \in \rep(Q)$, is defined by $\kbb_T \coloneq (f_T)_*(\mathbb{1}_{Q'})$, the push-forward by $f_T$ of the constant representation $\mathbb{1}_{Q'} \in \rep(Q')$.
\end{definition}

See \cref{figure:inductive-tree-definitions} for an example.

\begin{definition}
    \label{definition:linearized-rooted-tree-quiver}
    Let $Q$ be a rooted tree quiver.
    A \emph{linearized rooted tree quiver} over $Q$ is any representation of $Q$ isomorphic to $\kbb_T$ for $T \to Q$ a rooted tree quiver over $Q$.
\end{definition}

If $Q$ is a rooted tree quiver, linearization of rooted tree quivers over $Q$ can be extended to a functor
$\rtree_{/Q} \to \rep(Q)$.
In order to describe this, we first introduce a gluing construction for representations of rooted tree quivers; this is a particular case of the construction in \cite[Section~2]{ringel-2}.
In this case too, we refer to the operation $\merge$ as the \emph{gluing} operation.

\begin{definition}
    \label{definition:gluing-modules}
    Let $(Q_1,\sigma_1), \dots, (Q_k,\sigma_k)$ be rooted tree quivers.
    \begin{itemize}
    \item Let $M_1^\bullet, \dots, M_k^\bullet$ be (possibly empty) lists such that $M_i^\bullet = \{M_i^1, \dots, M_i^{\ell_i}\}$ is a list of representations of $Q_i$, with the property that they take the value $\kbb$ at the root of $Q_i$.
    Define a representation $M = \Gcal(M_1^\bullet, \dots, M_k^\bullet)$ of $(Q,\sigma) = \merge(Q_1, \dots, Q_k)$ as follows:
    $M(\sigma) = \kbb$, $M|_{Q_i} = \bigoplus_{1 \leq j \leq \ell_i} M_i^j$, and the structure morphism $M_i^j(\sigma_i) \to M(\sigma)$ is the identity $\kbb \to \kbb$, for all $1 \leq i \leq k$.
    \item Moreover, let $N_1^\bullet, \dots, N_n^\bullet$ be (possibly empty) lists such that $N_i^\bullet = \{N_i^1, \dots, N_i^{m_i}\}$ is a list of representations of $Q_i$, with the property that they take the value $\kbb$ at the root of $Q_i$.
    Let us also be given a list of morphisms $\{g_1, \dots, g_k\}$ such that
    \[
        g_i : \bigoplus_{1\leq a \leq m_i} N_i^a \to \bigoplus_{1 \leq b \leq \ell_i} M_i^b.
    \]
    Define $\Gcal(g_1, \dots, g_k) : \Gcal(N_1^\bullet, \dots, N_k^\bullet) \to \Gcal(M_1^\bullet, \dots, M_k^\bullet)$ as the only morphism that restricts to $\sum g_i$ on $Q$ minus the root.
    \end{itemize}
\end{definition}

The following result says that gluing rooted tree quivers and then linearizing is the same as linearizing and then gluing.
The proof is straightforward.

\begin{lemma}
    \label{lemma:merge-linearize}
    Let $Q = \merge(Q_1, \dots, Q_k)$ and $T = \merge(T_1^\bullet, \dots, T_k^\bullet)$, with $T_i^\bullet = \{T_i^1, \dots, T_i^{\ell_i}\}$.
    Define $M_i^\bullet = \{\kbb_{T_i^1}, \dots, \kbb_{T_i^{\ell_i}}\}$.
    Then, $\kbb_{T} \cong \Gcal(M_1^\bullet, \dots, M_k^\bullet)$.
    \qed
\end{lemma}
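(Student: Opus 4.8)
The plan is to prove the isomorphism directly and componentwise: compute both sides at every vertex and along every arrow of $Q$, and observe that the canonical identifications assemble into an isomorphism of representations. Following \cref{definition:inductive-rooted-tree-quiver,definition:indutive-rooted-tree-over}, write $Q = \merge(Q_1, \dots, Q_k)$ with new root $\sigma$ and, for each $i$, a new arrow $\gamma_i$ from the root $\sigma_i$ of $Q_i$ to $\sigma$; write $T = \merge(T_1^1, \dots, T_1^{\ell_1}, \dots, T_k^1, \dots, T_k^{\ell_k})$ with new root $\tau$ and, for each pair $(i,j)$, a new edge $e_i^j$ from the root of $T_i^j$ to $\tau$; and recall that $f_T$ sends $\tau$ to $\sigma$ and restricts on each $T_i^j$ to $f_{T_i^j} : T_i^j \to Q_i$ followed by the inclusion $Q_i \hookrightarrow Q$.

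First I would record a fact that is in any case needed for $\Gcal(M_1^\bullet, \dots, M_k^\bullet)$ to be defined: $(\kbb_{T_i^j})_{\sigma_i} = \kbb$ for all $i,j$. Indeed, if a vertex $v$ of $T_i^j$ is sent to $\sigma_i$ and is not the root of $T_i^j$, then $v$ has an outgoing edge, which $f_{T_i^j}$ must send to an outgoing edge at $\sigma_i$, contradicting that $\sigma_i$ has out-degree $0$; hence $f_{T_i^j}^{-1}(\sigma_i)$ is the single vertex given by the root of $T_i^j$, and unwinding \cref{definition:linearization} gives $(\kbb_{T_i^j})_{\sigma_i} = \kbb$. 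Next I would compute $\kbb_T$ on vertices. Since the image of each $T_i^j$ under $f_T$ lies in $(Q_i)_0 \subseteq Q_0 \setminus \{\sigma\}$, we get $f_T^{-1}(\sigma) = \{\tau\}$ and so $(\kbb_T)_\sigma = \kbb$, matching $\Gcal(M_1^\bullet, \dots, M_k^\bullet)_\sigma$. For a vertex $x$ of $Q_i$, $f_T^{-1}(x) = \bigsqcup_{1 \le j \le \ell_i} f_{T_i^j}^{-1}(x)$, because vertices of $T_{i'}^{j'}$ with $i' \neq i$ land in $Q_{i'}$, which is disjoint from $Q_i$; hence $(\kbb_T)_x \cong \bigoplus_j (\kbb_{T_i^j})_x$, which is exactly $\Gcal(M_1^\bullet, \dots, M_k^\bullet)_x$ by \cref{definition:gluing-modules}.

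The arrow computation is analogous. For an arrow $\alpha$ of $Q_i$, the edge fibre $(f_T)_1^{-1}(\alpha) = \bigsqcup_j (f_{T_i^j})_1^{-1}(\alpha)$, so $(\kbb_T)_\alpha$ is the direct sum of the maps $(\kbb_{T_i^j})_\alpha$, matching $\Gcal(\dots)_\alpha$. For $\gamma_i$, the edges of $T$ lying over $\gamma_i$ are exactly $e_i^1, \dots, e_i^{\ell_i}$, and each carries the identity $\kbb \to \kbb$ in $\mathbb{1}_T$; summing them realises $(\kbb_T)_{\gamma_i} : \bigoplus_j \kbb \to \kbb$ as the codiagonal map, which is precisely the structure morphism of $\Gcal(\dots)$ along $\gamma_i$ prescribed in \cref{definition:gluing-modules}. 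Collecting, at each vertex, the canonical identifications used above yields the desired isomorphism $\kbb_T \cong \Gcal(M_1^\bullet, \dots, M_k^\bullet)$; compatibility along arrows is exactly what the arrow computation verified.

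I do not anticipate a genuine obstacle: the one point that merits care is the preliminary observation that linearizations of rooted tree quivers over the $Q_i$ take the value $\kbb$ at the corresponding root — without it the right-hand side is not even well defined — together with the bookkeeping of the fibres of $f_T$ over vertices and arrows. Everything else is mechanical, which is why the paper calls the proof straightforward.
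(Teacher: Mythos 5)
Your proof is correct and is exactly the direct verification the paper intends when it declares the proof straightforward and omits it: you unwind the push-forward on vertex and arrow fibres and match it against \cref{definition:gluing-modules}. The one non-trivial point you rightly isolate — that $f_{T_i^j}^{-1}(\sigma_i)$ is the single root of $T_i^j$, so $(\kbb_{T_i^j})_{\sigma_i}=\kbb$ and the gluing on the right-hand side is well defined — is handled correctly.
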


As is usual, in the next definition we assume that all rooted tree quivers are inductive.

\begin{definition}
    \label{definition:linearization-functor}
    The \emph{linearization functor} $\Lcal : \rtree_{/Q} \to \rep(Q)$ is defined on objects inductively, as follows
    \begin{itemize}[leftmargin=2cm]
        \item Let $\Lcal(\ast)$ be the representation of $Q$ that is $\kbb$ at the root and zero elsewhere.
        \item Let $Q = \merge(Q_1, \dots, Q_k)$, and $T = \merge(T_1^\bullet, \dots, T_k^\bullet)$, with $T_i^\bullet = \{T_i^1, \dots, T_i^{\ell_i}\}$.
        Define
        \[
            \Lcal(T) = \Gcal\left(\bigoplus_{1 \leq j \leq \ell_1} \Lcal(T_1^j), \dots, \bigoplus_{1 \leq j \leq \ell_k} \Lcal(T_k^j) \right),
        \]
        where we used the inductive hypothesis and the gluing operation for representations.
    \end{itemize}
    The functor $\Lcal$ is defined on a morphism $g : S \to T$ of $\rtree_{/Q}$ also inductively:
    \begin{itemize}[leftmargin=2cm]
        \item If $S = \ast$, the representation $\Lcal(S)$ is $\kbb$ at the root and zero elsewhere, and the morphism $\Lcal(S) \to \Lcal(T)$ is defined to be the identity $\kbb$ at the root and zero elsewhere.
        \item If $S \neq \ast$, then $T \neq \ast$, since there would otherwise not be any morphism $S \to T$.
              Let $Q = \merge(Q_1, \dots, Q_k)$, $S = \merge(S_1^\bullet, \dots, S_k^\bullet)$, and $T = \merge(T_1^\bullet, \dots, T_k^\bullet)$, with $S_i^\bullet = \{S_i^1, \dots, S_i^{\ell_i}\}$ and $T_i^\bullet = \{T_i^1, \dots, T_i^{m_i}\}$.
              Restrict $g : S \to T$ to $T$ minus its root, to get, for each $1 \leq i \leq k$ and $1 \leq j \leq \ell_i$, a $1 \leq n \leq m_i$ and a morphism $g_i^j : S^j_i \to T^n_i$, as in \cref{remark:inductive-characterization-morphisms}(3).
              By inductive hypothesis, we get morphisms $\Lcal(g_i^j) : \Lcal(S^j_i) \to \Lcal(T^n_i)$ in $\rep(Q_i)$, which we glue using \cref{definition:gluing-modules}(2) to get a morphism $\Lcal(S) \to \Lcal(T)$.
    \end{itemize}
\end{definition}

The following is proven by a straightforward induction.

\begin{lemma}
    \label{lemma:L-is-push-forward}
    Let $Q$ be a rooted tree quiver, and let $T \to Q$ be a rooted tree quiver over $Q$.
    Then $\kbb_T \cong \Lcal(T) \in \rep(Q)$.
    \qed
\end{lemma}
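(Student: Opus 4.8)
The plan is to prove \cref{lemma:L-is-push-forward} by induction on the number of vertices of $T$, using the inductive presentations of rooted tree quivers and of rooted tree quivers over $Q$ supplied by \cref{lemma:inductive-construction-rooted-trees,lemma:inductive-construction-rooted-trees-over}. The two sides of the claimed isomorphism are built by completely parallel recursions — push-forward of a constant representation on one side, the gluing operation $\Gcal$ on the other — so the whole point is to line the recursions up, and \cref{lemma:merge-linearize} is precisely the bridge that lets me do this in the inductive step.

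First I would treat the base case $T = \ast$, where $f_T = u_\ast : \ast \to Q$ sends the unique vertex of $\ast$ to the root $\sigma$ of $Q$. Unwinding \cref{definition:linearization}, $\kbb_\ast = (u_\ast)_*(\mathbb{1}_\ast)$ has $(u_\ast)_*(\mathbb{1}_\ast)_\sigma = \kbb$ (since $(u_\ast)_0^{-1}(\sigma)$ is a single point) and $(u_\ast)_*(\mathbb{1}_\ast)_x = 0$ for $x \neq \sigma$ (since $(u_\ast)_0^{-1}(x) = \emptyset$), with all structure maps zero for dimension reasons; this is exactly $\Lcal(\ast)$ by definition. For the inductive step, I would write $Q = \merge(Q_1, \dots, Q_k)$ and $T = \merge(T_1^\bullet, \dots, T_k^\bullet)$ with $T_i^\bullet = \{T_i^1, \dots, T_i^{\ell_i}\}$. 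Each $T_i^j$, equipped with the restriction of $f_T$ (which factors through $Q_i \hookrightarrow Q$ because $T_i^j$ is connected — the same observation used inside the proof of \cref{lemma:inductive-construction-rooted-trees-over}), is a rooted tree quiver over $Q_i$ with strictly fewer vertices than $T$, so the inductive hypothesis gives $\kbb_{T_i^j} \cong \Lcal(T_i^j)$ in $\rep(Q_i)$. By \cref{lemma:merge-linearize}, $\kbb_T \cong \Gcal(M_1^\bullet, \dots, M_k^\bullet)$ with $M_i^\bullet = \{\kbb_{T_i^1}, \dots, \kbb_{T_i^{\ell_i}}\}$, and it remains to identify this with $\Lcal(T) = \Gcal\!\big(\bigoplus_j \Lcal(T_1^j), \dots, \bigoplus_j \Lcal(T_k^j)\big)$. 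To close the loop I would record two elementary properties of $\Gcal$ from \cref{definition:gluing-modules}: it sends entrywise-isomorphic lists of representations to isomorphic representations, and replacing each list $M_i^\bullet = \{M_i^1, \dots, M_i^{\ell_i}\}$ by the single representation $\bigoplus_j M_i^j$ leaves the glued representation unchanged up to isomorphism (indeed both realizations have $\bigoplus_j M_i^j$ over $Q_i$, $\kbb$ at the root, and identity structure maps out of the roots). Combining these with the inductive hypothesis gives $\kbb_T \cong \Gcal\!\big(\bigoplus_j \kbb_{T_1^j}, \dots, \bigoplus_j \kbb_{T_k^j}\big) \cong \Gcal\!\big(\bigoplus_j \Lcal(T_1^j), \dots, \bigoplus_j \Lcal(T_k^j)\big) = \Lcal(T)$, completing the induction.

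The only mildly delicate points — and so the "main obstacle", such as it is — are bookkeeping ones: confirming that the restriction of $f_T$ to each component of $T$ minus its root genuinely lands in a single $Q_i$, and verifying the two flattening properties of $\Gcal$ on the nose (including the degenerate case where some list $M_i^\bullet$ is empty, so $\bigoplus_j M_i^j = 0$). None of this goes beyond unwinding definitions, so the proof is indeed the "straightforward induction" the statement promises; alternatively one could bypass $\Lcal$ and argue purely with $\kbb_{(-)}$, but routing through \cref{lemma:merge-linearize} keeps the recursion bookkeeping to a minimum.
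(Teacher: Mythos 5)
Your proof is correct and is exactly the ``straightforward induction'' the paper alludes to (it omits the details entirely): base case $T=\ast$, then the $\merge$ decomposition, \cref{lemma:merge-linearize}, and the flattening/entrywise-isomorphism properties of $\Gcal$ close the inductive step. No issues.
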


\subsection{Rooted tree modules}\label{sect::rooted tree modules}

Following \cite{Ri}, we first define the coefficient quiver of a representation with respect to a basis.

\begin{definition}
    \label{definition:coefficient-quiver}
    Let $Q$ be a quiver, and let $V \in \rep(Q)$.
    Given a set of bases $\Bcal = \{\Bcal_x \subseteq V_x\}_{x \in Q_0}$, define the \emph{coefficient quiver} $\Gamma(V;\Bcal)$ of $V$ with respect to $\Bcal$ as follows.
    The set of vertices of $\Gamma(V;\Bcal)$ is given by $\bigsqcup_{x \in Q_0} \Bcal_x$, the disjoint union of all basis elements in $\Bcal$, and we add an arrow from a vertex $b \in \Bcal_x$ to a vertex $b' \in \Bcal_{x'}$ for each arrow $\alpha : x \to x'$ in $Q$ such that there is a non-zero coefficient in column $b$ and row $b'$ of the matrix of $V_\alpha : Q_x \to Q_{x'}$ in the basis $\Bcal$.
\end{definition}

As an example, if $(T, f_T : T \to Q)$ is a quiver over a quiver $Q$, then
the coefficient quiver $\Gamma(\kbb_T, \Bcal)$ is isomorphic to the quiver $T$ itself, where $\kbb_T$ is the linearization of $T$ (\cref{definition:linearization}), and $\Bcal$ is the basis exhibiting each vector space $(\kbb_T)_x$ as freely generated by the vertices $(f_T)^{-1}(x) \subseteq T_0$, for $x \in Q_0$.

\begin{definition}
    \label{definition:rooted-tree-module}
    Let $Q$ be a quiver.
    A representation $V \in \rep(Q)$ is a \emph{rooted tree module} if there exists a basis for $V$ for which the coefficient quiver is a rooted tree quiver.
\end{definition}

Rooted tree modules are a special case of \emph{tree modules} \cite{Ri}, which are representations admitting a basis for which the coefficient quiver is a tree.

Recall that if $Q$ is a rooted tree quiver, and $x \in Q_0$, then $Q_{\leq x}$ is a subquiver of $Q$, which is also a rooted tree quiver.
Let $\iota : Q_{\leq x} \to Q$ denote the inclusion.

\begin{lemma}
    \label{lemma:rooted-tree-module-is-linearized-rooted-tree}
    Let $Q$ be a rooted tree quiver, and let $V \in \rep(Q)$.
    The representation $V$ is a rooted tree module if and only if there exists $x \in Q$ and $(T, f_T : T \to Q_{\leq x})$, a rooted tree quiver over $Q_{\leq x}$, such that $V \cong \iota_*(\kbb_T)$.
\end{lemma}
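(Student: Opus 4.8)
The plan is to prove both implications by carefully unwinding the definition of coefficient quiver. First I would prove the ``if'' direction: suppose $V \cong \iota_\ast(\kbb_T)$ for some rooted tree quiver $(T, f_T : T \to Q_{\leq x})$. Since $\kbb_T = (f_T)_\ast(\mathbb{1}_{Q_{\leq x}})$, the remark following \cref{definition:coefficient-quiver} tells us that $\kbb_T \in \rep(Q_{\leq x})$ has a basis $\Bcal$ whose coefficient quiver is (isomorphic to) $T$ itself, namely the basis exhibiting each $(\kbb_T)_y$ as freely generated by $(f_T)^{-1}(y)$. Pushing forward along the inclusion $\iota : Q_{\leq x} \to Q$ does not change the vector spaces or structure maps on vertices in $Q_{\leq x}$ and assigns $0$ to vertices outside; hence the same basis $\Bcal$ (regarded as a basis of $\iota_\ast(\kbb_T)$, with $\Bcal_y = \emptyset$ for $y \notin Q_{\leq x}$) has coefficient quiver still equal to $T$. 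Since $T$ is a rooted tree quiver, $V$ is a rooted tree module. Transporting the basis along the isomorphism $V \cong \iota_\ast(\kbb_T)$ finishes this direction.

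For the ``only if'' direction, suppose $V$ has a basis $\Bcal = \{\Bcal_y\}_{y \in Q_0}$ with coefficient quiver $\Gamma \coloneqq \Gamma(V;\Bcal)$ a rooted tree quiver; let $\rho \in \Gamma_0$ be its root. Define $f : \Gamma \to Q$ to be the map sending a vertex $b \in \Bcal_y$ to $y$, and an edge of $\Gamma$ coming from an arrow $\alpha : y \to y'$ of $Q$ to $\alpha$; this is a quiver morphism by construction of $\Gamma$. Let $x \coloneqq f(\rho) \in Q_0$. The key claim is that $f$ actually factors through $Q_{\leq x}$, i.e.\ $f(b) \leq_Q x$ for every $b \in \Gamma_0$: indeed, since $\Gamma$ is a rooted tree quiver with root $\rho$, there is a directed path in $\Gamma$ from $b$ to $\rho$, and applying $f$ gives a directed path in $Q$ from $f(b)$ to $x$, so $f(b) \leq_Q x$. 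Writing $T \coloneqq \Gamma$ and $f_T \colon T \to Q_{\leq x}$ the corestriction of $f$, we get a rooted tree quiver over $Q_{\leq x}$. It remains to check $V \cong \iota_\ast(\kbb_T)$. By the remark after \cref{definition:coefficient-quiver}, $\kbb_T$ is a representation of $Q_{\leq x}$ with basis whose coefficient quiver is $T$; so $\iota_\ast(\kbb_T)$ has, at each vertex $y$, a vector space freely spanned by $f_T^{-1}(y) = f^{-1}(y) = \Bcal_y$, and its structure map for $\alpha : y \to y'$ sends a basis vector $b$ to $\sum_{b'} b'$ summed over edges $b \to b'$ of $T$ lying over $\alpha$. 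The assignment $b \mapsto b$ thus gives a linear isomorphism $\iota_\ast(\kbb_T)_y \to V_y$ at each vertex; it is a morphism of representations precisely because the $(b',b)$-entry of $V_\alpha$ is nonzero iff there is an edge $b \to b'$ in $\Gamma$ over $\alpha$—here one must also invoke that $V$ being (up to the chosen basis) a linearization forces those nonzero entries to equal $1$. This last point is exactly where care is needed.

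The main obstacle I anticipate is precisely this last issue: the coefficient quiver only records \emph{which} matrix entries are nonzero, not their values, so a priori $\iota_\ast(\kbb_T)$ and $V$ need only have ``the same support pattern'' rather than being isomorphic. The resolution is to observe that since $T$ is a rooted tree quiver, each column of each structure matrix of $\kbb_T$ (equivalently $\iota_\ast(\kbb_T)$) has \emph{exactly one} nonzero entry, equal to $1$—because each non-root vertex of $T$ has out-degree exactly $1$. For $V$ the matrix $V_\alpha$ in the basis $\Bcal$ likewise has at most one nonzero entry per column $b$ (if $b \in \Bcal_y$ is not a root of the restricted subtree), but that entry could be an arbitrary nonzero scalar $c_b \in \kbb^\times$. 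One then rescales the basis: replacing each basis vector $b$ by an appropriate nonzero multiple, working outward from the root of $\Gamma$ along the tree, one can simultaneously normalize all these entries to $1$ without changing the coefficient quiver. After this normalization $V$ is literally equal (not just isomorphic) to $\iota_\ast(\kbb_T)$ in the chosen bases, giving the required isomorphism. I would isolate this rescaling argument as the technical heart of the proof, and everything else is bookkeeping with \cref{definition:coefficient-quiver} and the push-forward formula.
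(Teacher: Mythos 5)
Your proof is correct and follows essentially the same route as the paper: the ``if'' direction is immediate, and for the converse one takes the coefficient quiver itself as $T$, maps each basis vector to the vertex of $Q$ it lives over, and identifies $x$ with the image of the root. The only difference is that the normalization of the nonzero matrix entries to $1$ --- which you correctly isolate as the technical heart and prove by rescaling basis vectors outward from the root (valid because every non-root vertex of the coefficient quiver has a unique outgoing edge) --- is handled in the paper by citing Ringel's base-change result \cite[Prop.~2]{Ri} rather than by an explicit argument.
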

\begin{proof}
Of course, if $V \cong \iota_*(\kbb_T)$ as in the lemma, then $V$ is a rooted tree module. Conversely, let $x \in Q_0$ be the maximum (in the poset relation $\leq_Q$) of the vertices at which $M$ is non-zero (\cref{lemma:rooted-tree-lattice}).
    Let $\Bcal$ be a basis of $V$ such that $\Gamma(V;\Bcal)$ is a rooted tree quiver.
    As observed in \cite[Prop 2]{Ri}, there is a base change turning all the non-zero coefficients into 1, thus $V \cong \iota_*(\kbb_T)$.
\end{proof}

We now introduce, in our language, the reduced representations of Kinser.

\begin{definition}[{cf.~\cite[Definition~16]{K10}}]
    \label{definition:reduced-rooted-tree-module}
    Let $Q$ be a rooted tree quiver, and let $V \in \rep(Q)$.
    The representation $V$ is a \emph{reduced rooted tree module} if there exists $x \in Q$ and $(T, f_T : T \to Q_{\leq x})$, a reduced rooted tree quiver over $Q_{\leq x}$, such that $V \cong \iota_*(\kbb_T)$.
\end{definition}

\section{Proof of \cref{Theorem A}(\ref{equation:image-h0})}
\label{section:proof-thm-A-1}

This section contains straightforward, yet sometimes tedious, categorical arguments.
We skip some details for conciseness.
We start by recalling the definition of the path category of a quiver.

\begin{definition}
    \label{definition:path-category}
    Let $Q$ be a quiver.
    The \emph{path category} of $Q$ has as objects the set of vertices $Q_0$ of $Q$, and for $x,y \in Q_0$, as set of morphisms $\hom(x,y)$ the set of directed paths (\cref{definition:order-on-quiver}) from $x$ to $y$.
    Composition is given by composition of paths.
\end{definition}

By a standard abuse of notation, we denote the path category of a quiver $Q$ also by $Q$.

\begin{definition}
    Let $Q$ be a rooted tree quiver.
    Let $\rtrees_{/Q}$ denote the full subcategory of $\quiv_{/Q}$ of objects $(T, f_T : T \to Q)$ such that $T$ is a (potentially empty) disjoint union of rooted tree quivers $T = T_1 \sqcup \cdots \sqcup T_k$ ($k \geq 0$), and such that, for all $1 \leq i \leq k$, the restriction of $f_T$ to $T_i$ is a rooted tree quiver morphism (i.e., it is root-preserving).
\end{definition}

Let $\set$ denote the category of finite sets.
We now define a functor $\Sigma : \set^Q \to \rtrees_{/Q}$.

\begin{definition}
    Let $Q$ be a rooted tree quiver, and let $X : Q \to \set$ be a functor from the path category of $Q$.
    The vertex set of ${\Sigma X}$ is the disjoint union $\bigsqcup_{s \in Q_0}X(s)$, and there is an arrow from $a$ to $b$ in ${\Sigma X}$ whenever $X(\alpha)(a) = b$ for some arrow $\alpha$ in $Q$. 
    The function $f_{\Sigma X} : \Sigma X \to Q$ is the quiver morphism determined by $f^{-1}(s) = X(s)$ for all $s \in Q_0$.
    Given a natural transformation $g : X \Rightarrow Y$, we let $\Sigma g : \Sigma X \to \Sigma Y$ be the quiver morphism determined by mapping a vertex $x$ of $\Sigma X$ corresponding to $x \in X(s)$ to the vertex of $\Sigma Y$ corresponding to $g_s(x) \in Y(s)$.
\end{definition}

We now define a functor $\fib : \rtrees_{/Q} \to \set^Q$.

\begin{definition}
    Let $Q$ be a rooted tree quiver, and let $(T, f_T : T \to Q)$ in $\rtrees_{/Q}$.
    Define $\fib_T : Q \to \set$ by $\fib_T(x) = f_T^{-1}(x)$.
    If $\phi : x \to y$ is a morphism of the path category of $Q$, then $y = \suc^n(x)$ for some $n$.
    Then, if $s \in \fib_T(x)$, define $\fib_T(\phi)(s) = \suc^n(s)$.
\end{definition}

The following is then a straightforward check.

\begin{lemma}
    \label{proposition:set-representation-is-tree-over}
    Let $Q$ be a rooted tree quiver.
    The functors $\Sigma$ and $\fib$ are inverse equivalences of categories.
    \qed
\end{lemma}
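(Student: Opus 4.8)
The plan is to verify that $\Sigma$ and $\fib$ are mutually inverse by checking, on objects and on morphisms, that $\fib \circ \Sigma \cong \id_{\set^Q}$ and $\Sigma \circ \fib \cong \id_{\rtrees_{/Q}}$. The main conceptual point to keep in mind throughout is \cref{lemma:rooted-tree-lattice} together with the existence of the successor map: in a rooted tree quiver $Q$, any morphism $\phi : x \to y$ in the path category is the unique directed path from $x$ to $y$, and such a path exists exactly when $y = \suc^n(x)$ for a (unique) $n$. This is what makes both functors well-defined in the first place, and it is also the crux of why they are inverse. So the first thing I would do is record this observation cleanly, since it is used repeatedly.

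Next I would check $\fib_{\Sigma X} \cong X$ naturally in $X$. On objects: $\fib_{\Sigma X}(x) = f_{\Sigma X}^{-1}(x) = X(x)$ by definition of $\Sigma X$, so the vertex-sets agree on the nose. On morphisms: given an arrow $\alpha : x \to \suc(x)$ of $Q$ and $a \in X(x)$, by construction of $\Sigma X$ there is an arrow $a \to X(\alpha)(a)$ in $\Sigma X$, i.e.\ $\suc(a) = X(\alpha)(a)$ in $\Sigma X$; hence $\fib_{\Sigma X}(\alpha)(a) = \suc(a) = X(\alpha)(a)$, and by functoriality (composing along the unique path) this extends to arbitrary $\phi : x \to \suc^n(x)$. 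The identity-on-vertices natural transformation is then automatically natural in $X$ because $\Sigma g$ is defined by applying $g_s$ fiberwise. The one thing to be slightly careful about is that $\Sigma X$ is genuinely an object of $\rtrees_{/Q}$: each connected component of $\Sigma X$ has a unique vertex of out-degree zero (lying over the root $\sigma$ of $Q$, or over a vertex whose image under no arrow is defined — but since every non-root vertex of $Q$ has a successor, the out-degree-zero vertices of $\Sigma X$ lie precisely over $\sigma$), so components are rooted tree quivers and $f_{\Sigma X}$ restricts to a root-preserving morphism on each. This uses that $X$ is a functor on the \emph{path} category, so the arrows of $\Sigma X$ out of a vertex $a \in X(x)$ are exactly indexed by the single arrow of $Q$ out of $x$, giving out-degree $\leq 1$ everywhere and forcing the acyclic-with-unique-sink structure.

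Then I would check $\Sigma \fib_T \cong T$ naturally in $(T, f_T)$. The vertex set of $\Sigma \fib_T$ is $\bigsqcup_{x \in Q_0} \fib_T(x) = \bigsqcup_{x \in Q_0} f_T^{-1}(x) = T_0$, and the map to $Q$ is $f_T$ by construction, so the underlying vertex data and the structure map to $Q$ agree. For the arrows: $\Sigma \fib_T$ has an arrow $a \to b$ whenever $\fib_T(\alpha)(a) = b$ for some arrow $\alpha : x \to \suc(x)$ of $Q$ with $a \in f_T^{-1}(x)$; by definition $\fib_T(\alpha)(a) = \suc(a)$, the successor being taken in $T$. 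So $\Sigma \fib_T$ has an arrow $a \to b$ iff $b = \suc_T(a)$, which is exactly the arrow set of $T$, since $T \in \rtrees_{/Q}$ is a disjoint union of rooted tree quivers and in a rooted tree quiver every non-root vertex $a$ has exactly one out-arrow, namely $a \to \suc_T(a)$. Naturality in $T$ is the statement that for $h : (S,f_S) \to (T,f_T)$ in $\rtrees_{/Q}$ one has $\Sigma \fib_T(h) = h$ under these identifications, which is immediate from the fiberwise description of $\Sigma$ on morphisms together with the fact that $\fib_T(h)$ is just $h$ restricted to fibers. I do not expect a serious obstacle here: the only place requiring care is the bookkeeping around the successor map and the verification that $\Sigma X$ always lands in $\rtrees_{/Q}$ (and that $\fib$ does not accidentally lose the arrows, which it does not, precisely because each non-root vertex has a unique successor). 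Once those are pinned down, both composites are literally the identity on the nose on objects and on morphisms, so the equivalences are in fact isomorphisms of categories and the lemma follows.
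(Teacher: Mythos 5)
Your proposal is correct and is exactly the direct verification the paper has in mind (the paper omits the proof, labelling it ``a straightforward check''): you show both composites are the identity on vertices and arrows, and you rightly isolate the two points that actually need an argument, namely that $\Sigma X$ lands in $\rtrees_{/Q}$ (via the out-degree count forced by the unique successor in $Q$) and that $\fib$ recovers all arrows of $T$ because each non-root vertex has a unique out-arrow. Nothing is missing.
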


The linearization functor $\Lcal : \rtree_{/Q} \to \rep(Q)$ then extends readily to a functor $\Lcal : \rtrees_{/Q} \to \rep(Q)$ by $\Lcal(T) = \Lcal(T_1) \oplus \cdots \oplus \Lcal(T_k)$, where $T =  T_1 \sqcup \cdots \sqcup T_k \in \rtrees_{/Q}$.

If $F : D \to E$ is a functor and $C$ is a category, let $F_* : D^C \to E^C$ denote the functor between functor categories given by post-composition, that is $F_*(G) = F \,\circ\, G$ for $G : C \to D$.

Let $\free : \set \to \vect$ be the free vector space functor.

\begin{lemma}
    \label{lemma:linearization-is-free}
    There is a natural isomorphism of functors
    $\Lcal \cong \free_* \circ \fib : \rtrees_{/Q} \to \rep(Q)$, 
    where $\free_* : \set^Q \to \rep(Q)$.
\end{lemma}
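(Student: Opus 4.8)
The statement to prove is \cref{lemma:linearization-is-free}: a natural isomorphism $\Lcal \cong \free_* \circ \fib$ as functors $\rtrees_{/Q} \to \rep(Q)$. The plan is to first reduce to the connected case, then proceed by induction on $Q$ (using its inductive structure from \cref{definition:inductive-rooted-tree-quiver}) together with the inductive definition of $\Lcal$, and finally upgrade the pointwise/object-level isomorphism to naturality by invoking the inductive description of morphisms in \cref{remark:inductive-characterization-morphisms}. Since both $\Lcal$ and $\free_* \circ \fib$ send a disjoint union $T = T_1 \sqcup \cdots \sqcup T_k$ in $\rtrees_{/Q}$ to the direct sum of the values on the $T_i$ (for $\Lcal$ this is the definition of its extension to $\rtrees_{/Q}$; for $\free_* \circ \fib$ it is because $\fib_T(x) = \bigsqcup_i \fib_{T_i}(x)$ and $\free$ takes disjoint unions to direct sums), it suffices to construct a natural isomorphism on the full subcategory $\rtree_{/Q}$ of connected objects.

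First I would pin down $\free_* \circ \fib$ concretely: on an object $(T, f_T : T \to Q)$, $(\free_* \circ \fib)(T)_x = \free(f_T^{-1}(x))$, the vector space with basis the fiber $f_T^{-1}(x)$, and for an arrow $\alpha : x \to \suc(x)$ of $Q$ the structure map sends a basis vector $s \in f_T^{-1}(x)$ to $\suc(s) \in f_T^{-1}(\suc(x))$. This is exactly the matrix description of $\kbb_T = (f_T)_*(\mathbb{1}_T)$ in the basis $\Bcal$ of \cref{definition:linearization}, so already at the level of objects one has $\free_* \circ \fib \cong \kbb_{(-)}$, and then \cref{lemma:L-is-push-forward} gives $\kbb_T \cong \Lcal(T)$. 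However, to get a natural isomorphism I would instead argue inductively so that the comparison maps are built from the same $\merge$-gluing recipe used to define $\Lcal$ on morphisms. Base case: $T = \ast$, where both sides are $\kbb$ at the root, $0$ elsewhere, canonically. Inductive step: write $Q = \merge(Q_1, \dots, Q_k)$ and $T = \merge(T_1^\bullet, \dots, T_k^\bullet)$ with $T_i^\bullet = \{T_i^1, \dots, T_i^{\ell_i}\}$; by the inductive hypothesis we have natural isomorphisms $\eta_{T_i^j} : \Lcal(T_i^j) \xrightarrow{\sim} (\free_* \circ \fib)(T_i^j)$ over $Q_i$, and I would check that both $\Lcal(T)$ and $(\free_* \circ \fib)(T)$ decompose over $Q_i$ (i.e. after restriction along $Q_i \hookrightarrow Q$) as $\bigoplus_{j} (\text{value on } T_i^j)$, with value $\kbb$ at the root $\tau$ of $T$ and the identity structure maps from each summand's root into it. This matches the gluing construction $\Gcal$ of \cref{definition:gluing-modules} on both sides, so I can define $\eta_T = \Gcal(\bigoplus_j \eta_{T_i^j})_i$, which is an isomorphism because each $\eta_{T_i^j}$ is.

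For naturality I would take a morphism $g : S \to T$ in $\rtree_{/Q}$ and verify $\eta_T \circ \Lcal(g) = (\free_* \circ \fib)(g) \circ \eta_S$. Using \cref{remark:inductive-characterization-morphisms}, $g$ is encoded by a reindexing function $\nu$ and a family $g_i^j : S_i^j \to T_i^{\nu(i,j)}$; both $\Lcal(g)$ (by \cref{definition:linearization-functor}) and $(\free_* \circ \fib)(g)$ (unwinding the definitions of $\fib$ on morphisms and $\free_*$) are obtained by assembling the $\Lcal(g_i^j)$, resp. $(\free_*\circ\fib)(g_i^j)$, via the same $\Gcal$-gluing of morphisms from \cref{definition:gluing-modules}(2), plus the identity $\kbb \to \kbb$ at the root. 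Since $\Gcal$ on morphisms is functorial in its inputs and the inductive hypothesis gives the square commuting for each $g_i^j$, the square for $g$ commutes. The base case $S = \ast$ is immediate since both sides send the unique morphism to the canonical inclusion of $\kbb$ at the root.

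**Expected main obstacle.** The real work is purely bookkeeping: matching the two a priori different presentations of each side of the isomorphism — $\Lcal$'s recursive $\Gcal$-definition versus the explicit "basis = fiber, structure map = successor" description of $\free_* \circ \fib$ — and keeping the identifications of root vector spaces and the disjoint-union-to-direct-sum coherences straight through the induction. There is no conceptual difficulty and no delicate estimate; the paper's own phrasing ("proven by a straightforward induction", "a straightforward check") signals that the only hazard is notational, so I would present the argument compactly, spelling out the base case and the shape of the inductive gluing identification and leaving the verification of the commuting squares to the reader.
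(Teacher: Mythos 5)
Your proposal is correct and follows essentially the same route as the paper: reduce to connected objects, induct on the $\merge$-structure of $T$ using the $\Gcal$-gluing description of $\Lcal$ and the inductive hypothesis on the $T_i^j$, with the base case $T=\ast$ immediate. You additionally sketch the naturality check (via \cref{remark:inductive-characterization-morphisms}), which the paper explicitly omits; that is a harmless strengthening, not a divergence.
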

\begin{proof}
    It is sufficient to do this only for rooted tree quivers over $Q$ (as opposed to disjoint unions of these).
    We prove that, for every $(T, f_T : T \to Q)$ rooted tree quiver over $Q$ we have $\Lcal(T) \cong \free_*(\fib_T)$, and omit the naturality proof.
    To prove this, we proceed by induction.
    The case $T = \ast$ is immediate.
    Otherwise, let $T = \merge(T_1^\bullet, \dots, T_k^\bullet)$, with $T_i^\bullet = \{T_i^1, \dots, T_i^{\ell_i}\}$.
    We have 
    \begin{align*}
            \Lcal(T) &=
            \Gcal\left(\bigoplus_{1 \leq j \leq \ell_1} \Lcal(T_1^j), \dots, \bigoplus_{1 \leq j \leq \ell_k} \Lcal(T_k^j) \right)\\
            &\cong \Gcal\left(\bigoplus_{1 \leq j \leq \ell_1} \free_*\left(\fib_{T_1^j}\right), \dots, \bigoplus_{1 \leq j \leq \ell_k} \free_*\left(\fib_{T_k^j}\right) \right)\\
            &\cong \free_*\left(\fib_{\Gcal(T_1^\bullet, \dots, T_k^\bullet)}\right),
    \end{align*}
    where in the equality we used \cref{definition:linearization-functor}, and in the first isomorphism we used the inductive hypothesis.
    The second isomorphism is straightforward to check, using \cref{definition:gluing-modules}.
\end{proof}

The following is standard.

\begin{lemma}
    \label{lemma:homology-is-pi0}
    There is a natural isomorphism $\free \,\circ\, \pi_0 \cong H_0(-;\kbb) : \top \to \vect$.
    \qed
\end{lemma}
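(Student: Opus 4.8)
The plan is to identify both functors with the free $\kbb$-vector space on the set of path components, via the standard computation of singular homology in degree zero. Write $C_0(X;\kbb)$ for the singular $0$-chains of $X$ (the free $\kbb$-vector space on the underlying point set of $X$), $C_1(X;\kbb)$ for the singular $1$-chains, and $\partial_1 : C_1(X;\kbb) \to C_0(X;\kbb)$ for the boundary map $\partial_1(\gamma) = \gamma(1) - \gamma(0)$; then by definition $H_0(X;\kbb) = \coker(\partial_1)$.

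First I would introduce the quotient map $q_X : X \to \pi_0(X)$ sending a point to its path component and apply $\free$ to obtain a surjection $\free(q_X) : C_0(X;\kbb) \twoheadrightarrow \free(\pi_0(X))$; note that $\pi_0(X)$ is finite for $X \in \top$, so both $\free(\pi_0(X))$ and $H_0(X;\kbb)$ lie in $\vect$. The key step is to verify $\ker \free(q_X) = \im \partial_1$. The inclusion $\supseteq$ holds because $\gamma(0)$ and $\gamma(1)$ lie in the same path component for every singular $1$-simplex $\gamma$. For $\subseteq$, an element of $\ker \free(q_X)$ is a $\kbb$-linear combination of points whose coefficients sum to zero on each path component; grouping terms, such an element is a sum of differences $p - p'$ with $p, p'$ in a common path component, and each such difference equals $\partial_1(\gamma)$ for $\gamma$ a path from $p'$ to $p$. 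Passing to cokernels then yields a $\kbb$-linear isomorphism $\eta_X : H_0(X;\kbb) \cong \free(\pi_0(X))$.

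Finally I would check naturality of $\eta$: for a continuous map $h : X \to Y$ one has $q_Y \circ h = \pi_0(h) \circ q_X$ strictly, so applying $\free$ and passing to cokernels gives $\eta_Y \circ H_0(h;\kbb) = \free(\pi_0(h)) \circ \eta_X$, that is, $\eta : H_0(-;\kbb) \Rightarrow \free \circ \pi_0$ is a natural isomorphism (and hence so is its inverse). I expect no genuine obstacle here; the only mildly delicate point is the computation of $\ker \free(q_X)$, and one could alternatively simply invoke a standard reference for the degree-zero singular homology of a topological space.
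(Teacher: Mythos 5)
Your proof is correct and is exactly the standard computation that the paper invokes without detail (the paper simply labels the lemma as ``standard'' and omits the argument): identifying $H_0(X;\kbb)=\coker(\partial_1)$ with $\free(\pi_0(X))$ via the quotient $q_X\colon X\to\pi_0(X)$ and checking $\ker\free(q_X)=\im\partial_1$. The one hypothesis worth flagging explicitly is that $\top$ here denotes spaces with finitely many path components, which is what guarantees both sides land in $\vect$ --- and you do note this, so nothing is missing.
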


Let $\disc : \set \to \top$ be the functor that endows every finite set with the discrete topology.
The proof of the following result is straightforward.

\begin{lemma}
    \label{lemma:disc-pi0-is-identity}
    There is a natural isomorphism $\pi_0 \circ \disc \cong \id_{\set} : \set \to \set$.
    \qed
\end{lemma}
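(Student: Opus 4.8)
The plan is to write down an explicit natural isomorphism $\eta : \id_{\set} \Rightarrow \pi_0 \circ \disc$ and check that each of its components is a bijection and that its naturality squares commute (its inverse then gives the isomorphism in the direction stated). For a finite set $S$, I would define $\eta_S : S \to \pi_0(\disc(S))$ by sending $s \in S$ to the path-connected component of $s$ in the discrete space $\disc(S)$.

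The one substantive observation is that the path-connected components of a discrete space are exactly its singletons: since $[0,1]$ is path-connected and any continuous map from it into a discrete space is constant, no two distinct points of $\disc(S)$ are joined by a path. Hence $\eta_S$ is injective (distinct points lie in distinct components) and surjective (every component is the component of some point), so it is a bijection. Along the way this also shows $\disc(S)$ has finitely many path-connected components, so $\disc$ indeed lands in $\top$ and $\pi_0 \circ \disc$ is a well-defined endofunctor of $\set$.

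For naturality, I would take a function $g : S \to S'$ and use that $\disc(g)$ is just $g$ viewed as a map between discrete spaces, which is automatically continuous; hence $\pi_0(\disc(g))$ sends the component of $s$ to the component of $g(s)$. Chasing $s$ around the square then yields $\pi_0(\disc(g))(\eta_S(s)) = \eta_{S'}(g(s))$, so the square commutes, and since every $\eta_S$ is a bijection, $\eta$ is a natural isomorphism. There is no real obstacle here; the only mild care needed is to fix a precise set-theoretic model of $\pi_0$ of a space (the set of equivalence classes of points under the relation ``joined by a path''), and the argument is insensitive to that choice.
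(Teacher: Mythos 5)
Your proof is correct and is exactly the standard argument the paper has in mind when it omits the proof as ``straightforward'': path-components of a discrete space are singletons, so the component map $s \mapsto [s]$ gives the natural bijection. No discrepancy to report.
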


\begin{proof}[Proof of \cref{Theorem A}(\ref{equation:image-h0})]
    Consider the following diagram of categories and functors:
    \[
        \begin{tikzpicture}
            \matrix (m) [matrix of math nodes,row sep=5em,column sep=4em,minimum width=2em,nodes={text height=1.75ex,text depth=0.25ex}]
            {
                \top^Q & \set^Q  & \rtrees_{/Q}             \\
                       & \rep(Q) & \\};
            \path[line width=0.75pt, -{>[width=8pt]}]
            (m-1-1) edge [bend right] [left] node {$H_0\,\,$} (m-2-2)
            (m-1-2) edge [left] node {$\free_*$} (m-2-2)
            (m-1-3) edge [bend left] [left,above] node {$\Lcal\,\,\,\,$} (m-2-2)
            (m-1-1) edge [above] node {$(\pi_0)_*$} (m-1-2)
            (m-1-3) edge [above] node {$\fib$} (m-1-2)
            ;
        \end{tikzpicture}
    \]
    \cref{lemma:homology-is-pi0,lemma:disc-pi0-is-identity} imply that the essential image of $H_0 : \top^Q \to \rep(Q)$ is equal to the essential image of $\free_* : \set^Q \to \rep(Q)$, and \cref{proposition:set-representation-is-tree-over,lemma:linearization-is-free} imply that the essential image of $\free_*$ is equal to the essential image of $\Lcal$, which, by \cref{lemma:L-is-push-forward}, consists of all direct sums of linearized rooted tree quivers over $Q$.
\end{proof}

\section{Proofs of \cref{Theorem B} and \cref{Theorem A}(\ref{equation:add-image-h0})}

Let $Q$ be a rooted tree quiver.
In this section we partly redevelop and generalize Kinser's theory \cite{K10} using our induction methods, which simplify exposition.
We also prove the elder rule (\cref{proposition:split-off-summand}), which is the key ingredient in the proof of \cref{Theorem B} and in the algorithms of \cref{thm::algorithm}.

We give the proofs of \cref{Theorem B}, \cref{corollary:finite-representation-type}, and \cref{Theorem A}(2) in that order, since the proof of \cref{Theorem A}(2) relies on \cref{corollary:finite-representation-type}, which in turn relies on \cref{Theorem A}(1).

\begin{proposition}[{cf.~\cite[Proposition~9~and~Theorem~18]{K10}}]
    \label{proposition:order-graphmap-stablemap}
    Let $(Q,\sigma)$ be a rooted tree quiver, and let $S$ and $T$ be rooted tree quivers over $Q$.
    The following are equivalent:
    \begin{enumerate}
        \item We have $S \preceq_Q T$.
        \item There exists a morphism from $S$ to $T$ in $\quiv_{/Q}$.
        \item There exists a morphism $\phi : \kbb_S \to \kbb_T$ which is non-zero at the root, that is, such that $\phi_\sigma : (\kbb_S)_\sigma \to (\kbb_T)_\sigma$ is non-zero.
    \end{enumerate}
\end{proposition}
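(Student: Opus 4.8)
The plan is to prove the cycle of implications $(1) \Rightarrow (2) \Rightarrow (3) \Rightarrow (1)$, using throughout the inductive descriptions of rooted tree quivers over $Q$ (\cref{definition:indutive-rooted-tree-over}), of the preorder $\preceq_Q$ (\cref{definition:order-on-trees}), of morphisms in $\rtree_{/Q}$ (\cref{remark:inductive-characterization-morphisms}), and of the linearization functor $\Lcal \cong \kbb_{(-)}$ (\cref{definition:linearization-functor}, \cref{lemma:L-is-push-forward}, \cref{lemma:merge-linearize}). All three conditions are isomorphism-invariant, so we may assume $Q$, $S$, $T$ are inductive. Note first that $(2)$ as stated refers to $\quiv_{/Q}$, but since $S$ and $T$ are rooted tree quivers and any quiver morphism over $Q$ sending the root of $S$ somewhere must (by connectedness and the fact that $f_S, f_T$ are root-preserving, or by a small argument) land the root of $S$ on the root of $T$; so a morphism in $\quiv_{/Q}$ between objects of $\rtree_{/Q}$ is the same as a morphism in $\rtree_{/Q}$, and we can work with $\hom_{\rtree_{/Q}}(S,T)$ throughout.

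For $(1) \Rightarrow (2)$: induct on $S$. If $S = \ast$, then by \cref{remark:inductive-characterization-morphisms}(2) there is the morphism $u_\ast : \ast \to T$. Otherwise, write $Q = \merge(Q_1,\dots,Q_k)$, $S = \merge(S_1^\bullet,\dots,S_k^\bullet)$, $T = \merge(T_1^\bullet,\dots,T_k^\bullet)$. By \cref{definition:order-on-trees}, for each $i$ and each $1 \leq j \leq \ell_i$ there is $\nu(i,j)$ with $S_i^j \preceq_{Q_i} T_i^{\nu(i,j)}$; by the inductive hypothesis we get $g_i^j \in \hom_{\rtree_{/Q_i}}(S_i^j, T_i^{\nu(i,j)})$, and by \cref{remark:inductive-characterization-morphisms}(3) the data $(\nu,g)$ assembles into a morphism $S \to T$.

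For $(2) \Rightarrow (3)$: the linearization functor $\Lcal$ sends a morphism $g : S \to T$ to a morphism $\Lcal(g) : \Lcal(S) \to \Lcal(T)$, i.e., by \cref{lemma:L-is-push-forward}, to $\phi : \kbb_S \to \kbb_T$. One checks by the same induction (following the definition of $\Lcal$ on morphisms in \cref{definition:linearization-functor}) that $\phi$ is the identity $\kbb \to \kbb$ at the root $\sigma$: in the base case $\Lcal(u_\ast)$ is the identity at the root by definition, and in the inductive case the gluing construction $\Gcal$ of \cref{definition:gluing-modules}(2) is, by construction, the identity at the new root. In particular $\phi_\sigma \neq 0$. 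For $(3) \Rightarrow (1)$: induct on $S$ (equivalently on $Q$). If $S = \ast$ we are done. Otherwise write everything in $\merge$-form as above. Since $\phi : \kbb_S \to \kbb_T$ is a morphism of representations and, by \cref{lemma:merge-linearize}, $(\kbb_S)|_{Q_i} = \bigoplus_j \kbb_{S_i^j}$ with each structure map to the root being the identity on $\kbb$ (and similarly for $T$), the commutativity of $\phi$ with the edge from the root of $S_i^j$ to $\sigma$ forces, for the component of $\phi$ restricted to $S_i^j$, that at the root of $S_i^j$ it equals $\phi_\sigma \neq 0$. Thus composing with a coordinate projection $\bigoplus_n \kbb_{T_i^n} \to \kbb_{T_i^n}$ we obtain, for a suitable $n = \nu(i,j)$, a morphism $\kbb_{S_i^j} \to \kbb_{T_i^n}$ that is non-zero at the root of $S_i^j$; by the inductive hypothesis $S_i^j \preceq_{Q_i} T_i^n$, and this holds for all $i,j$, so $S \preceq_Q T$ by \cref{definition:order-on-trees}.

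The main obstacle is the bookkeeping in $(3) \Rightarrow (1)$: one must argue carefully that a morphism of the linearized representations restricts, component-by-component over each $S_i^j$, to a map into a \emph{single} summand $\kbb_{T_i^n}$ that is non-zero at that summand's root. This uses that each $S_i^j$ is connected with a distinguished source vertex (its root, whose image under $f_{S}$ is the root of $Q_i$) at which the one-dimensional space maps isomorphically up to $\sigma$, so the non-vanishing at $\sigma$ propagates down to the root of $S_i^j$; and then that, although $\phi$ restricted to $S_i^j$ a priori lands in the direct sum $\bigoplus_n \kbb_{T_i^n}$, non-vanishing at the root lets us pick out one summand where it is still non-zero at the root, which is exactly what the inductive hypothesis consumes. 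Everything else is a routine unwinding of the inductive definitions.
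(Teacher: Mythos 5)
Your proof is correct and follows essentially the same route as the paper: a joint induction over the $\merge$-decomposition, with $(1)\Rightarrow(2)$ assembled from \cref{remark:inductive-characterization-morphisms}, $(2)\Rightarrow(3)$ by applying $\Lcal$, and $(3)\Rightarrow(1)$ by restricting $\phi$ away from the root and using non-vanishing at $\sigma$. Your handling of $(3)\Rightarrow(1)$ is in fact slightly more careful than the paper's wording, since you explicitly derive, for \emph{every} pair $(i,j)$, some $n$ with $\phi_{i,j,n}$ non-zero at the root of $Q_i$ (which is what \cref{definition:order-on-trees} actually requires), and your preliminary observation that $\quiv_{/Q}$- and $\rtree_{/Q}$-morphisms coincide here is a correct point the paper leaves implicit.
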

\begin{proof}
    Since all of the conditions are isomorphism invariant, we can assume that all rooted tree quivers involved are inductive, thanks to \cref{lemma:inductive-construction-rooted-trees,lemma:inductive-construction-rooted-trees-over}.
    The equivalence $(1) \Leftrightarrow (2) \Leftrightarrow (3)$ is then proven by induction.

    If $S = \ast$ or $T = \ast$, this is clear.
    Otherwise, let $Q = \merge(Q_1, \dots, Q_k)$, $S = \merge(S_1^\bullet, \dots, S_k^\bullet)$, and $T = \merge(T_1^\bullet, \dots, T_k^\bullet)$, with $S_i^\bullet = \{S_i^1, \dots, S_i^{\ell_i}\}$ and $T_i^\bullet = \{T_i^1, \dots, T_i^{m_i}\}$.

    \smallskip

    \noindent $(1) \Rightarrow (2)$.
    Assume that $S \preceq_Q T$.
    Then, for every $1 \leq i \leq k$ and every $1 \leq j \leq \ell_i$, there exists $1 \leq n \leq m_i$ such that $S_i^j \preceq_{Q_i} T_i^n$.
    By inductive hypothesis, there exists a morphism $S_i^j \to T_i^n$ in $\quiv_{/Q_i}$.
    A morphism $S \to T$ in $\quiv_{/Q}$ is then constructed by simply combining all these morphisms, as in \cref{remark:inductive-characterization-morphisms}(3).

    \smallskip

    \noindent $(2) \Rightarrow (3)$.
    This implication does not require the inductive hypothesis, and just follows by applying the linearization functor $\Lcal : \quiv_{/Q} \to \rep(Q)$ to the quiver morphism $S \to T$ over $Q$, and using \cref{lemma:L-is-push-forward}.

    \smallskip

    \noindent $(3) \Rightarrow (1)$.
    Assume that there exists a morphism $\phi : \kbb_S \to \kbb_T$ that is non-zero at the root.
    The morphism $\phi$ induces, by restriction to $Q \setminus \sigma$, a morphism $\phi_{i,j,n} : \kbb_{S_i^j} \to \kbb_{T_i^n}$, for each $1 \leq i \leq k$, $1 \leq j \leq \ell_i$, and $1 \leq n \leq m_i$.
    Since $\phi$ is non-zero at the root, for every $1 \leq i \leq k$, there exist $1 \leq j \leq \ell_i$ and $1 \leq n \leq m_i$ such that $\phi_{i,j,n}$ is non-zero at the root of $Q_i$.
    By inductive hypothesis, this implies that $S_i^j \preceq_{Q_i} T_i^n$, and thus $S \preceq_Q T$, by definition.
\end{proof}

This characterization of the order relation on rooted tree quivers over $Q$ allows us to identify the reduced rooted tree quivers over $Q$ as those having no non-trivial endomorphisms (compare \cite[Proposition~9]{K10} for the ``only if" part):

\begin{proposition}
\label{proposition:characterization-reduced-trees}
    Let $Q$ be a rooted tree quiver and let $(T, f_T : T \to Q)$ be a rooted tree quiver over $Q$.
    Then $T$ is reduced if and only if $\hom_{\rtree_{/Q}}(T,T) = \{\id_T\}$.
\end{proposition}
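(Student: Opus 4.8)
The plan is to prove the biconditional by induction on the number of vertices of $T$, using the combinatorial description of morphisms in $\rtree_{/Q}$ from \cref{remark:inductive-characterization-morphisms} together with \cref{proposition:order-graphmap-stablemap}, which identifies the preorder $\preceq_Q$ with the existence of morphisms in $\quiv_{/Q}$. As both conditions are isomorphism-invariant, we may assume, as in the proof of \cref{proposition:order-graphmap-stablemap}, that all rooted tree quivers are inductive (\cref{lemma:inductive-construction-rooted-trees,lemma:inductive-construction-rooted-trees-over}). The base case $T = \ast$ is immediate: $\ast$ is reduced and $\hom_{\rtree_{/Q}}(\ast,\ast) = \{\id_\ast\}$ by \cref{remark:inductive-characterization-morphisms}(2). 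For the inductive step write $Q = \merge(Q_1,\dots,Q_k)$ and $T = \merge(T_1^\bullet,\dots,T_k^\bullet)$ with $T_i^\bullet = \{T_i^1,\dots,T_i^{\ell_i}\}$; note $T \neq \ast$, so the relevant case of \cref{remark:inductive-characterization-morphisms} is always case $(3)$, under which $\id_T$ corresponds to the data consisting of the ``identity'' index function (sending each pair $(i,j)$ to $j$) together with the identity morphisms $\id_{T_i^j}$.

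For the ``only if'' direction, suppose $T$ is reduced, so each $T_i^j$ is reduced over $Q_i$ and the $T_i^j$ with fixed $i$ are pairwise $\preceq_{Q_i}$-incomparable. Let $g \in \hom_{\rtree_{/Q}}(T,T)$, encoded by \cref{remark:inductive-characterization-morphisms}(3) as an index function $\nu$ and morphisms $g_i^j \in \hom_{\rtree_{/Q_i}}(T_i^j, T_i^{\nu(i,j)})$. Since $\rtree_{/Q_i}$ is a full subcategory of $\quiv_{/Q_i}$, each $g_i^j$ is a morphism in $\quiv_{/Q_i}$, so \cref{proposition:order-graphmap-stablemap} gives $T_i^j \preceq_{Q_i} T_i^{\nu(i,j)}$; incomparability then forces $\nu(i,j) = j$. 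Thus each $g_i^j$ is an endomorphism of the reduced quiver $T_i^j$, hence $g_i^j = \id_{T_i^j}$ by the inductive hypothesis, so the data of $g$ agrees with that of $\id_T$ and therefore $g = \id_T$.

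For the ``if'' direction we prove the contrapositive. If $T$ is not reduced then, by the inductive clause of \cref{definition:reduced}, either (a) some $T_i^j$ is not reduced over $Q_i$, or (b) there are $i$ and $j \neq j'$ with $T_i^j \preceq_{Q_i} T_i^{j'}$. In case (a), the inductive hypothesis supplies a non-identity endomorphism $h$ of $T_i^j$; the data given by the identity index function together with $g_i^j = h$ and all other component morphisms equal to identities assembles, via \cref{remark:inductive-characterization-morphisms}(3), into an endomorphism of $T$ whose data differs from that of $\id_T$, hence is $\neq \id_T$. In case (b), \cref{proposition:order-graphmap-stablemap} yields a morphism $\psi : T_i^j \to T_i^{j'}$ in $\quiv_{/Q_i}$, which lies in $\rtree_{/Q_i}$ by fullness; the data given by the index function sending $(i,j)$ to $j'$ and every other pair $(a,b)$ to $b$, together with $g_i^j = \psi$ and all other component morphisms identities, assembles into an endomorphism of $T$ that is not $\id_T$ since its index function is not the identity. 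In either case $\hom_{\rtree_{/Q}}(T,T) \neq \{\id_T\}$, completing the induction.

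I expect the only genuinely delicate point to be the ``only if'' direction: one must know that the component morphisms $g_i^j$ can be viewed as morphisms in $\quiv_{/Q_i}$ (which is where fullness of $\rtree_{/Q_i} \hookrightarrow \quiv_{/Q_i}$ enters), so that \cref{proposition:order-graphmap-stablemap} applies and the incomparability hypothesis pins the index function down to the identity; everything else is a mechanical unwinding of the inductive definitions and of the bijection of \cref{remark:inductive-characterization-morphisms}(3).
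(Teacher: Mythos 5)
Your proposal is correct and follows essentially the same route as the paper's proof: induction over the inductive structure of $T$, using \cref{remark:inductive-characterization-morphisms}(3) to describe endomorphisms of $T$ componentwise and \cref{proposition:order-graphmap-stablemap} to translate between morphisms and the preorder $\preceq_{Q_i}$. The only cosmetic difference is that the paper phrases the forward direction via the morphism-counting formula while you unwind the bijection on the underlying data directly; both arguments are the same in substance.
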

\begin{proof}
We only consider the case $T \neq \ast$, thus  $T = \merge(T_1^\bullet, \dots, T_k^\bullet)$, with $T_i^\bullet = \{T_i^1, \dots, T_i^{m_i}\}$.
Assume that $T$ is reduced, so by Proposition \ref{proposition:order-graphmap-stablemap} there are no morphisms between $T_i^j$ and $T_i^{j'}$ for $j \neq j'.$
Therefore, in the counting formula in  Lemma \ref{remark:inductive-characterization-morphisms}, we have
 \[\sum_{n = 1}^{m_i}\left|\hom_{\rtree_{Q_i}}(T_i^j, T_i^n)\right| = 1
\]
since only $n = j$ qualifies for a non-zero set, and since $\left|\hom_{\rtree_{Q_i}}(T_i^j, T_i^j)\right| = 1 $ by inductive hypothesis.
Therefore there is only one morphism, the identity.

Conversely, assume that $T$ is not reduced.
If one of the $T_i^j$ is not reduced, it admits by inductive hypothesis a non-identity endomorphism, and so $T_i^j$ alone contributes more than~1 to the counting formula in Lemma \ref{remark:inductive-characterization-morphisms}.
If all of the $T_i^j$ are reduced, but $T$ is not, then by Proposition \ref{proposition:order-graphmap-stablemap} there is a morphism from $T_i^j$ to $T_i^{j'}$ for some $j \neq j'.$ By Lemma \ref{remark:inductive-characterization-morphisms}, this induces a non-trivial endomorphism of $T.$
\end{proof}

The next result gives a sufficient condition to split off a summand of the linearization of a gluing of reduced trees: this can be done as soon as one of the trees being glued is smaller, in Kinser's preorder, to another of the trees being glued.
For a related result (not stronger or weaker), see~\cite[Lemma~1]{KM}.

\begin{proposition}[Elder rule]
    \label{proposition:split-off-summand}
    Let $(Q, \sigma) = \merge(Q_1, \dots, Q_k)$ be an inductive rooted tree quiver, and let $T$ be an inductive rooted tree quiver over $Q$ such that $T = \merge(T_1^\bullet, \dots, T_k^\bullet)$, with $T_i^\bullet = \{T_i^1, \dots, T_i^{\ell_i}\}$.
    Suppose that there exists $1 \leq i \leq k$ and $1 \leq j,j' \leq \ell_i$, with $j \neq j'$ and $T_i^j \preceq_{Q_i} T_i^{j'}$.
    Then
    \[
        \kbb_T \cong \kbb_{S} \oplus \iota_*(\kbb_{T_i^j}),
    \]
    where $\iota : Q_i \to Q$ is the inclusion, and where $S = \merge(S_1^\bullet, \dots, S_k^\bullet)$, $S_n^\bullet = T_n^\bullet$ if $n \neq i$, and $S_i^\bullet = T_i^\bullet \setminus \{T_i^j\}$ otherwise.
\end{proposition}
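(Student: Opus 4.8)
The plan is to exhibit an explicit isomorphism from $\kbb_T$ to $\kbb_S \oplus \iota_*(\kbb_{T_i^j})$. First I would use \cref{lemma:merge-linearize} together with \cref{definition:gluing-modules} to get a concrete description of $\kbb_T$: it has value $\kbb$ at $\sigma$, its restriction to each $Q_n$ is $\bigoplus_{b=1}^{\ell_n}\kbb_{T_n^b}$, and the structure morphism along the unique edge $\sigma_n \to \sigma$ (from the root $\sigma_n$ of $Q_n$ to $\sigma$) is the \emph{summing map} $e_n \colon \bigoplus_b (\kbb_{T_n^b})_{\sigma_n} = \kbb^{\ell_n} \to \kbb$; here I use that each $\kbb_{T_n^b}$ takes the value $\kbb$ at $\sigma_n$, which follows by induction from the inductive description in \cref{definition:indutive-rooted-tree-over} (the fibre of the root is a singleton). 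Unwinding the definition of the push-forward $\iota_*$ (\cref{definition:linearization}) and of direct sums, I would similarly observe that $W := \kbb_S \oplus \iota_*(\kbb_{T_i^j})$ has, after identifying summands appropriately, the same vector space at each vertex and the same structure morphisms as $\kbb_T$, with a single exception: the summand $\iota_*(\kbb_{T_i^j})$ vanishes at $\sigma$ and along the edge $\sigma_i \to \sigma$, so the structure morphism of $W$ along that edge is the map $e_i' \colon \kbb^{\ell_i} \to \kbb$ summing all coordinates \emph{except} the one coming from $\kbb_{T_i^j}$.

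Next, since $T_i^j \preceq_{Q_i} T_i^{j'}$, \cref{proposition:order-graphmap-stablemap} provides a morphism $\psi \colon \kbb_{T_i^j} \to \kbb_{T_i^{j'}}$ in $\rep(Q_i)$ that is non-zero at the root $\sigma_i$; as both representations are $\kbb$ there, after rescaling $\psi$ we may assume $\psi_{\sigma_i} = \id_\kbb$. I would then define a morphism $h \colon \kbb_T \to W$ to be the identity at $\sigma$ and on $Q_n$ for every $n \neq i$, and on $Q_i$ to be
\[
    h|_{Q_i} \;=\; \id \;+\; \iota_{j'} \circ \psi \circ \pi_j \;\colon\; \bigoplus_{b=1}^{\ell_i}\kbb_{T_i^b} \longrightarrow \bigoplus_{b=1}^{\ell_i}\kbb_{T_i^b},
\]
where $\pi_j$ is the canonical projection onto the $j$-th summand and $\iota_{j'}$ the canonical inclusion of the $j'$-th summand; this is a morphism in $\rep(Q_i)$, being a composite and sum of morphisms of representations.

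It then remains to verify two routine points. First, $h$ is a morphism of representations of $Q$: the only commuting square that is not immediate is the one for the edge $\sigma_i \to \sigma$, and there $e_i' \circ h_{\sigma_i} = e_i$ holds because $h_{\sigma_i}$ adds the $j$-th coordinate into the $j'$-th one and fixes all others, so composing with $e_i'$ (which ignores the $j$-th coordinate) recovers the full sum $e_i$. Second, $h$ is an isomorphism: it is the identity at every vertex outside $Q_i$, and $h|_{Q_i} = \id + n$ with $n = \iota_{j'}\circ\psi\circ\pi_j$ nilpotent (indeed $n^2 = 0$, since $\pi_j \circ \iota_{j'} = 0$ as $j \neq j'$), hence invertible with inverse $\id - n$. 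This yields $\kbb_T \cong W = \kbb_S \oplus \iota_*(\kbb_{T_i^j})$. The only place calling for care is matching up the structure morphisms of $\kbb_T$ and of $W$ along the edge $\sigma_i \to \sigma$, i.e.\ chasing \cref{lemma:merge-linearize}, \cref{definition:gluing-modules}, and the definition of $\iota_*$ through one another; once that bookkeeping is done, the rest is elementary linear algebra.
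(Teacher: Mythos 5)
Your proposal is correct and takes essentially the same approach as the paper: the paper also produces the morphism $\phi\colon\kbb_{T_i^j}\to\kbb_{T_i^{j'}}$ from \cref{proposition:order-graphmap-stablemap} and writes down the explicit isomorphism as an upper/lower-triangular block matrix that is the identity everywhere except for a $\pm\phi$ in the $(j',j)$ block, verifying well-definedness only at the root. The only cosmetic differences are the direction of the map (the paper goes from $\iota_*(\kbb_{T_i^j})\oplus\kbb_S$ to $\kbb_T$, i.e.\ it writes down your inverse $\id-n$ first) and that the paper obtains $\phi_{\sigma_i}=\id$ directly by linearizing a quiver morphism rather than by rescaling.
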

\begin{proof}
    Without loss of generality, we may assume that $i = 1$, $j = 1$, and $j' = 2$.
    By \cref{proposition:order-graphmap-stablemap}, there exists a morphism $T_1^1 \to T_1^2$ in $\quiv_{/Q_1}$, which, after linearization, results in a morphism $\phi : \kbb_{T_1}^1 \to \kbb_{T_1}^2$ that is the identity $\kbb \to \kbb$ at the root of $Q_i$.
    We now give an explicit isomorphism from $\iota_*(\kbb_{T_1^1}) \oplus \kbb_{S}$ to $\kbb_T$.

    Note that such a morphism $\iota_*(\kbb_{T_1^1}) \oplus \kbb_{S} \to \kbb_T$ is completely determined by its restriction to the root $\sigma$, as well as by its restriction to $Q \setminus \sigma = \bigsqcup_{1 \leq n \leq k} Q_n$, where both modules decompose as $\bigoplus_{1 \leq m \leq k} \bigoplus_{1 \leq n \leq \ell_m} \kbb_{T_m^n}$.
    Thus, we can use block matrix notation to define the morphism as follows:
    \[
        \bordermatrix{
        & \kbb_{T_1^1}  & \kbb_{T_1^2} & \kbb_{T_1^3} & \cdots & \kbb_{T_1^{\ell_1}} & \cdots & \kbb_{T_k^1} & \cdots & \kbb_{T_k^{\ell_k}} & \kbb_{\sigma} \cr
        \kbb_{T_1^1}          & \id           & 0     &  0    & & 0 & & 0 & & 0 & 0 \cr
        \kbb_{T_1^2}          & -\phi         & \id   &  0    & & 0 & & 0 & & 0 & 0 \cr
        \kbb_{T_1^3}          & 0             & 0     &   \id & & 0 & & 0 & & 0 & 0 \cr
        \;\;\vdots & & & & \ddots & & & & & & \cr
        \kbb_{T_1^{\ell_1}}   & 0             & 0     &  0    & & \id & & 0 & & 0 & 0 \cr
        \;\;\vdots & & & & & & \ddots & & & & \cr
        \kbb_{T_k^1}          & 0             & 0     &  0    & & 0 & & \id & & 0 & 0 \cr
        \;\;\vdots & & & & & & & &  \ddots& & \cr
        \kbb_{T_k^{\ell_k}}   & 0             & 0     &   0   & & 0 & & 0 &  & \id  & 0 \cr
        \kbb_{\sigma}         & 0             & 0     &  0    &  & 0 & & 0 &  & 0 & \id
        }
    \]
    This morphism is well-defined since this only needs to be checked at the root, where it is well-defined thanks to the fact that as structure morphisms out of $\kbb_{T_1^1}$ we are using $\id - \phi$, which can be extended to the root as $0$.
    The morphism is an isomorphism since the inverse can be defined using a matrix with the same form, but with $\phi$ instead of $-\phi$.
\end{proof}

The following technical result is useful when applying the elder rule (\cref{proposition:split-off-summand}) inductively.

\begin{lemma}
    \label{lemma:merging-decomposition}
    Let $Q = \merge(Q_1, \dots, Q_k)$ be a rooted tree quiver, and let $T = \merge(T_1^\bullet, \dots, T_k^\bullet)$, with $T_i^\bullet = \{T_i^1, \dots, T_i^{\ell_i}\}$.
    Assume that there exists $1 \leq i \leq k$ and $1 \leq j \leq \ell_i$ such that $\kbb_{T_i^j} \cong \kbb_{T''} \oplus N \in \rep(Q_i)$, for $T'' \to Q_i$ a rooted tree quiver over $Q_i$.
    Then $\kbb_T = \kbb_{T'} \oplus N$, where $T' = \merge(S_1^\bullet, \dots, S_k^\bullet)$ with $S_m^\bullet = \{T_m^1, \dots, T_m^{\ell_m}\}$, except for $m = i$, where we define $S_i^\bullet = (T_i^\bullet \setminus T_i^j) \cup \{T''\}$.
\end{lemma}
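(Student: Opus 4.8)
The plan is to realize the hypothesized decomposition of $\kbb_{T_i^j}$ as an idempotent endomorphism of the glued representation $\kbb_T$, whose image is $\kbb_{T'}$ and whose kernel is $N$ (extended by zero), using the gluing description of linearizations from \cref{lemma:merge-linearize}. First I would set $M_m^\bullet = \{\kbb_{T_m^1}, \dots, \kbb_{T_m^{\ell_m}}\}$, so that $\kbb_T \cong \Gcal(M_1^\bullet, \dots, M_k^\bullet)$; then, again by \cref{lemma:merge-linearize}, the tuple of lists obtained by replacing the entry $\kbb_{T_i^j}$ of the $i$-th list with $\kbb_{T''}$ and leaving the other lists unchanged has $\Gcal$ isomorphic to $\kbb_{T'}$ (the ordering within each list being immaterial, as $\Gcal$ is defined only up to isomorphism).

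The one substantive observation is that $N$ vanishes at the root $\sigma_i$ of $Q_i$. Indeed, since $\sigma_i$ is a sink of $Q_i$ and every non-root vertex of a rooted tree quiver emits at least one edge, no non-root vertex of a rooted tree quiver over $Q_i$ can be sent to $\sigma_i$; hence $(\kbb_{T_i^j})_{\sigma_i} \cong \kbb \cong (\kbb_{T''})_{\sigma_i}$, and evaluating the isomorphism $\kbb_{T_i^j} \cong \kbb_{T''} \oplus N$ at $\sigma_i$ forces $N_{\sigma_i} = 0$ by a dimension count. Let $q : \kbb_{T_i^j} \to \kbb_{T_i^j}$ be the idempotent of $\rep(Q_i)$ with image $\kbb_{T''}$ and kernel $N$ coming from the hypothesized decomposition, and define $p : \kbb_T \to \kbb_T$ to act as $q$ on the block $\kbb_{T_i^j}$ and as the identity on every other block and at the global root $\sigma$. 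To check that $p$ is a morphism of representations, the only structure morphisms needing attention are those internal to the block $\kbb_{T_i^j}$, on which $p = q$ is a morphism of $\rep(Q_i)$ by construction, and the single new edge $\sigma_i \to \sigma$ adjoined in forming $\merge(Q_1, \dots, Q_k)$: along $\kbb_{T_i^j}$ the latter structure morphism is the identity $\kbb \to \kbb$ by \cref{definition:gluing-modules}, and $q_{\sigma_i}$ is the identity of the one-dimensional space $(\kbb_{T_i^j})_{\sigma_i}$, being an idempotent whose image $(\kbb_{T''})_{\sigma_i}$ is the whole space (this is where $N_{\sigma_i} = 0$ is used). Thus $p$ is a well-defined idempotent endomorphism of $\kbb_T$, so $\kbb_T \cong \im p \oplus \ker p$; unwinding \cref{definition:gluing-modules} identifies $\im p$ with the $\Gcal$ of the modified tuple of lists, hence with $\kbb_{T'}$, and identifies $\ker p$ with $N$ extended by zero along the inclusion $Q_i \hookrightarrow Q$, which is the representation written $N$ in the statement.

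The main --- and essentially only --- obstacle is the bookkeeping inside the gluing construction: one must verify that $p$ commutes with the structure morphism along the new edge $\sigma_i \to \sigma$, and this is exactly the step where the vanishing $N_{\sigma_i} = 0$ enters; beyond that, everything reduces to unwinding \cref{definition:gluing-modules} and keeping track of notation.
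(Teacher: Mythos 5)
Your proposal is correct and follows essentially the same route as the paper: both hinge on the observation that $N$ must vanish at the root of $Q_i$ (since the linearization of a rooted tree quiver over $Q_i$ is one-dimensional there), after which the decomposition of $\kbb_{T_i^j}$ passes through the gluing construction of \cref{definition:gluing-modules}. The paper states the resulting isomorphism $\Gcal(M_1^\bullet,\dots,M_k^\bullet) \cong \Gcal(L_1^\bullet,\dots,L_k^\bullet)\oplus N$ directly, whereas you realize it via an explicit idempotent endomorphism; this is a presentational difference only.
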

\begin{proof}
    Note that $N$ in the statement must have the property that $N(\tau_i^j) = 0$, where $\tau_i^j$ is the root of $T_i^j$.
    Let $M_i^\bullet = \{\kbb_{T_i^1}, \dots, \kbb_{T_i^{\ell_i}}\}$ (as in \cref{lemma:merge-linearize}), and let
    $L_i^\bullet = \{\kbb_{S_i^1}, \dots, \kbb_{S_i^{\ell_i}}\}$.
    Then, we have
    \[
        \kbb_T \cong \Gcal(M_1^\bullet, \dots, M_k^\bullet) \cong \Gcal(L_1^\bullet, \dots, L_k^\bullet) \oplus N \cong \kbb_{T'} \oplus N,
    \]
    where in the first and third isomorphism we used \cref{lemma:merge-linearize}, and in the second isomorphism we used the fact that $N(\tau_i^j) = 0$, so it does not interact in the gluing operation of modules (\cref{definition:gluing-modules}).
\end{proof}

\begin{lemma}
    \label{lemma:inductive-step-in-decomposition}
    Let $Q$ be a rooted tree quiver, and let $(T, f_T : T \to Q)$ be a rooted tree quiver over $Q$.
    If $T$ is not reduced, then there exists a reduced rooted tree module $N$ over $Q$ and a rooted tree quiver $T' \to Q$ over $Q$ such that $\kbb_T \cong \kbb_{T'} \oplus N$.
\end{lemma}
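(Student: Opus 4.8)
The plan is to argue by induction on the number of vertices of $T$. Since $T$ is not reduced, it cannot be $\ast$, so by \cref{lemma:inductive-construction-rooted-trees,lemma:inductive-construction-rooted-trees-over} we may write $Q = \merge(Q_1, \dots, Q_k)$ and $T = \merge(T_1^\bullet, \dots, T_k^\bullet)$ with $T_i^\bullet = \{T_i^1, \dots, T_i^{\ell_i}\}$. According to \cref{definition:reduced}, the failure of $T$ to be reduced happens in one of two ways: either some $T_i^j$ is not reduced over $Q_i$, or all the $T_i^j$ are reduced over $Q_i$ but there exist $i$ and $j \neq j'$ with $T_i^j \preceq_{Q_i} T_i^{j'}$. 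I would treat these two cases separately.

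In the first case, $T_i^j$ has strictly fewer vertices than $T$, so the inductive hypothesis applied to $(T_i^j, f_{T_i^j} : T_i^j \to Q_i)$ produces a reduced rooted tree module $N$ over $Q_i$ and a rooted tree quiver $T'' \to Q_i$ with $\kbb_{T_i^j} \cong \kbb_{T''} \oplus N$ in $\rep(Q_i)$. Feeding this decomposition into \cref{lemma:merging-decomposition} then yields $\kbb_T \cong \kbb_{T'} \oplus N$ in $\rep(Q)$, where $T' \to Q$ is the explicit rooted tree quiver over $Q$ described there and $N$ is viewed in $\rep(Q)$ by extension by zero. The remaining point is to check that this extension by zero of a reduced rooted tree module over $Q_i$ is a reduced rooted tree module over $Q$: writing $N \cong \iota'_*(\kbb_S)$ with $\iota' : (Q_i)_{\leq x} \to Q_i$ and $S$ reduced over $(Q_i)_{\leq x}$, functoriality of the push-forward together with the identification $(Q_i)_{\leq x} = Q_{\leq x}$ shows that the extension by zero equals $\iota_*(\kbb_S)$ for $\iota : Q_{\leq x} \to Q$ the inclusion, which is a reduced rooted tree module over $Q$ by \cref{definition:reduced-rooted-tree-module}.

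In the second case, I would invoke the elder rule, \cref{proposition:split-off-summand}, which gives $\kbb_T \cong \kbb_S \oplus \iota_*(\kbb_{T_i^j})$ with $\iota : Q_i \to Q$ the inclusion and $S \to Q$ the rooted tree quiver over $Q$ described there. Since $Q_i = Q_{\leq x}$ for $x$ the root of $Q_i$, and $T_i^j$ is reduced over $Q_i$, the summand $\iota_*(\kbb_{T_i^j})$ is a reduced rooted tree module over $Q$ directly by \cref{definition:reduced-rooted-tree-module}; taking $N = \iota_*(\kbb_{T_i^j})$ and $T' = S$ finishes this case.

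The two main steps are essentially the invocations of \cref{lemma:merging-decomposition} and \cref{proposition:split-off-summand}, together with dispatching on \cref{definition:reduced}. The only place that requires genuine care — and which I expect to be the main obstacle in writing a clean proof — is the bookkeeping that relates ``reduced over the principal downset $Q_i$'' to ``reduced over $Q$'', i.e.\ verifying that push-forward along $Q_i \hookrightarrow Q$ carries reduced rooted tree modules to reduced rooted tree modules; this ultimately reduces to the equality $(Q_i)_{\leq x} = Q_{\leq x}$ of principal downsets and functoriality of $(-)_*$.
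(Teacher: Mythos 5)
Your proposal is correct and follows essentially the same route as the paper's proof: induction on the inductive structure, splitting into the ``some $T_i^j$ not reduced'' case handled by the inductive hypothesis plus \cref{lemma:merging-decomposition}, and the ``comparable subtrees'' case handled by the elder rule \cref{proposition:split-off-summand}. If anything, your organization is slightly cleaner than the paper's, since you make the two cases disjoint (so that in the comparable case $T_i^j$ is already reduced and $\iota_*(\kbb_{T_i^j})$ is immediately the reduced summand) and you explicitly verify the point the paper leaves implicit, namely that pushing a reduced rooted tree module forward along $Q_i \hookrightarrow Q$ yields a reduced rooted tree module over $Q$.
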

\begin{proof}
    Without loss of generality, we may assume that $Q$ is an inductive rooted tree quiver and that $T$ is an inductive rooted tree quiver over $Q$, and proceed by induction.
    Since $T$ is not reduced we have $Q = \merge(Q_1, \dots, Q_k)$, $T = \merge(T_1^\bullet, \dots, T_k^\bullet)$, with $T_i^\bullet = \{T_i^1, \dots, T_i^{\ell_i}\}$, and there are two cases to consider.

    If there exist $1 \leq i \leq k$ and $1 \leq j,j' \leq \ell_i$ with $j \neq j'$, such that $T_i^j \preceq_{Q_i} T_i^{j'}$, then $\kbb_T$ decomposes as required, by \cref{proposition:split-off-summand}, \cref{lemma:merging-decomposition}, and inductive hypothesis.

    If there exists $1 \leq i \leq k$ and $1 \leq j \leq \ell_i$ such that $T_i^j$ is not reduced, then, by inductive hypothesis, $\kbb_{T_i^j} \cong \kbb_{T''} \oplus N$, with $N$ a reduced rooted tree module and $T'' \to Q_i$ a rooted tree quiver over $Q_i$.
    \cref{lemma:merging-decomposition} then finishes the proof.
\end{proof}

We now reprove a theorem of Kinser's, which, in our language, gives a combinatorial characterization of the indecomposable linearized rooted tree quivers over a rooted tree quiver $Q$.
Thanks to \cref{lemma:rooted-tree-module-is-linearized-rooted-tree}, it is enough to give this characterization for linearizations of rooted tree quivers over $Q$.

\begin{theorem}[{cf.~\cite[Corollary~19]{K10}}]
    \label{theorem:characterization-indecomposable}
    Let $Q$ be a rooted tree quiver, and let $(T, f_T : T \to Q)$ be a rooted tree quiver over $Q$.
    The representation $\kbb_T \in \rep(Q)$ is indecomposable if and only if $T$ is reduced.
\end{theorem}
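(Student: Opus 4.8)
The plan is to prove both directions by induction on the number of vertices of $T$, using the inductive description of rooted tree quivers over $Q$ and the characterization of the preorder $\preceq_Q$ in terms of morphisms of linearizations (\cref{proposition:order-graphmap-stablemap}).

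First, the "if" direction: suppose $T$ is reduced; I want to show $\kbb_T$ is indecomposable. The base case $T = \ast$ is clear since $\kbb_\ast$ is simple. For the inductive case, write $Q = \merge(Q_1,\dots,Q_k)$ and $T = \merge(T_1^\bullet,\dots,T_k^\bullet)$ with $T_i^\bullet = \{T_i^1,\dots,T_i^{\ell_i}\}$; by \cref{definition:reduced}, each $T_i^j$ is reduced over $Q_i$, and for fixed $i$ the $T_i^j$ are pairwise $\preceq_{Q_i}$-incomparable. I would use \cref{proposition:characterization-reduced-trees}: $T$ reduced means $\hom_{\rtree_{/Q}}(T,T) = \{\id_T\}$. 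The key point is to identify $\End_{\rep(Q)}(\kbb_T)$ with something whose only idempotents are $0$ and $1$. I would argue that any endomorphism of $\kbb_T$ which is the identity at the root is forced (via the block structure of $\kbb_T = \Gcal(\dots)$ and \cref{remant:inductive-characterization-morphisms}, combined with the incomparability of distinct $T_i^j$ and the inductive hypothesis that each $\End(\kbb_{T_i^j})$ is local with residue field $\kbb$) to be a "unipotent" endomorphism, hence invertible; and any endomorphism which is zero at the root maps $\kbb_T$ into the sub-representation supported away from the root, where again the induced maps $\kbb_{T_i^j} \to \kbb_{T_i^n}$ must vanish for $i$ fixed unless $T_i^j \cong T_i^n$. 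Concretely: a non-invertible endomorphism must be zero at the root (since $\kbb$ is a field), and then by incomparability and the inductive hypothesis it is nilpotent, so $\End(\kbb_T)$ is local and $\kbb_T$ is indecomposable. I would formalize this by showing $\End(\kbb_T) / \rad \cong \kbb$.

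Second, the "only if" direction: suppose $T$ is not reduced; I want to show $\kbb_T$ decomposes. But this is exactly the content of \cref{lemma:inductive-step-in-decomposition}: it produces a reduced rooted tree module $N$ over $Q$ (hence $N \neq \mathbb{0}$) and a rooted tree quiver $T' \to Q$ with $\kbb_T \cong \kbb_{T'} \oplus N$. Since $N$ is a nonzero direct summand and $\kbb_{T'}$ is also nonzero (its value at the root is $\kbb$), this exhibits $\kbb_T$ as a nontrivial direct sum, so $\kbb_T$ is decomposable. This direction requires essentially no new work.

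The main obstacle is the "if" direction, specifically the bookkeeping needed to show that an idempotent endomorphism of $\kbb_T = \Gcal(\bigoplus_j \kbb_{T_1^j}, \dots, \bigoplus_j \kbb_{T_k^j})$ is trivial. The subtlety is that $\kbb_T$ restricted to $Q \setminus \sigma$ is a direct sum of the $\kbb_{T_i^j}$, but the gluing at $\sigma$ (where every root maps to $\kbb$ via the identity) imposes a compatibility constraint: an endomorphism of the restriction extends to $\kbb_T$ only if it is "constant" in an appropriate sense at the roots. I would handle this by first reducing to idempotents $e$, noting $e_\sigma \in \{0,1\}$ since $\kbb$ is a field and $e_\sigma$ is idempotent; the case $e_\sigma = 1$ forces $e - \id$ to be zero at the root, which (by the same argument applied to $\id - e$) and by the incomparability hypothesis together with the inductive local-endomorphism-ring hypothesis forces $e - \id$ nilpotent, whence $e = \id$; the case $e_\sigma = 0$ similarly forces $e$ nilpotent, whence $e = 0$. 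Care is needed to package the inductive hypothesis as "$\End(\kbb_{T_i^j})$ is local with residue field $\kbb$" rather than merely "$\kbb_{T_i^j}$ is indecomposable," and to combine this with the vanishing of $\hom(\kbb_{T_i^j}, \kbb_{T_i^n})$ at the root for $j \neq n$ coming from incomparability and \cref{proposition:order-graphmap-stablemap}.
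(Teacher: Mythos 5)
Your proposal is correct and follows essentially the same route as the paper: the ``not reduced $\Rightarrow$ decomposable'' direction is delegated to \cref{lemma:inductive-step-in-decomposition}, and the ``reduced $\Rightarrow$ indecomposable'' direction is an induction showing $\End(\kbb_T)$ is local, with the ideal of endomorphisms vanishing at the root as the unique maximal ideal, off-diagonal blocks $\kbb_{T_i^j} \to \kbb_{T_i^n}$ killed at the root by incomparability via \cref{proposition:order-graphmap-stablemap}, and diagonal blocks handled by the inductive hypothesis. Your phrasing via idempotents and nilpotency is a cosmetic variant of the paper's ``every $\phi \notin I$ is invertible,'' and your remark that the inductive hypothesis must be the locality of the endomorphism ring (not mere indecomposability) is exactly the care the argument requires.
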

\begin{proof}
    If $T$ is not reduced, then it is not indecomposable, by \cref{lemma:inductive-step-in-decomposition}.
    Assume now that $T$ is reduced.
    We will prove that $\End(\kbb_T)$ is a local ring, which implies that $\kbb_T$ is indecomposable
    \cite[Corollary I.4.8 (a)]{ASS}.
    First, let $I \subseteq \End(\kbb_T)$ be the ideal of morphism $\kbb_T \to \kbb_T$ which are zero at the root.
    To prove that this ideal is the only maximal ideal of $\End(\kbb_T)$ (and thus that $\End(\kbb_T)$ is local), we prove that every element $\phi \in \End(\kbb_T) \setminus I$ is invertible.
    We proceed by induction.
    If $T$ has a single vertex, then $\kbb_T$ is clearly indecomposable.
    Otherwise, $Q = \merge(Q_1, \dots, Q_k)$, and $T = \merge(T_1^\bullet, \dots, T_k^\bullet)$, with $T_i^\bullet = \{T_i^1, \dots, T_i^{\ell_i}\}$.
    The morphism $\phi$ induces, by restriction to $Q$ minus its root $\sigma$, a morphism $\phi_{i,j,n} : \kbb_{T_i^j} \to \kbb_{T_i^n}$, for each $1 \leq i \leq k$, $1 \leq j \leq \ell_i$, and $1 \leq n \leq m_i$.
    By \cref{proposition:order-graphmap-stablemap}($3 \Rightarrow 1$), the morphism $\phi_{i,j,n}$ must be zero unless $j = n$, so it suffices to prove that $\phi_{i,j,j} : \kbb_{T_i^j} \to \kbb_{T_i^j}$ is invertible for all $1 \leq j \leq \ell_i$.
    The morphism $\phi_{i,j,j}$ is non-zero at the root of $Q_i$, since $\phi$ is non-zero at the root of $Q$, so, by inductive hypothesis, it is invertible, concluding the proof.
\end{proof}

\begin{proof}[Proof of \cref{Theorem B}]
    This follows directly from \cref{lemma:inductive-step-in-decomposition} and induction.
\end{proof}

\begin{proof}[Proof of \cref{corollary:finite-representation-type}]
    We first show that there are finitely many isomorphism classes of reduced rooted tree modules over $Q$.
    By \cref{definition:reduced-rooted-tree-module}, this is equivalent to showing that there are finitely many isomorphism classes of reduced rooted tree quivers over any given rooted tree quiver.
    This follows directly from the definition of reduced rooted tree quiver over a rooted tree quiver (\cref{definition:reduced}) and induction over rooted tree quivers over rooted tree quivers (\cref{section:inductive-rooted-tree-quivers}).

    To conclude, we must show that every representation in $\rep_{H_0}(Q)$ decomposes as a direct sum of reduced rooted tree modules over $Q$, since the reduced rooted tree modules are indecomposable (\cref{theorem:characterization-indecomposable}), and every representation in $\rep(Q)$ decomposes uniquely as a direct sum of indecomposables.
    By \cref{Theorem A}(1) and \cref{lemma:rooted-tree-module-is-linearized-rooted-tree}, every representation in $\rep_{H_0}(Q)$ is a direct sum of rooted tree modules over $Q$, so the result follows from \cref{Theorem B}.
\end{proof}

\begin{proof}[Proof of \cref{Theorem A}(\ref{equation:add-image-h0})]
    The inclusion $(\subseteq)$ follows directly from \cref{corollary:finite-representation-type}.
    To prove the inclusion $(\supseteq)$ it is sufficient to show that, for every rooted tree module $M \in \rep(Q)$, there exists representations $V \in \rep_{H_0}(Q)$ and $A \in \rep(Q)$ such that $V \cong M \oplus A$.

    By \cref{lemma:rooted-tree-module-is-linearized-rooted-tree}, there exists $x \in Q_0$ and a rooted tree quiver $T \to Q_{\leq x}$ over $Q_{\leq x}$ such that $\iota_*(\kbb_T) \cong M$, where $\iota : Q_{\leq x} \to Q$ is the inclusion.
    If $x$ is the root of $Q$, then $\kbb_T \in \rep_{H_0}(Q)$ by \cref{Theorem A}(\ref{equation:image-h0}), and $\kbb_T \cong M$, which proves the claim in this case.

    Otherwise, there exist elements $y_1, \dots, y_k \in Q$ such that
    \[
        Q_{\leq \suc(x)} = \merge(Q_{\leq x}, Q_{\leq y_1}, \dots, Q_{\leq y_k}).
    \]
    Define $S_1 = \merge(\{T\}, \emptyset, \dots, \emptyset)$ and $T_1 = \merge(\{T, T\}, \emptyset, \dots, \emptyset)$, which are rooted tree quivers over $Q_{\leq \suc(x)}$.
    By \cref{proposition:split-off-summand}, we have $\kbb_{T_1} \cong \kbb_T \oplus \kbb_{S_1} \in \rep(Q_{\leq \suc(x)})$.
    If $\suc(x)$ is the root of $Q$, then we are done.
    Otherwise, define $S_{n+1} = \merge(\{S_n\}, \emptyset, \dots, \emptyset)$ and $T_{n+1} = \merge(\{T_n\}, \emptyset, \dots, \emptyset)$ for all $1 \leq n \leq d-1$, where $d \in \Nbb$ is such that, and note that, if $\suc^d(x)$ is the root of $Q$.
    Then $\kbb_{T_d} \cong \iota_*(\kbb_T) \oplus \kbb_{S_d}$ by \cref{lemma:merging-decomposition} and induction, which finishes the proof.
\end{proof}

\section{Algorithms}

\begin{figure}
\begin{algorithm}[H]
    \caption{Decompose linearized rooted tree quiver over $Q$}
    \label{Algorithm 1}
    \SetAlgoLined
    \DontPrintSemicolon
    \SetKwInput{Input}{Input}
    \SetKwInput{Output}{Output}
    \Input{$(T,f_T : T \to Q)$ rooted tree quiver over a rooted tree quiver $Q$.}
    \Output{$S \subseteq T$, such that $\kbb_S \cong \kbb_T \in \rep(Q)$ and such that $\kbb_{C}$ is indecomposable for every connected component $C \subseteq S$.}
    $S$ $\gets$ $T$\;
    $R$ $\gets$ empty relation on vertices of $S$\;
    \For{$\ell \in \{\height(T), \height(T)-1, \dots, 1, 0\}$}{
        \For{$x$ vertex of $S$ of level $\ell$}{
            $A$ $\gets$ maximal set of pairwise $R$-incomparable predecessors of $x$ in $T$\;
            \For{$p \in \pred(x) \setminus A$}{
                delete edge $p \to x$ from $S$\;
            }
        }
        \For{$x,y$ vertices of $S$ of level $\ell$}{
            \If{for every $p \in \pred(x)$, there exists $q \in \pred(y)$, such that $p R q$}{$R$ $\gets$ $R \cup (x,y)$ \Comment{Add $x R y$ to the relation} \;
            }
        }
    }
    \Return $S$
\end{algorithm}

\vspace{0.5cm}

\begin{algorithm}[H]
    \caption{Decompose zero-dimensional homology over $Q$}
    \label{Algorithm 2}
    \SetAlgoLined
    \DontPrintSemicolon
    \SetKwInput{Input}{Input}
    \SetKwInput{Output}{Output}
    \Input{A graph $G = (V,E)$ and a $Q$-filtration $h = (h_V : V \to Q_0, h_E : E \to Q_0)$ with $Q$ a rooted tree quiver.}
    \Output{$(S, f_S : S \to Q)$ quiver over $Q$, such that $\kbb_S \cong H_0(h) \in \rep(Q)$, and such that $\kbb_{C}$ is indecomposable for every connected component $C \subseteq S$.}
    $S$ $\gets$ empty quiver\;
    $(T, f_T : T \to Q)$ $\gets$ \cref{Algorithm 2-aux} on $(G, h)$\;
    \For{$T'$ connected component of $T$}{
        $(S', f_{S'} : S' \to Q)$ $\gets$ \cref{Algorithm 1} on $(T', f_T|_{T'})$\;
        $(S, f_S : S \to Q) \gets (S \sqcup S', f_S \sqcup f_{S'} : S \sqcup S' \to Q)$\;
    }
    \Return $S$
\end{algorithm}

\vspace{0.5cm}

\begin{algorithm}[H]
    \caption{Filtration to disjoint union of rooted tree quivers over $Q$}
    \label{Algorithm 2-aux}
    \SetAlgoLined
    \DontPrintSemicolon
    \SetKwInput{Input}{Input}
    \SetKwInput{Output}{Output}
    \Input{A graph $G = (V,E)$ and a $Q$-filtration $h = (h_V : V \to Q_0, h_E : E \to Q_0)$ with $Q$ a rooted tree quiver.}
    \Output{An object $T \to Q$ of $\rtrees_{/Q}$ such that $\kbb_T \cong H_0(h) \in \rep(Q)$.}
    $T$ $\gets$ empty quiver\;
    $f_T$ $\gets$ empty function to $Q_0$\;
    $\uf$ $\gets$ empty union find\;
    $\mathsf{max}_\ell$ $\gets$ largest level $\ell$ of $Q$ such that $G$ has vertices of level $\ell$\;
    \For{$\ell \in \{\mathsf{max}_\ell, \mathsf{max}_\ell-1, \dots, 1, 0\}$}{
        \For{$v$ vertex of $G$ of level $\ell$}{
            $\uf.\adduf(v)$\;
        }
        \For{$\{v,w\}$ edge of $G$ of level $\ell$}{
            $\uf.\union(v,w)$\;
        }
        $T_0^\ell$ $\gets$ $\uf.\representatives()$\;
        add $T_0^\ell \times \{\ell\}$ to the vertices of $T$
        \Comment{Product with $\{\ell\}$ to make it disjoint}
        \;
        \For{$v \in T_0^\ell$}{
            $f_T(v) \coloneqq f_V(v)$\;
        }
        \For{$v \in T_0^{\ell+1}$}{
            $w$ $\gets$ $\uf.\find(v)$\;
            add $\big((v,\ell+1), (w,\ell)\big)$ to the edges of $T$\;
        }
    }
    \Return $(T, f_T : T \to Q)$
\end{algorithm}
\end{figure}

In this section we describe an algorithm to decompose the linearization of a rooted tree quiver over a rooted tree quiver (\cref{Algorithm 1}), and an algorithm to decompose the zero-dimensional persistent homology of a graph filtered by a rooted tree poset (\cref{Algorithm 2}).
We prove in \cref{proposition:algo-1-proof,proposition:algo-2-proof} that these algorithms are correct, and run in quadratic time.

\begin{proof}[Proof of \cref{thm::algorithm}]
    This is a direct consequence of \cref{proposition:algo-1-proof,proposition:algo-2-proof}.
\end{proof}

\subsection{Decomposing linearized rooted tree quiver over a rooted tree quiver}

The following basic definitions are required for \cref{Algorithm 1}.

Let $(T,\tau)$ be a rooted tree quiver.
The \emph{level} $\ell(x)$ of a vertex $x \in T_0$ is the length of the (unique) directed path from $x$ to the root $\tau$.
The \emph{levels} of $T$ are then $\ell(T_0) \subseteq \Nbb$, and the \emph{height} $\height(T)$ of $T$ is the maximum among its levels.

\begin{proposition}
    \label{proposition:algo-1-proof}
    Let $Q$ be a rooted tree quiver.
    Given $T$, a rooted tree quiver over $Q$, \cref{Algorithm 1} runs in $O(|T|^2)$ time, and returns a subquiver $S \subseteq T$ such that $\kbb_S \cong \kbb_T \in \rep(Q)$, and such that $\kbb_{S'}$ is indecomposable for each connected component $S' \subseteq S$.
\end{proposition}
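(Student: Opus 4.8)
The plan is to prove correctness and the runtime bound separately, with correctness being the substantial part. First I would set up the right invariant to track through the outer loop over levels $\ell$. At the moment the algorithm finishes processing level $\ell$, I claim that (a) $\kbb_S \cong \kbb_T$ in $\rep(Q)$; (b) the relation $R$, restricted to vertices of $S$ of level $\geq \ell$, is exactly the preorder $\preceq$ on the corresponding rooted subtree quivers of $S$ (in the sense that $x\,R\,y$ iff $S_{\leq x} \preceq_{Q_{\leq f_T(x)}} S_{\leq y}$ when $f_T(x) = f_T(y)$, and $R$ relates no vertices with distinct images); and (c) for every vertex $x$ of $S$ with level $\geq \ell$, the predecessors of $x$ in $S$ are pairwise $R$-incomparable. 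Invariant (c) for level $0$ says precisely that $S$ has the property that at every vertex, the incoming subtrees are pairwise incomparable — i.e., unwinding \cref{definition:reduced}, that each connected component of $S$ is a reduced rooted tree quiver over (a downset of) $Q$, which by \cref{theorem:characterization-indecomposable} gives that $\kbb_C$ is indecomposable for each component $C \subseteq S$.

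The inductive step of maintaining the invariant is where the real work is. Going from level $\ell+1$ down to level $\ell$, the inner loops do two things. The first inner loop, for each vertex $x$ of level $\ell$, selects a maximal $R$-incomparable antichain $A$ among the predecessors of $x$ and deletes the edges from the non-selected predecessors; I would argue, using invariant (c) at level $\ell+1$ together with invariant (b), that each deleted predecessor $p$ has $S_{\leq p} \preceq S_{\leq q}$ for some retained $q \in A$, so \cref{proposition:split-off-summand} (the elder rule) applies to split off $\iota_*(\kbb_{S_{\leq p}})$; but since $p$ is itself (by induction) the root of an already-reduced component structure that coincides with $T_{\leq p}$ restricted to $S$... here I must be careful: the edge deletion detaches $S_{\leq p}$ as a new connected component, and I need $\kbb$ of the whole thing to be unchanged. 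This is exactly the content of combining \cref{proposition:split-off-summand} with \cref{lemma:merging-decomposition} and iterating, just as in the proof of \cref{lemma:inductive-step-in-decomposition}; so invariant (a) is preserved and invariant (c) now holds at level $\ell$ for vertices of level $\ell$ by construction. The second inner loop then populates $R$ among level-$\ell$ vertices using the recursive criterion from \cref{definition:order-on-trees}: $x\,R\,y$ is added iff every predecessor $p$ of $x$ in $S$ has some predecessor $q$ of $y$ with $p\,R\,q$ — which, given invariant (b) at level $\ell+1$ and the fact that predecessors of level-$\ell$ vertices have level $\ell+1$, is precisely the inductive clause defining $\preceq_{Q_{\leq f_T(x)}}$; I also need to check that we never relate vertices with different $f_T$-images, which follows because predecessor-sets of such vertices live over disjoint downsets of $Q$. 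The one subtlety to flag: the definition of $\preceq$ includes the base case "$S = \ast$", i.e. a leaf; I should check the algorithm handles leaves correctly (a leaf $x$ has $\pred(x) = \emptyset$, the "for every $p$" condition is vacuously true, so $x\,R\,y$ gets added for every $y$ with the same image — matching $\ast \preceq$ anything).

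For the runtime: there are at most $|T| = |T_0| + |T_1|$ iterations of the outer loop (at most $\height(T)+1 \leq |T_0|$ levels). The total work in the first inner loop across a fixed level is bounded by computing, for each $x$, a maximal $R$-incomparable set among its predecessors; greedily scanning predecessors and testing each against the already-chosen ones costs $O(|\pred(x)|^2)$ $R$-lookups, and $\sum_x |\pred(x)| \leq |T_1|$, so summed over one level this is $O(|T|^2)$ in the worst case, hence $O(|T|^2)$ per level if we are crude, but one observes the predecessor sets at a single level are disjoint so it is really $O(|T|^2)$ total over all levels — I would state it as $O(|T|^2)$ per level and $O(|T|)$ levels giving $O(|T|^3)$ naively, so I need the sharper accounting: since each edge of $T$ is the edge $p\to x$ for exactly one $x$, and edges are deleted, never created, the first inner loop does $O(\sum_x |\pred(x)|^2)$ work total, and $\sum |\pred(x)|^2 \leq (\sum |\pred(x)|)^2 \leq |T|^2$. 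The second inner loop, over pairs $(x,y)$ of level-$\ell$ vertices, does $O(|\pred(x)| \cdot |\pred(y)|)$ work per pair, and $\sum_{x,y \text{ level }\ell} |\pred(x)||\pred(y)| = \big(\sum_{x \text{ level }\ell}|\pred(x)|\big)^2$, and summing these squares over all levels is again $\leq |T|^2$. Storing $R$ as an explicit bit-matrix on vertices allows $O(1)$ lookups and updates; initialization is $O(|T|^2)$. Hence the total is $O(|T|^2)$, as claimed. I expect the main obstacle to be bookkeeping in the correctness argument — precisely matching the running structure of $S$ and $R$ to the recursive definitions of $\preceq_Q$ and of reducedness level-by-level, and making the repeated application of the elder rule (\cref{proposition:split-off-summand}) with \cref{lemma:merging-decomposition} rigorous when several edges are deleted at the same vertex in one pass.
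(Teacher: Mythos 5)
Your proposal is correct and follows essentially the same route as the paper: the same three-part loop invariant (preservation of $\kbb_S \cong \kbb_T$ via the elder rule \cref{proposition:split-off-summand} combined with \cref{lemma:merging-decomposition}, reducedness of the already-processed levels, and the relation $R$ agreeing with $\preceq$ on the corresponding subtrees), with indecomposability of the components concluded from \cref{theorem:characterization-indecomposable}. The runtime accounting via $\sum_x |\pred(x)|^2 \leq \left(\sum_x |\pred(x)|\right)^2 = O(|T|^2)$ and the analogous bound for the pairwise-comparison loop is also exactly the paper's argument.
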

\begin{proof}
    \noindent \emph{Correctness.}
    We consider the following invariant

    \smallskip

    \noindent\textit{Invariant.} At the end of the $\ell$th iteration of the for-loop of line 3 the following is satisfied.
    \begin{enumerate}
        \item The subquiver $S \subseteq T$ is such that $\kbb_S \cong \kbb_T \in \rep(Q)$.
        \item The restriction $S^\ell \to Q^\ell$ of $f_T|_{S}$ to levels $\ell$ and higher has the property that every connected component of $S^\ell$ is a rooted tree quiver $(C,c)$, and $C \to Q_{\leq f_T(c)}$ is a reduced rooted tree quiver over $Q_{\leq f_T(c)}$.
        \item For $x,y$ vertices of $S$ of level $\ell$ we have that $x R y$ if and only if $f_T(x) = f_T(y) \eqqcolon z$ and $(S_{\leq x}) \preceq_{(Q_{\leq z})} (S_{\leq y})$.
    \end{enumerate}

    \smallskip

    If the invariant is preserved, then the algorithm is correct by condition (2) of the invariant and the fact that linearized reduced rooted tree quivers are indecomposable (\cref{theorem:characterization-indecomposable}).

    Let us prove that the invariant is preserved.
    For this, we can actually take as base case $\ell = \height(T)+1$, where $S = T$, $S^{\ell} = \emptyset$, and all conditions are immediate to check.
    So we need to prove that, if conditions (1, 2, 3) are satisfied at the end of the $\ell+1$st iteration, then they are satisfied at the end of the $\ell$th iteration.

    We start by checking (2).
    Let $S$ be the value of the variable $S$ at the end of the $\ell+1$st iteration, and let $S'$ be the value of the variable $S$ at the end of the $\ell$th iteration.
    The vertex $x$ of $S$ at level $\ell$ (line 4) induces a subtree quiver $S_{\leq x}$.
    If $x$ has no predecessors, then this tree quiver is the trivial rooted tree quiver, and condition (1) is met at the end of the iteration.
    Otherwise, $S_{\leq x} \cong \Gcal(S_1^\bullet, \dots, S_k^\bullet)$, with $S_i^\bullet = \{S_i^1, \dots, S_i^{\ell_i}\}$ rooted tree quivers over $Q_i$, where $Q_{\leq f(x)} \cong \Gcal(Q_1, \dots, Q_k)$.
    What line 5 does (by condition (3) in the inductive hypothesis) is, for each $1 \leq i \leq k$, choose a maximal set $A_i^\bullet$ of pairwise $\preceq_{Q_i}$-incomparable elements of $S_i^\bullet$.
    This guarantees that $A = \Gcal(A_1^\bullet, \dots, A_k^\bullet)$ is a reduced rooted tree quiver over $Q_{\leq f(x)}$ (by condition (2) in the inductive hypothesis), which implies that condition (2) is met at the end of the $\ell$th iteration.

    We now check condition (1).
    From the elder rule (\cref{proposition:split-off-summand}), it follows that $\kbb_S$ is isomorphic to $\kbb_A$ direct sum the linearization of all trees that did not make it into $A$, which, by construction, is isomorphic to $\kbb_{S'}$:
    \[
        \kbb_S \cong \kbb_A \oplus \left(\bigoplus_{1 \leq i \leq k} \; \bigoplus_{V \in S_i^\bullet \setminus A_i^\bullet} \; \kbb_{V}\right) \cong \kbb_{S'}.
    \]
    This implies that condition (1) is met at the end of the $\ell$th iteration.

    To conclude this part of the proof, we check condition (3).
    Line 9 is checking precisely the condition defining the preorder $\preceq$ (\cref{definition:order-on-quiver}), so condition (3) is met at the end of the $\ell$th iteration thanks to condition (3) in the inductive hypothesis.

    \smallskip

    \noindent \emph{Complexity.}
    Here, whenever $x \in T_0 = S_0$ and we write $\pred(x)$, we mean predecessors in $T$.
    Since $S \subseteq T$, the predecessors in $T$ contain the predecessors in $S$.

    Line 5 can be implemented by simply performing all pairwise comparisons, which takes $O\left(|\pred(x)|^2\right)$ time.
    Lines 6 and 7 take $O\left(|\pred(x)|\right)$ time.
    Thus, the contribution of lines 5, 6, and 7 across all iterations is $O\left(\sum_{x \in T_0} |\pred(x)|^2\right)$ time.
    The contribution of lines 8, 9, and 10 across all iterations is
    \[
        O\left(
            \sum_{\ell = 0}^{\height(T)} \sum_{x,y \text{ of level $\ell$}} |\pred(x)| \cdot |\pred(y)|
        \right).
    \]

    It follows that the time complexity is in $O\left(\left(\sum_{x \in T_0} |\pred(x)|\right)^2 \right) = O(|T|^2)$, since each non-root vertex of $T$ is the predecessor of exactly one vertex.
\end{proof}

\subsection{Decomposing zero-dimensional homology over rooted tree quiver}
For clarity, we abstract a subroutine from \cref{Algorithm 2}, given as \cref{Algorithm 2-aux}.
The following basic definitions are required \cref{Algorithm 2,Algorithm 2-aux}.

Let $G = (V,E)$ be a finite, simple, undirected graph, so that $V$ is a finite set of \emph{vertices}, and $E$ is a set of subsets of $V$, each one of cardinality $2$, called \emph{edges}.
Let $Q$ be a rooted tree quiver.
A \emph{$Q$-filtration} $h$ of $G$ consists of a pair of functions $(h_V : V \to Q_0, h_E : E \to Q_0)$, such that $h_V(x), h_V(y) \leq_{Q} h_E((x,y))$ for every $\{x,y\} \in E$.
We get a functor $(G,h) : Q \to \top$ by mapping $x \in Q_0$ to the (geometric realization of the) subgraph of $G$ spanned by vertices and edges whose $h$-value $y \in Q_0$ satisfies $y \leq_Q x$.

If $Q$ is a rooted tree quiver and $(G,h)$ is a $Q$-filtration, the \emph{level} of a vertex $v$ (respectively edge $e$) of $G$ is the level of $h_V(v) \in Q_0$ (respectively $h_E(v) \in Q_0$).

In \cref{Algorithm 2-aux}, we make use of the union-find data structure (also known as a disjoint set data structure), see, e.g., \cite[Chapter~2]{tarjan} for details.

\begin{proposition}
    \label{proposition:algo-2-proof}
    Let $Q$ be a rooted tree quiver.
    Given $(G,f)$ a $Q$-filtered graph, \cref{Algorithm 2} runs in $O(|G|^2)$ time, and returns $(S, f_S : S \to Q)$ a quiver over $Q$, such that $\kbb_S \cong H_0(h) \in \rep(Q)$, and such that $\kbb_{C}$ is indecomposable for every connected component $C \subseteq S$.
\end{proposition}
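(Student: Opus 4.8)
The plan is to prove the statement by combining the correctness and complexity of \cref{Algorithm 2-aux} with the already-established \cref{proposition:algo-1-proof}. First I would analyze \cref{Algorithm 2-aux}. Its correctness rests on the classical observation (essentially \cref{lemma:homology-is-pi0} together with \cref{lemma:disc-pi0-is-identity} and \cref{Theorem A}(1)) that the zero-dimensional persistent homology $H_0(G,h)$ is the free-vector-space-post-composition of the functor $\pi_0(G,h) : Q \to \set$ sending $x \in Q_0$ to the set of connected components of the subgraph of $G$ spanned by vertices and edges of $h$-value $\leq_Q x$. I would establish the following invariant for the for-loop on levels: at the end of the iteration for level $\ell$, the vertices of $T$ that have been created, together with the edges created so far, record, for each level $\ell' \geq \ell$, a chosen set of representatives $T_0^{\ell'}$ of the components of the subgraph at level $\ell'$, and the edges of $T$ faithfully encode the maps $\pi_0(\phi)$ induced by the covering relations of $Q$; moreover $f_T$ sends a representative $v$ to $f_V(v) \in Q_0$. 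A union-find processed in order of decreasing level computes exactly these component sets and the inclusion-induced maps, because adding the vertices and edges of level $\ell$ to a union-find that already contains all higher levels merges precisely the components that get identified when passing from level $\ell+1$ to level $\ell$; calling $\uf.\find(v)$ on a representative $v$ of a level-$(\ell+1)$ component returns the representative of the level-$\ell$ component containing it, which is precisely the target of the edge that should be added. It follows that $(T, f_T)$ is an object of $\rtrees_{/Q}$ (each component is a rooted tree since every non-root representative gets exactly one outgoing edge, to its successor component) and that $\fib_T \cong \pi_0(G,h)$, hence $\kbb_T \cong H_0(h)$ by \cref{lemma:linearization-is-free} and \cref{Theorem A}(1).

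Once \cref{Algorithm 2-aux} is shown correct, the correctness of \cref{Algorithm 2} is immediate: it decomposes $T$ into its connected components $T'$ (so $\kbb_T \cong \bigoplus_{T'} \kbb_{T'}$ in $\rep(Q)$, since distinct components share only the convention that they land in $Q$, and the push-forward of a disjoint union is a direct sum), runs \cref{Algorithm 1} on each $(T', f_T|_{T'})$, and collects the outputs. By \cref{proposition:algo-1-proof}, each call returns $S' \subseteq T'$ with $\kbb_{S'} \cong \kbb_{T'}$ and with $\kbb_C$ indecomposable for every connected component $C \subseteq S'$; taking the disjoint union $S = \bigsqcup S'$ therefore yields $\kbb_S \cong \bigoplus_{T'} \kbb_{S'} \cong \bigoplus_{T'}\kbb_{T'} \cong \kbb_T \cong H_0(h)$, and the connected components of $S$ are exactly the connected components of the various $S'$, each of which linearizes to an indecomposable. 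This proves all the claimed properties of the output.

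For the complexity, I would first bound the running time of \cref{Algorithm 2-aux}. The number of vertices created is at most $|V|$ (each representative corresponds to at least one vertex of $G$) and the number of edges created is at most the number of representatives, hence $O(|V|)$; the union-find operations are $O(|V| + |E|)$ times the inverse-Ackermann factor, which is $O(|G|)$ up to the usual almost-constant factor; so $|T| = O(|G|)$ and \cref{Algorithm 2-aux} runs in (nearly) linear time. Then \cref{Algorithm 2} calls \cref{Algorithm 1} on the components $T'$, whose sizes sum to $|T| = O(|G|)$; by \cref{proposition:algo-1-proof} each call costs $O(|T'|^2)$, so the total is $O\!\left(\sum_{T'} |T'|^2\right) = O\!\left(\left(\sum_{T'} |T'|\right)^2\right) = O(|T|^2) = O(|G|^2)$, as claimed.

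The main obstacle I anticipate is not any single hard argument but the careful bookkeeping in the correctness of \cref{Algorithm 2-aux}: one must check that the level-by-level union-find really computes the functor $\pi_0(G,h)$ on the path category of $Q$ and not just its values on objects — in particular that the edges added via $\uf.\find$ correctly encode the transition maps, and that the resulting $T$ lies in $\rtrees_{/Q}$ (i.e., each component is genuinely a rooted tree mapping root-to-root into $Q$). This requires noting that $Q$ being a rooted tree forces each non-root vertex of $G$ (after collapsing components level by level) to have a unique successor component, so the invariant is well-posed; and it requires the hypothesis in the definition of a $Q$-filtration, $h_V(x), h_V(y) \leq_Q h_E(\{x,y\})$, which guarantees that an edge of level $\ell$ only ever merges components that are present at level $\ell$.
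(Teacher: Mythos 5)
Your proposal is correct and follows essentially the same route as the paper: reduce everything to \cref{Algorithm 2-aux} via \cref{proposition:algo-1-proof}, show its output $T$ satisfies $\fib_T \cong \pi_0(G,h)$ so that \cref{lemma:linearization-is-free} gives $\kbb_T \cong H_0(h)$, and bound the cost of the union-find pass by $O(|G|)$ so the quadratic cost comes from \cref{Algorithm 1}. The only difference is that you spell out the level-by-level union-find invariant in detail where the paper declares it clear; that extra care is fine and arguably an improvement.
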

\begin{proof}
    Thanks to \cref{proposition:algo-1-proof}, it is clear that \cref{Algorithm 2} is correct as long as \cref{Algorithm 2-aux} is, and that its time complexity is in $O(|G|^2)$ as long as the time complexity of \cref{Algorithm 2-aux} is in $O(|G|^2)$.
    So let us only worry about \cref{Algorithm 2-aux}.

    The fact that the time complexity of \cref{Algorithm 2-aux} is in $O(|G|^2)$ is clear, since it iterates a constant number of times over each vertex and edge of $G$, and each time it performs $O(1)$ operations, or operations with the union find, all of which are in $O(|G|)$.

    We conclude this proof with the correctness proof for \cref{Algorithm 2-aux}.
    By \cref{lemma:linearization-is-free}, it is enough to prove that $\fib_T \cong \pi_0(G,h) : Q \to \set$.
    This is clear, since all the algorithm does is to keep track of the connected components in the filtration $(G,h)$, and of where this connected components map under the arrows of $Q$.
\end{proof}

\section{Building representations and filtrations over rooted tree posets}
\label{section:TDA-applications}

We now describe two ways in which to build rooted tree modules over rooted tree quivers that are relevant to topological data analysis.

\subsection{An invariant of morphisms between merge trees}
\label{example:pdfs}
To every morphisms between merge trees, we associate a representation of the codomain, which can be effectively decomposed into indecomposables using the results in this paper.
This construction could be useful in studying morphisms between merge trees.

\begin{figure}
    \includegraphics[width=\linewidth]{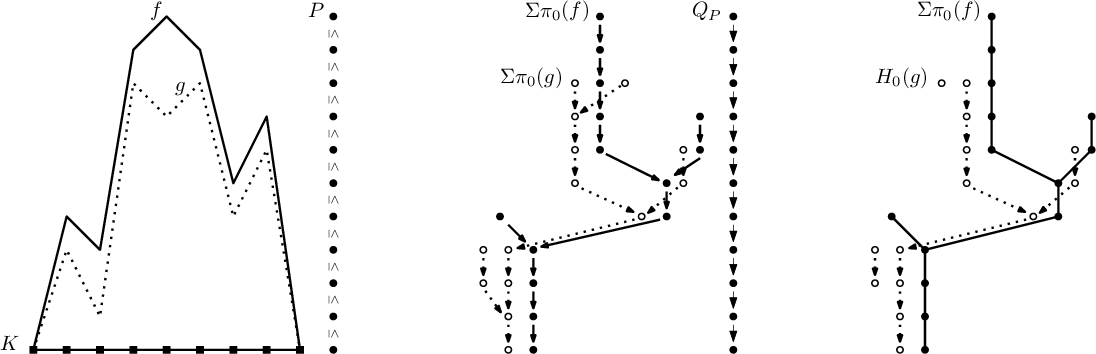}
    \caption{%
        \emph{Left.} Two filtrations $f,g : K \to P$ of a simplicial complex $K$ (with vertices depicted as squares) by a linear ordered set $P$ such that $f \leq g$.
        \emph{Center.} The connected components of the filtrations $f$ and $g$ as quivers over $Q_P$.
        \emph{Right.} The decomposition of the homology $H_0(g)$ as a representation of the rooted tree quiver $\Sigma \pi_0(f)$ given by the connected components of $f$.
        See \cref{example:pdfs} for details.}
    \label{figure:pdfs}
\end{figure}

\medskip \noindent \textbf{Merge trees from filtrations.}
Let $P = \{1 < \cdots < n\}$ be a finite linearly ordered set.
Given a filtration $K_1 \subseteq \cdots \subseteq K_n$ (or a functor $K : P \to \top$), its connected components $\pi_0(K) : P \to \set$ can be visualized as a tree, known as a \emph{merge tree}, which is a fundamental object in persistence theory and in hierarchical clustering; see, e.g., \cite{curry,curry-2,rolle-scoccola} and references therein.
There are several ways of defining merge trees; the following is a natural definition using the language of this paper.

\begin{definition}
    A \emph{merge tree} is a rooted tree quiver $(T, f_T : T \to A_n)$, where $A_n = 1 \rightarrow \cdots \rightarrow n$ is a linear rooted tree quiver.
\end{definition}

We say that a functor $X : A_n \to \top$ is \emph{connected} if $X(n) \in \top$ is path-connected.
If $X : A_n \to \top$ is connected, the \emph{merge tree of} $X$ is $\left(\Sigma \pi_0(X), f_{\Sigma \pi_0(X)} : \Sigma \pi_0(X) \to A_n\right)$, obtained by applying the equivalence of categories
$\Sigma : \set^{A_n} \to \rtrees_{/A_n}$ (\cref{proposition:set-representation-is-tree-over}) to 
the connected components $\pi_0(X) : A_n \to \set$ of $X$.
See \cref{figure:pdfs} for examples.

\medskip \noindent \textbf{Morphisms of merge trees.}
A natural transformation $X \Rightarrow Y$ of (connected) functors $X,Y : A_n \to \top$ induces a natural transformation $\pi_0(X) \Rightarrow \pi_0(Y)$, and thus a morphism of $\Sigma \pi_0(X) \to \Sigma \pi_0(Y)$ of merge trees (note that connectedness of $X$ and $Y$ is not strong assumption since one can work component-by-component, in the sense that $X$ and $Y$ decompose as a disjoint union of connected functors, and each component of $X$ maps to a single component of $Y$).
This situation occurs naturally, as shown next.

\begin{example}
    \label{example:natural-transformations-tda}
\begin{enumerate}
    \item Let $Z : A_n \times \{1,2\} \to \top$ be a filtration over a commutative ladder, in the sense of \cite{escolar-hiraoka}.
    Then, by letting $X \coloneqq Z|_{A_n \times \{1\}}$ and $Y \coloneqq Z|_{A_n \times \{2\}}$ we get a natural transformation $X \Rightarrow Y$.
    \item Let $f,g : K \to A_n$ be two filtrations of the same simplicial complex $K$ (i.e., functions mapping simplices of $K$ to vertices of $A_n$, and respecting the face relation), and assume that $f \leq g$.
    Then, by letting $X \coloneqq \Scal g$ and $Y \coloneqq \Scal f$, we have a natural transformation $X \Rightarrow Y$, since we have an inclusion of sublevel-set filtrations $(\Scal g)(r) \subseteq (\Scal f)(r)$ for every $r \in A_n$ (recall that $(\Scal f)(r) = \{\eta \in K : f(\eta) \leq r\}$).
\end{enumerate}
\end{example}

\medskip \noindent \textbf{The representation associated to a morphism of merge trees.}
The data of a morphism of merge trees $S \to T$ in $\rtree_{/A_n}$ is simply that of a rooted tree morphism $S \to T$.
Thus, we can consider the representation $\kbb_S \in \rep(T)$.

In the setup where we start with a natural transformation $X \Rightarrow Y$ between connected functors $X,Y : A_n \to \top$, we get a representation $\kbb_{\Sigma \pi_0(X)} \in \rep(\Sigma \pi_0(Y))$, which can be effectively decomposed into indecomposables thanks to \cref{thm::algorithm}.
See \cref{figure:pdfs} for an example using the construction of \cref{example:natural-transformations-tda}(2).

\subsection{Restriction}
Let $(R,\leq_R)$ be a poset.
A \emph{restriction} of $(R,\leq_R)$ is a poset $(P,\leq_P)$ such that $P \subseteq R$, and such that $p \leq_P q$ implies $p \leq_R q$ for every $p,q\in P$.
If $P$ is a restriction of~$R$, there is a natural restriction functor $\top^R \to \top^P$ (see, e.g., \cite[Section~4.3]{botnan-lesnick-2} and~\cite{amiot-brustle-hanson}).
So in order to study the zero-dimensional persistent homology of a filtration by $R$, one could consider its restriction to various $P \subseteq R$, with $P$ a rooted tree poset.

In \cref{figure:restrictions-of-grid} we give simple examples of restriction of a two-dimensional grid poset, a common type of poset in multiparameter persistence \cite{escolar-hiraoka,lesnick-wright}.

\begin{figure}
    \includegraphics[width=0.8\linewidth]{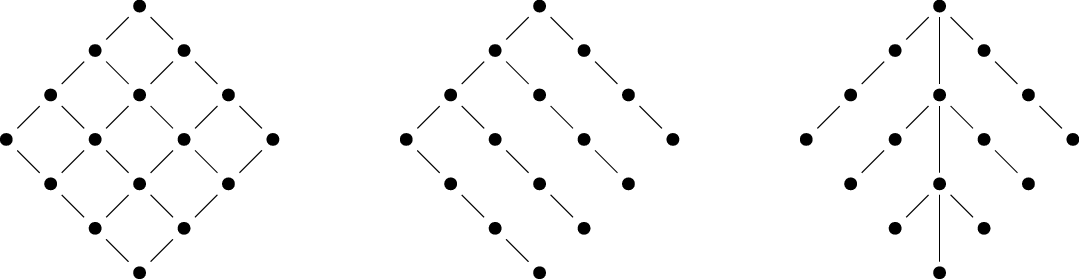}
    \caption{%
        \emph{Left.} The Hasse diagram of a two-dimensional grid poset $R$ (i.e., a product of two linear orders).
        \emph{Center and right.} Restrictions of $R$ that are rooted tree posets.
    }
    \label{figure:restrictions-of-grid}
\end{figure}

\bigskip
\subsection*{Acknowledgements}
TB was supported by Bishop's University, Universit\'e de Sherbrooke and NSERC Discovery Grant RGPIN/04465-2019.
RB was supported by a Mitacs Globalink research award.
A portion of this work was completed while RB was at Universit\'e de Sherbrooke.
LS was supported by a Centre de Recherches Mathématiques et Institut des Sciences Mathématiques fellowship.

\bibliographystyle{plain}
\bibliography{references}

\begin{thebibliography}{10}

\bibitem{amiot-brustle-hanson}
Claire Amiot, Thomas Brüstle, and Eric~J. Hanson.
\newblock Invariants of persistence modules defined by order-embeddings, 2024.

\bibitem{ASS}
Ibrahim Assem, Daniel Simson, and Andrzej Skowro\'nski.
\newblock {\em Elements of the representation theory of associative algebras. {V}ol. 1}, volume~65 of {\em London Mathematical Society Student Texts}.
\newblock Cambridge University Press, Cambridge, 2006.
\newblock Techniques of representation theory.

\bibitem{bauer-et-al}
Ulrich Bauer, Magnus~B. Botnan, Steffen Oppermann, and Johan Steen.
\newblock Cotorsion torsion triples and the representation theory of filtered hierarchical clustering.
\newblock {\em Advances in Mathematics}, 369:107171, 2020.

\bibitem{botnan-lesnick-2}
Magnus~Bakke Botnan and Michael Lesnick.
\newblock An introduction to multiparameter persistence.
\newblock In {\em Representations of algebras and related structures}, EMS Ser. Congr. Rep., pages 77--150. EMS Press, Berlin, 2023.

\bibitem{BBP}
Jacek Brodzki, Matthew Burfitt, and Mariam Pirashvili.
\newblock On the complexity of zero-dimensional multiparameter persistence, 2020.

\bibitem{buhovsky}
Lev Buhovsky, Jordan Payette, Iosif Polterovich, Leonid Polterovich, Egor Shelukhin, and Vukašin Stojisavljević.
\newblock Coarse nodal count and topological persistence.
\newblock {\em J. Eur. Math. Soc.}, 2024.

\bibitem{cai-kim-memoli-wang}
Chen Cai, Woojin Kim, Facundo M\'emoli, and Yusu Wang.
\newblock Elder-rule-staircodes for augmented metric spaces.
\newblock {\em SIAM J. Appl. Algebra Geom.}, 5(3):417--454, 2021.

\bibitem{chazal-guibas-oudot-skraba}
Fr\'ed\'eric Chazal, Leonidas~J. Guibas, Steve~Y. Oudot, and Primoz Skraba.
\newblock Persistence-based clustering in {R}iemannian manifolds.
\newblock {\em J. ACM}, 60(6):Art. 41, 38, 2013.

\bibitem{chazal-michel}
Frédéric Chazal and Bertrand Michel.
\newblock An introduction to topological data analysis: Fundamental and practical aspects for data scientists.
\newblock {\em Frontiers in Artificial Intelligence}, 4, 2021.

\bibitem{curry}
Justin Curry.
\newblock The fiber of the persistence map for functions on the interval.
\newblock {\em J. Appl. Comput. Topol.}, 2(3-4):301--321, 2018.

\bibitem{curry-2}
Justin Curry, Jordan DeSha, Ad\'elie Garin, Kathryn Hess, Lida Kanari, and Brendan Mallery.
\newblock From trees to barcodes and back again {II}: {C}ombinatorial and probabilistic aspects of a topological inverse problem.
\newblock {\em Comput. Geom.}, 116:Paper No. 102031, 28, 2024.

\bibitem{DW}
Harm Derksen and Jerzy Weyman.
\newblock {\em An introduction to quiver representations}, volume 184 of {\em Graduate Studies in Mathematics}.
\newblock American Mathematical Society, Providence, RI, 2017.

\bibitem{edelsbrunner-harer}
Herbert Edelsbrunner and John~L. Harer.
\newblock {\em Computational topology}.
\newblock American Mathematical Society, Providence, RI, 2010.
\newblock An introduction.

\bibitem{escolar-hiraoka}
Emerson~G. Escolar and Yasuaki Hiraoka.
\newblock Persistence modules on commutative ladders of finite type.
\newblock {\em Discrete Comput. Geom.}, 55(1):100--157, 2016.

\bibitem{ghrist}
Robert Ghrist.
\newblock Barcodes: the persistent topology of data.
\newblock {\em Bull. Amer. Math. Soc. (N.S.)}, 45(1):61--75, 2008.

\bibitem{hensel-moor-rieck}
Felix Hensel, Michael Moor, and Bastian Rieck.
\newblock A survey of topological machine learning methods.
\newblock {\em Frontiers in Artificial Intelligence}, 4, 2021.

\bibitem{KM}
Valentin Katter and Nils Mahrt.
\newblock Reduced representations of rooted trees.
\newblock {\em J. Algebra}, 413:41--49, 2014.

\bibitem{K10}
Ryan Kinser.
\newblock Rank functions on rooted tree quivers.
\newblock {\em Duke Math. J.}, 152(1):27--92, 2010.

\bibitem{lesnick-wright}
Michael Lesnick and Matthew Wright.
\newblock Computing minimal presentations and bigraded {B}etti numbers of 2-parameter persistent homology.
\newblock {\em SIAM J. Appl. Algebra Geom.}, 6(2):267--298, 2022.

\bibitem{loupias}
Mich\`ele Loupias.
\newblock Indecomposable representations of finite ordered sets.
\newblock In {\em Representations of algebras ({P}roc. {I}nternat. {C}onf., {C}arleton {U}niv., {O}ttawa, {O}nt., 1974),}, Lecture Notes in Math., Vol. 488,, pages 201--209. ,, 1975.

\bibitem{oudot}
Steve~Y. Oudot.
\newblock {\em Persistence theory: from quiver representations to data analysis}, volume 209 of {\em Mathematical Surveys and Monographs}.
\newblock American Mathematical Society, Providence, RI, 2015.

\bibitem{polterovich}
Leonid Polterovich, Daniel Rosen, Karina Samvelyan, and Jun Zhang.
\newblock {\em Topological persistence in geometry and analysis}, volume~74 of {\em University Lecture Series}.
\newblock American Mathematical Society, Providence, RI, 2020.

\bibitem{Ri}
Claus~Michael Ringel.
\newblock Exceptional modules are tree modules.
\newblock In {\em Proceedings of the {S}ixth {C}onference of the {I}nternational {L}inear {A}lgebra {S}ociety ({C}hemnitz, 1996)}, volume 275/276, pages 471--493, 1998.

\bibitem{ringel-2}
Claus~Michael Ringel.
\newblock Distinguished bases of exceptional modules.
\newblock In {\em Algebras, quivers and representations}, volume~8 of {\em Abel Symp.}, pages 253--274. Springer, Heidelberg, 2013.

\bibitem{rolle-scoccola}
Alexander Rolle and Luis Scoccola.
\newblock Stable and consistent density-based clustering via multiparameter persistence.
\newblock {\em Journal of Machine Learning Research}, 25(258):1--74, 2024.

\bibitem{shelukhin}
Egor Shelukhin.
\newblock On the {H}ofer-{Z}ehnder conjecture.
\newblock {\em Ann. of Math. (2)}, 195(3):775--839, 2022.

\bibitem{tarjan}
Robert~Endre Tarjan.
\newblock {\em Data structures and network algorithms}, volume~44 of {\em CBMS-NSF Regional Conference Series in Applied Mathematics}.
\newblock Society for Industrial and Applied Mathematics (SIAM), Philadelphia, PA, 1983.

\end{thebibliography}

\end{document}